\newtheorem{Condition}{Condition}[section]
\providecommand{\U}[1]{\protect\rule{.1in}{.1in}}
\providecommand{\U}[1]{\protect\rule{.1in}{.1in}}
\DeclareMathOperator{\argmin}{argmin}
\newcommand{\realset}[1]{\mathbb{R}^{#1}}
\DeclareMathOperator{\Var}{Var}
\newcommand{\asympConMat}{M_n}
\newcommand{\asympConMatapprox}{M_{n, N}}
\newcommand{\asympConMatapproxNn}{M_{n, N_n}}
\newcommand{\thetaML}{\widehat{\theta}_n}
\newcommand{\thetaMLapprox}{\widehat{\theta}_{n, N}}
\newcommand{\thetaMLapproxNn}{\widehat{\theta}_{n, N_n}}
\newcommand{\functionSpace}{\mathcal{H}}
\newcommand{\negloglikeFun}{\mathcal{L}}
\newcommand{\cov}{\operatorname{cov}}
\newcommand{\tr}{\operatorname{tr}}
\newcommand{\expect}{\mathbb{E}}
\renewcommand{\d}[1]{\textrm{d}{#1}}
\newtheorem{theorem}{Theorem}[section]
\newtheorem{corollary}[theorem]{Corollary}
\newtheorem{example}[theorem]{Example}
\newtheorem{lemma}[theorem]{Lemma}
\newenvironment{proof}[1][Proof]{\textbf{#1.} }{\ \rule{0.5em}{0.5em}\newline}
\def\build#1_#2^#3{\mathrel{\mathop{\kern 0pt#1}\limits_{#2}^{#3}}}
\newcommand{\converge}[3]{\build\hbox to
	15mm{\rightarrowfill}_{#1\rightarrow #2}^{\hbox{\scriptsize #3}}}
\newcommand{\convergedef}[1]{\overset{{\hbox{\scriptsize #1}}}{\hbox to 15mm{\rightarrowfill}}}
\begin{document}
\title{Asymptotic analysis for covariance parameter estimation of Gaussian processes with functional inputs}
\author{{Lucas Reding}\thanks{Corresponding author}, {Andrés Felipe L\'opez-Lopera} and François Bachoc}
\date{\vspace{-3em}}
\maketitle

Univ. Polytechnique Hauts-de-France, INSA Hauts-de-France, CERAMATHS, F - 59313 Valenciennes, France

Email: lucas.reding@uphf.fr; andres.lopezlopera@uphf.fr

Institut de Math\'ematiques de Toulouse, Universit\'e Paul Sabatier, F-31062 Toulouse, France.

Email: francois.bachoc@math.univ-toulouse.fr

\begin{abstract}
    We consider covariance parameter estimation for Gaussian processes with functional inputs. From an increasing-domain asymptotics perspective, we prove the asymptotic consistency and normality of the maximum likelihood estimator. We extend these theoretical guarantees to encompass scenarios accounting for approximation errors in the inputs, which allows robustness of practical implementations relying on conventional sampling methods or projections onto a functional basis. Loosely speaking, both consistency and normality hold when the approximation error becomes negligible, a condition that is often achieved as the number of samples or basis functions becomes large. These later asymptotic properties are illustrated through analytical examples, including one that covers the case of non-randomly perturbed grids, as well as several numerical illustrations.
\end{abstract}

Key words: parametric statistics, increasing-domain asymptotics, functional data, Gaussian processes, maximum likelihood estimator, asymptotic consistency, asymptotic normality.
\\
Mathematics Subject Classification (2020): 60G15, 62F12, 62R10.

\section{Introduction}
\label{sec:intro}

Gaussian processes (GPs) represent a widely used class of stochastic processes in geostatistics~\cite{stein1999interpolation}, machine learning~\cite{Hemachandran2022bayesianGPML,Rasmussen2005GP}, computer experiments~\cite{Bachoc2017GPCompExp,Binois2022SurveyGPsBO}, and engineering~\cite{Camps2016SurveyGPsDataAnalysis,Forrester2008SurrogatesEng}. A GP forms a non-parametric probabilistic framework wherein a prior belief, i.e. a distribution, is placed over a function that is either unknown or costly to evaluate. More precisely, the latter function is modeled as a realization of the (conditional) GP that interpolates some observation points. For instance, in computer experiments, if the latent function represents a computer code, then the observation points correspond to evaluations of this code~\cite{Bachoc2017GPCompExp}. In regression tasks, the predictive GP mean serves as an optimal linear unbiased predictor, and the predictive GP variance quantifies the expected square error associated with this predictor~\cite{Rasmussen2005GP,stein1999interpolation}.
\\
\\
The covariance function of the GP, also known as the kernel, plays an important role in defining correlations between values of the process at different input points. For instance, in geostatistics, this function seeks to quantify the similarity between data points, indicating that nearby spatial inputs are more likely to yield similar observation values. It is commonly assumed that the covariance function belongs to a parametric family of symmetric and positive semidefinite functions. This family is often defined based on prior beliefs (e.g. regularity, stationarity, periodicity) of the behavior of the function that is being modeled~\cite{Genton2001Kernels, Paciorek2004NonStCov}. Therefore, the choice of a covariance function boils down to the estimation of the associated parameters of the parametric family, namely the covariance parameters. The two main approaches for covariance parameter estimation are based on maximum likelihood (ML) or cross-validation~\cite{Bachoc2013_GPEstimators, Bachoc2021AsymptAnalMLEGPs,stein1999interpolation}, with the former being the most common one in both theoretical and practical studies. 
\\
\\
Asymptotic guarantees, such as the consistency and normality, of ML estimators for GPs indexed by vectors in $\mathbb{R}^d$, with $d \geq 1$, have been further investigated in~\cite{Bachoc2014AsymptAnalGPs,furrer2023asymptotic,ShaKau2013ParamEstim, stein1999interpolation, Zhang04Inconsistent}. These guarantees are primarily explored within two frameworks: increasing-domain asymptotics and fixed-domain asymptotics. Within increasing-domain asymptotics, the smallest distance between two inputs is lower bounded. In contrast, fixed-domain asymptotics involves a sequence (or a triangular array) of data points that becomes densely distributed in a fixed, bounded subset of $\mathbb{R}^d$. While results from increasing-domain asymptotics are broadly applicable to various covariance functions, results from fixed-domain asymptotics require specific derivation on a case-by-case basis. This implies that for fixed-domain asymptotics, the analysis must be customized to the characteristics of each covariance function individually, limiting the generalization of the results across a broader range of functions.
\\
\\
Recent works have extended the implementation of GPs to non-Euclidean inputs, such as distributions~\cite{Bachoc2023GPDist_OptTransport,Bachoc2020GaussFieldSymmGroup,Bachoc2018_GPdistInputs,Bachoc2020GPs_multidimDist,Szabo2016DistRegression}, functions~\cite{Betancourt2020fGPs,Lopezlopera2021multioutputGP,Muehlenstaedt2017CompExpFunInputs}, categorical or integer-valued variables~\cite{Garrido2020GP_catintInputs,Roustant2020_GPsCatInputs}, and graphs~\cite{Borovitskiy2021GPGraphs,Zhi2023GPGraph_SpectKernel}. There, adapted similarity distances have been considered to construct valid covariance functions. For instance, for distribution and functional inputs, similarity distances based on the Wasserstein distance~\cite{Bachoc2018_GPdistInputs} and the $L_2$-norm~\cite{Lopezlopera2021multioutputGP,Muehlenstaedt2017CompExpFunInputs} (respectively) have been considered. The references~\cite{Bachoc2023GPDist_OptTransport,Bachoc2020GPs_multidimDist} address multidimensional distributions via optimal transport and Hilbertian embedding. In particular,~\cite{Bachoc2020GPs_multidimDist} uses the Wasserstein barycenters to define general covariance functions using optimal transportation maps. Despite significant numerical progress in these models, establishing theoretical guarantees certifying their robustness remains challenging. Among the few available theoretical guarantees, for the case of 1D distribution inputs, the consistency and normality of the ML estimator under increasing-domain asymptotics have been showed in~\cite{Bachoc2018_GPdistInputs}.
\\
\\
Building on the promising numerical outcomes demonstrated by GPs with functional inputs for tasks such as surrogate modeling~\cite{Betancourt2020fGPs, Lopezlopera2021multioutputGP,Muehlenstaedt2017CompExpFunInputs}, sampling and sensitivity analysis~\cite{Nanty2016SimuFunStochInputs}, our focus in this paper is to establish asymptotic guarantees for this class of models. More precisely, we focus on the consistency and normality of the ML estimator from an increasing-domain asymptotic perspective. In a general context, our analysis considers stationary covariance functions on some Hilbert space. We first prove both the asymptotic consistency and normality of the ML estimator when the functional form of the inputs is assumed to be fully known. However, since this latter assumption may not be realistic in numerical implementations due to discretization or the availability of only partial observations~\cite{Ramsay2005functional}, we also extend the previous asymptotic results to scenarios where errors (e.g. resulting from sampling or linear projections) are considered. These additional guarantees enable the certification of the robustness of numerical implementations of the GP model. Loosely speaking, both the consistency and normality of the ML estimator hold when the approximation error becomes negligible, a condition that is often achieved as the number of samples or basis functions tends to be infinite. 
\\
\\
Throughout this paper, we provide theoretical and numerical examples to illustrate the proposed conditions and theorems. More precisely, we examine an example that covers the case of nonrandomly perturbed grids, showcasing that the sufficient conditions outlined in this work can be easily met. Additionally, we illustrate the resulting theorems with various analytical and numerical examples. The Python codes related to the numerical implementations are available in the Github repository: \url{https://github.com/anfelopera/fGPs}. 
\\
\\
This paper is structured as follows. Sections~\ref{sec:generalities+GPs} recalls essential definitions and properties related to GPs with functional inputs, as well as the ML estimator. In Sections~\ref{sec:conditions} and~\ref{sec:asympProperties}, the focus shifts to introducing the necessary conditions, lemmas and theorems establishing the asymptotic consistency and normality of the ML estimator. There, we neglect the approximation errors over the functional inputs. Section~\ref{sec:asympPropertiesApprox} extends the previous asymptotic results to account for the approximation errors. Section~\ref{sec:examplesAsympPropertiesApprox} provides analytical examples and numerical illustrations. Finally, Section~\ref{sec:conclusions} summarizes the conclusions drawn from our analysis and outlines potential avenues for future research.

Variations of the outlined conditions for isotropic covariance functions, and the proofs of the lemmas are provided in Appendices~\ref{app:CondIsotropy},~\ref{app:proofLemmas} and~\ref{app:proofLemmasApprox}.

\section{Gaussian processes with functional inputs}
\label{sec:generalities+GPs}

\subsection{Generalities}
	\label{sec:generalities+GPs:subsec:generalities}
    Before introducing the GP framework that will be considered in this paper, we first discuss some generalities related to the functional context. Throughout the following sections, we consider a separable Hilbert space $\functionSpace$, equipped with an inner product denoted $\langle \cdot, \cdot \rangle$. Here, $\functionSpace$ does not need to be real nor of finite dimension. We define the norm derived from the inner product as $\|\cdot\|$. In other words, for any $f \in \functionSpace$, we have
	\begin{equation*}
		\|f\| = \sqrt{\langle f, f \rangle} < \infty.
    \end{equation*}
    We denote, for any $k \ge 1$, $\langle \cdot, \cdot \rangle_{\mathbb{R}^k}$, the standard Euclidean inner product on $\mathbb{R}^k$, and $L^2_k([0,1])$ the space of $\mathbb{R}^k$-valued, square integrable functions defined on $[0,1]$. Note that if $k = 1$, we will simply write $L^2_1([0,1]) := L^2([0,1])$. In the following, although final applications in this paper consider $\functionSpace = L^2([0,1])$, the asymptotic analysis is valid for any Hilbert space $\functionSpace$. Next, we provide some examples of Hilbert spaces together with some corresponding inner products.
    
    \noindent
    \underline{Euclidean:} $\functionSpace = \mathbb{R}^d$
     with $d \ge 1$ and endowed with the standard Euclidean inner product. This corresponds to standard GPs indexed by vectors \cite{Rasmussen2005GP}.
    
    \noindent 
    \underline{Univariate functional with multidimensional parameter:}
    $\functionSpace = \mathbb{R}^d \times L^2([0,1])$\\ with $d \ge 1$ and endowed with the inner product
     \[
       \forall (x,f), (x',g) \in \functionSpace, \qquad \langle (x,f), (x',g) \rangle = \langle x, x' \rangle_{\mathbb{R}^d} + \int_{0}^1{f(t)g(t)\d{t}}.
     \]
     \noindent
     \underline{Multivariate functional with multidimensional parameter:} $\functionSpace = \mathbb{R}^d \times L^2_n([0,1])$ with $n,d \ge 1$ and endowed with the inner product
     \[
       \forall (x,f), (x',g) \in \functionSpace, \qquad \langle (x,f), (x',g) \rangle = \langle x, x' \rangle_{\mathbb{R}^d} + \int_{0}^1{\langle f(t),g(t) \rangle_{\mathbb{R}^n}\d{t}}.
     \]
     The previous two examples can, for instance, model a computer code with $d$ scalar inputs and one scalar-valued or vector-valued (respectively) functional input \cite{Betancourt2020fGPs}.
    
    \noindent
    \underline{Complex functional with multidimensional parameter:} $\functionSpace = \mathbb{C}^d \times L^2([0,1];\mathbb{C})$ with $d \ge 1$, and $L^2([0,1];\mathbb{C})$ the space of $\mathbb{C}$-valued, square integrable functions defined on $[0,1]$. Denote by $(\cdot|\cdot)_{\mathbb{C}^d}$ a complex inner product on $\mathbb{C}^d$. Endowing $\functionSpace$ with the following inner product
     \[
       \forall (x,f), (x',g) \in \functionSpace, \qquad \langle (x,f), (x',g) \rangle = (x|x')_{\mathbb{C}^d} + \int_{0}^1{f(t)\overline{g(t)}\d{t}},
     \]
     yields an example of a complex, separable Hilbert space.

\subsection{Gaussian processes}
    \label{sec:generalities+GPs:subsec:GPs}

    Let us consider a GP $Y$ indexed by $\functionSpace$ and defined over an underlying probability space $(\Omega, \mathcal{F}, \mathbb{P})$, with zero-mean function and unknown covariance function (kernel) $K_0$. We assume that $K_0$ belongs to a parametric family of the form
    \begin{equation}
	\label{eq:kernel}
	\left\{K_\theta; \theta \in \Theta \right\},
    \end{equation}
    with $\Theta \subset \realset{p}$ compact and $\theta$ in the interior of $\Theta$. The parameter $\theta = (\theta_1, \ldots, \theta_p)$ is referred to as the covariance parameter of $K_\theta$. In particular, we denote $K_0 := K_{\theta_0}$ for some unknown covariance parameter $\theta_0 \in \Theta$. For instance, it is customary to consider the squared exponential (SE) covariance structure (see, e.g., \cite{Bachoc2018_GPdistInputs,Rasmussen2005GP}):
    \begin{equation}
        \left\{
	K_{\theta} = \theta_1 e^{-\frac{\|f-g\|^2}{2\theta_2^2}}, 
	\theta = (\theta_1, \theta_2) \in C \times C'
	\right\},
	\label{eq:statkernel}
    \end{equation}
    for $f, g \in \functionSpace$ and $C, C'$ compact sets in $(0, \infty)$. For the kernel structure in~\eqref{eq:statkernel}, $(\theta_1, \theta_2)$ corresponds to the variance and the length-scale parameters (respectively) of $K_{\theta}$. While $\theta_1$ can be seen as a scale parameter of the output, $\theta_2$ can be viewed as a scale parameter for the input variable.

    The parameters $\theta$ are typically estimated from a training dataset which consists of tuples of inputs and outputs. In our case, this dataset is given by $(f_i, y_i)_{i=1,\ldots, n}$, where $f_i$ can represent a deterministic function (i.e. $f_i \in \functionSpace$ with $\functionSpace = L^2([0,1])$) and $y_i$ represents the corresponding output when considering $f_i$. For instance, in a coastal flooding context, the outputs may correspond to the outcome of a computer code modeling the water height of a flood event as a function of the tide, with the latter being a time-varying function~\cite{Betancourt2020fGPs,Lopezlopera2021multioutputGP}. The underlying mapping function relating $f_i$ and $y_i$ can be then modeled as a realization of a GP $Y$ such that $Y(f_i) = y_i$ for all $i = 1, \ldots, n$. Due to the properties of Gaussian distributions, the conditional process $Y$ that interpolates the observations is also GP-distributed~\cite{Rasmussen2005GP,stein1999interpolation}. In this paper, we are interested in the case where the underlying mapping function indeed comes from a GP $Y$ with covariance function $K_{\theta_0}$.

    \paragraph{Covariance parameter estimation via maximum likelihood.} 
    Here, we focus on ML estimation (MLE), a widely used method that has received great attention from a theoretical point of view~\cite{Bachoc2021AsymptAnalMLEGPs,furrer2023asymptotic,stein1999interpolation}. MLE involves maximizing the Gaussian likelihood, which is equivalent to minimizing the modified negative log Gaussian likelihood:
	\begin{equation}
		\negloglikeFun_{n}(\theta) = \frac{1}{n} \ln|R_\theta| + \frac{1}{n} y^\top R_\theta^{-1} y,
		\label{eq:negLogGaussianLike}
	\end{equation}
    with $y = [y_1, \ldots, y_n]^\top$, $R_\theta = R(\theta) := (K_\theta(f_i, f_j))_{1 \leq i,j \leq n}$ and $|R_\theta|$ the determinant of $R_\theta$. Then, the ML estimator is $\thetaML 
    \in \argmin_{\theta \in \Theta} \negloglikeFun_{n}(\theta)$.
    

    \paragraph{Gaussian process regression.} In regression tasks, the value $Y(f)$, for any input $f \in \functionSpace$, can be predicted by plugging $\thetaML$ into the conditional expectation expression (also known as the posterior mean) for GPs~\cite{Rasmussen2005GP,stein1999interpolation}. More precisely, denoting $\widehat{Y}_{{\theta_0}}(f)$ for the conditional expectation of $Y(f)$ given $Y(f_1) = y_1, \ldots, Y(f_n) = y_n$, then we have
	\begin{equation*}
		\widehat{Y}_{{\theta_0}}(f) 
        := 
        \expect(Y(f) | Y(f_1) = y_1, \ldots, Y(f_n) = y_n)
        = r_{{\theta_0}}^\top(f) R_{{\theta_0}}^{-1} y,
	\end{equation*}
	with $r_{{\theta}}(f) = [K_{{\theta}}(f,f_1), \ldots, K_{{\theta}}(f,f_n)]^\top$ and $R_\theta = (K_\theta(f_i, f_j))_{1 \leq i,j \leq n}$ for any $\theta \in \Theta$. Given that the parameter $\theta_0$ is unknown in practice, the GP prediction at $f$ is thus given by
    \begin{equation*}
		\widehat{Y}_{{\thetaML}}(f) 
        := r_{{\thetaML}}^\top(f) R_{{\thetaML}}^{-1} y.
		\label{eq:pred}
	\end{equation*}
 Furthermore, the expected squared error of this estimate is obtained by computing the conditional variance at $f$
    \begin{equation*}
		\Var(Y(f) | Y(f_1) = y_1, \ldots, Y(f_n) = y_n) = K_{{\theta_0}}(f,f) - r_{{\theta_0}}^\top(f) R_{{\theta_0}}^{-1} r_{{\theta_0}}(f).
	\end{equation*}
    and replacing the unknown parameter $\theta_0$ by its estimate $\thetaML$, yielding the following estimator
    \begin{equation*}
         K_{{\thetaML}}(f,f) - r_{{\thetaML}}^\top(f) R_{{\thetaML}}^{-1} r_{{\thetaML}}(f).
         \label{eq:predVar}
    \end{equation*}
    \section{Sufficient conditions}
    \label{sec:conditions}
    In the increasing-domain asymptotic analysis, we need to consider a deterministic triangular array $\{f_1^{n}, \ldots, f_{n}^{n}\}_{n \in \mathbb{N}^*}$ of elements in $\functionSpace$. We require the input sequences to satisfy the following separability hypothesis.
    \begin{Condition}
	\label{cond:distant}	
	Suppose that there exists $\Delta > 0$ such that for all $n \in \mathbb{N}^*$ and for all $1 \leq i \ne j \leq n$, we have $\|f_i^n-f_j^n\| \geq 2 \Delta$.
    \end{Condition}
    When $\functionSpace = \realset{d}, d \geq 1$, Condition~\ref{cond:distant}	is standard in the increasing-domain asymptotics literature. In the following, to ease the notation, we will simply write $f_i = f_i^{n}$ for all $1 \le i \le n$ and whenever $n$ is fixed.
    
    Condition~\ref{cond:orthormalBasis} imposes a form of uniform control over the projections of the input elements over a fixed orthonormal basis. More precisely, we require that the projections whose orders exceed a certain value are uniformly (with respect to the input elements) bounded in such a way that the series of the squared bounds is convergent. This condition will be needed to use approximation arguments and derive both the asymptotic consistency as well as the asymptotic normality of the estimator.  
    \begin{Condition}
	\label{cond:orthormalBasis}
	For
 $n \in \mathbb{N}^*$ and $1 \le i \le n$, we denote 	\begin{equation*}
		f_{i,d} = \sum_{j=1}^d \langle f_i, e_j\rangle e_j,
	\end{equation*}
	with $(e_j)_{j \in \mathbb{N}^*}$ an orthonormal basis of $\mathcal{H}$. We assume there is a $J_0$ and a sequence $(s_j)_{j \ge J_0}$ of non-negative numbers such that for all $1 \leq i \leq n$, $|\langle f_i, e_j \rangle| \leq s_j$, for $j \geq J_0$, with $\sum_{j=J_0}^{\infty} s_j^2 < \infty$. 
\end{Condition}

Note that Condition~\ref{cond:orthormalBasis} with $J_0=1$ and Condition~\ref{cond:distant} are incompatible. Indeed, if they hold,
there exists $d$ such that $\left|\|f_{i,d} - f_{j,d}\| - \|f_{i} - f_{j}\|\right| \leq 2\sqrt{\sum_{k=d+1}^\infty s_k^2} \leq \Delta$. Hence, for $i \ne j$, $\|f_{i,d} - f_{j,d}\| \geq \Delta$. And, since $J_0 = 1$, then
\begin{equation*}
	\|f_{i,d}\|^2 \leq \|f_{i}\|^2 \leq \sum_{k=1}^\infty s_k^2.
\end{equation*}
Identifying the projections $f_{i,d}$ with their coordinates in $\mathbb{R}^d$ through the canonical isometry $\tau_d$, we find that there exists a ball in $\mathbb{R}^d$ which contains an infinite number of $\tau_d(f_{i,d})$ with $\|\tau_d(f_{i,d}) - \tau_d(f_{j,d})\| = \|f_{i,d}-f_{j,d}\| \geq \Delta$ for $i \ne j$, which is a contradiction.
\smallskip

To derive the consistency of the estimator, we will use approximation arguments. Thus, Lemma~\ref{lemma:existd} will play an important role as it guarantees that, under the hypothesis of Theorem~\ref{theo:consistency}, there exists an approximation order large enough to ensure that the approximated input data is well-behaved.

\begin{lemma}
    \label{lemma:existd}
    Suppose that Conditions~\ref{cond:distant} and \ref{cond:orthormalBasis} are satisfied, then there exists $d_0 \ge J_0$ such that for all $1 \leq i < j \leq n$, $\|f_{j,d_0} - f_{i,d_0}\|\geq \Delta$ and $\sum_{k=d_0+1}^\infty s_k^2 < \Delta^2/16$.
\end{lemma}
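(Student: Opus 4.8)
The plan is to combine the two conditions by first choosing the truncation order large enough that the tail of the basis expansion is negligible compared to $\Delta$, and then verifying that this forces the truncated elements to inherit the separation from Condition~\ref{cond:distant}. First I would note that since $\sum_{j=J_0}^{\infty} s_j^2 < \infty$, the tails $\sum_{k=d+1}^\infty s_k^2$ tend to $0$ as $d \to \infty$; hence there exists $d_0 \ge J_0$ with $\sum_{k=d_0+1}^\infty s_k^2 < \Delta^2/16$, which already secures the second claimed inequality. It remains to check that this same $d_0$ (or, if needed, any larger index, which only shrinks the tail further) gives the separation $\|f_{j,d_0} - f_{i,d_0}\| \ge \Delta$ for all $1 \le i < j \le n$.

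For the separation, the key observation is the elementary bound already used in the incompatibility discussion preceding the lemma: for any $d \ge J_0$ and any $i \ne j$,
\[
  \bigl| \|f_{i,d} - f_{j,d}\| - \|f_i - f_j\| \bigr| \le \|(f_i - f_{i,d}) - (f_j - f_{j,d})\| \le \|f_i - f_{i,d}\| + \|f_j - f_{j,d}\|,
\]
and each of these residual norms satisfies $\|f_i - f_{i,d}\|^2 = \sum_{k=d+1}^\infty \langle f_i, e_k\rangle^2 \le \sum_{k=d+1}^\infty s_k^2$ by Condition~\ref{cond:orthormalBasis} (here using $d \ge J_0$ so that the pointwise bounds $|\langle f_i,e_k\rangle| \le s_k$ apply for all $k > d$). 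Therefore
\[
  \bigl| \|f_{i,d} - f_{j,d}\| - \|f_i - f_j\| \bigr| \le 2\sqrt{\sum_{k=d+1}^\infty s_k^2}.
\]
Taking $d = d_0$ and invoking $\sum_{k=d_0+1}^\infty s_k^2 < \Delta^2/16$ yields $2\sqrt{\sum_{k=d_0+1}^\infty s_k^2} < \Delta/2$, so with Condition~\ref{cond:distant} ($\|f_i - f_j\| \ge 2\Delta$) we get $\|f_{i,d_0} - f_{j,d_0}\| \ge 2\Delta - \Delta/2 = 3\Delta/2 \ge \Delta$. This completes both assertions with the single choice $d_0$.

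There is essentially no serious obstacle here; the argument is a routine truncation estimate. The only point requiring a little care is bookkeeping with the index $J_0$: the pointwise bounds $|\langle f_i, e_j\rangle| \le s_j$ are only assumed for $j \ge J_0$, so one must make sure the chosen $d_0$ is at least $J_0$ before invoking them in the tail estimate — which is why the statement requires $d_0 \ge J_0$, and why picking $d_0$ from the convergent tail condition (automatically allowing $d_0 \ge J_0$) is the natural route. One should also keep the constant $\Delta^2/16$ (rather than something looser) because the factor of $2$ in the residual bound combines with the square root to give the clean $\Delta/2$ slack above; this same slack is presumably what later proofs exploit when passing from the exact inputs to their truncations.
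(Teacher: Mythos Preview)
Your proof is correct and uses essentially the same ingredients as the paper's own proof: the triangle inequality, Parseval's identity for the residual $\|f_i - f_{i,d}\|$, and the tail bound from Condition~\ref{cond:orthormalBasis}. If anything, your version is more explicit than the paper's: the paper simply argues that $\liminf_{d\to\infty}\|f_{j,d}-f_{i,d}\|>\Delta$ and leaves the existence of a suitable $d_0$ (and the tail condition $\sum_{k=d_0+1}^\infty s_k^2<\Delta^2/16$) implicit, whereas you show directly that choosing $d_0$ via the tail bound $\Delta^2/16$ already forces $\|f_{i,d_0}-f_{j,d_0}\|\ge 3\Delta/2$, which makes clear why that particular constant appears and gives a $d_0$ that is manifestly uniform in $i,j$ and $n$.
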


\begin{proof}[Proof of Lemma~\ref{lemma:existd}]
Let $1 \le i\not = j \le n$ and $d \in \mathbb{N}^*$, then using the triangle inequality as well as Parseval's identity, we get 
	\begin{align*}
		\|f_{j} - f_{i}\|
		&\leq \|f_{j} - f_{j,d}\| +  \|f_{i,d} - f_{i}\| + \|f_{j,d} - f_{i,d}\|
		\\
		&= \Bigg(\sum_{k=d+1}^{\infty}{|\langle f_i,e_k\rangle|^2}\Bigg)^{\frac{1}{2}} + \Bigg(\sum_{k=d+1}^{\infty}{|\langle f_j,e_k\rangle|^2}\Bigg)^{\frac{1}{2}} + \|f_{j,d} - f_{i,d}\|.
	\end{align*}
	Thus, using Conditions~\ref{cond:distant} and~\ref{cond:orthormalBasis}, we have $\liminf_{d \to \infty} \|f_{j,d} - f_{i,d}\| > \Delta$.
\end{proof}

Henceforth, whenever both Conditions~\ref{cond:distant} and \ref{cond:orthormalBasis} are verified, we will denote by $d_0$ the integer given in Lemma \ref{lemma:existd} and which guarantees that its conclusions hold.  
\smallskip

In everything that follows, we will write $a \ \triangleleft \  b$ whenever there exists a positive constant $C$ which does not depend on $a$, or $b$, but may depend on some underlying fixed parameter, such that $a \le Cb$.
\begin{Condition}
	\label{cond:covFunctCondition}		
	The parametric set of covariance functions given by \eqref{eq:kernel} is supposed to be stationary, that is we can write the kernels $K_{\theta}$ as 
	\begin{equation*}
		\forall \theta \in \Theta,\forall f,g \in \functionSpace, \ K_\theta(f, g) = K_\theta(f - g).
	\end{equation*}
	We also suppose that both Conditions~\ref{cond:distant} and \ref{cond:orthormalBasis} are satisfied, and that there exists a fixed $\gamma_0 > d_0$, such that for all $f \in \functionSpace$ we have
	\begin{equation*}
		\sup_{\theta \in \Theta} |K_\theta(f)| \ \triangleleft \  \frac{1}{1+\|f\|^{\gamma_0}}.
	\end{equation*}
\end{Condition}

    Condition \ref{cond:covFunctCondition} requires the underlying GP to be stationary. Note that a special case of a stationary process is an isotropic process, where for every isometry $\tau$ of $\functionSpace$, it holds for any $f,g \in \functionSpace$
	\begin{equation}
        K_0(f,g) = K_0(\|f -  g\|)
        = \cov(Y(f), Y(g))
        =
        \cov(Y(\tau(f)), Y(\tau(g))).
		\label{eq:stationarity}
	\end{equation}
    An example of an isotropic covariance function is the SE kernel in~\eqref{eq:statkernel}.


The inverse of the matrix $R_{\theta}$ also plays a key role in the definition of the ML estimator as can be seen in~\eqref{eq:negLogGaussianLike}. As such, any convergence result regarding this estimator will require some form of control over $R_{\theta}^{-1}$ to avoid some approximation-related divergence phenomenon whenever $|R_{\theta}|$ is close to zero. Thus, Condition~\ref{cond:infEingenVals} can be understood as a stability condition. 
\begin{Condition}
	\label{cond:infEingenVals}	
	There exists a fixed $c > 0$ such that for any $\theta \in \Theta$, the sequence of matrices $R_\theta = (K_\theta(f_i, f_j))_{1 \leq i,j \leq n}$ satisfies
	\begin{equation*}
		\lambda_{\min}(R_\theta) \geq c,
	\end{equation*}
	where $\lambda_{\min}(R_\theta)$ denotes the smallest eigenvalue of $R_\theta$. Thus $
	\lambda_{\max} (R_{\theta}^{-1}) \leq 1/c$, where $\lambda_{\max}(R_\theta^{-1})$ denotes the largest eigenvalue of $R_\theta^{-1}$.
\end{Condition}
For instance, in the case of a stationary covariance function given by $K_{\theta}(\omega) = \overline{K}_{\theta}(w) + \delta_{\theta}\textbf{1}_{\{\omega = 0\}}$ where $\overline{K}_{\theta}$ is a continuous covariance function and $\inf_{\theta \in \Theta} \delta_{\theta} > 0$, then Condition \ref{cond:infEingenVals} is always satisfied.

A very common hypothesis within parametric estimation is that of the separability of the covariance parameters~\cite{Bachoc2014AsymptAnalGPs,Bachoc2018_GPdistInputs}. Condition~\ref{cond:diffK} imposes a form of global separability for the estimated parameter. Indeed, it can be interpreted as follows: given enough input data, it is always possible to distinguish between the true covariance parameter $\theta_0$ and any other parameter $\theta$. This condition is crucial to ensure that the studied problem is well-posed. 
\begin{Condition}
	\label{cond:diffK}	
	For all $\alpha >0$,
	\begin{equation*}
		\liminf_{n \to \infty} \inf_{\|\theta-\theta_0\| \geq \alpha} \frac{1}{n} \sum_{i,j = 1}^{n} [K_\theta(f_i,f_j)-K_{\theta_0}(f_i,f_j)]^2 > 0.
	\end{equation*}
\end{Condition}
Condition~\ref{cond:positivity} can be interpreted as the asymptotic linear independence of the derivatives of the covariance function at $\theta _0$. In particular, this condition will ensure that the asymptotic covariance matrix $\asympConMat$ given by \eqref{eq:asympConMat} is well-defined, i.e. that it is indeed a covariance matrix. 
\begin{Condition}
	\label{cond:positivity}
	For all $(\lambda_1, \ldots, \lambda_p) \ne (0, \ldots, 0),$
	\begin{equation*}
		\liminf_{n \to \infty} \frac{1}{n} \sum_{i,j = 1}^{n} \left(\sum_{k=1}^{p} \lambda_k \frac{\partial K_{\theta}}{\partial \theta_k} \bigg|_{\theta = \theta_0} \hspace{-3.5ex}(f_i,f_j) \right)^2 > 0.
	\end{equation*}
\end{Condition}
Both Conditions~\ref{cond:dcovFunctCondition} and \ref{cond:d3covFunctCondition} are standard regularity conditions within the ML estimation with Gaussian data. For these two conditions, we first need to assume that Conditions \ref{cond:distant} and~\ref{cond:orthormalBasis} are satisfied.
\begin{Condition}
	\label{cond:dcovFunctCondition}
	For any $f \in \functionSpace$, $K_\theta(f)$ is continuously differentiable with respect to $\theta$ and, for some $\gamma_1 > d_0$ with $d_0$ as in Lemma~\ref{lemma:existd}, we have
	\begin{equation*}
		\sup_{\theta \in \Theta} \max_{i = 1, \ldots, p} \left|\frac{\partial K_\theta}{\partial \theta_i}(f)\right| \ \triangleleft \ \frac{1}{1+\|f\|^{\gamma_1}}.
	\end{equation*}
\end{Condition}

\begin{Condition}
	\label{cond:d3covFunctCondition}	
	For any $f \in \functionSpace$, $K_\theta(f)$ is three times continuously differentiable with respect to $\theta$ and we have, for $q \in \{2,3\}$, $i_1, \ldots, i_q \in \{1, \ldots, p\}$, and for some $\gamma_2 > d_0$ with $d_0$ as in  Lemma~\ref{lemma:existd},
	\begin{equation*}
		\sup_{\theta \in \Theta} \left|\frac{\partial^q K_\theta}{\partial \theta_{i_1}\cdots \partial \theta_{i_q}} (f)\right| \ \triangleleft \ \frac{1}{1+\|f\|^{\gamma_2}}.
	\end{equation*}
\end{Condition}

\paragraph{The case of isotropic processes.}
All the conditions mentioned above hold for any stationary GP even if the process is not isotropic. However, when isotropy is indeed assumed, then we can relax some of the previous conditions to more tractable versions. The adaptations of those conditions to the isotropic case are summarized in Appendix~\ref{app:CondIsotropy}. 
\smallskip

We next give an example that covers the case of non-randomly perturbed grids in $\mathbb{N}^{d_0}$ with $d_0$ as in  Lemma~\ref{lemma:existd}. Note that small perturbations of regular grids are often considered to improve estimation by creating pairs of observations that are less independent
\cite{Bachoc2014AsymptAnalGPs,furrer2023asymptotic}.
\begin{example}
    \label{example:perturbed}
	Let $(e_n)_{n \in \mathbb{N}}$ be an orthonormal basis of $\functionSpace = L^2([0,1])$ and $(x_i)_{i \in \mathbb{N}} \subset \mathbb{N}^{d_0}$ be deterministic such that for all $m \in \mathbb{N}$,
	\begin{equation*}
		\{x_1, \ldots, x_{m^{d_0}}\} = \{1, \ldots, m\}^{d_0}.
	\end{equation*}	
 We consider $\delta < \frac{1}{2}$ fixed and we define 
 \begin{equation*}
		f_i = \sum_{k=1}^{d_0} ((x_i)_k + \delta \xi_{i,k}) e_k + \sum_{k = d_0+1}^{\infty} \xi_{i,k} s_k e_k,
	\end{equation*}
	for $i \in \mathbb{N}$, where $(\xi_{i,k})_{i, k \in \mathbb{N}}$ is deterministic in $[-1,1]$. In that case, Condition \ref{cond:distant} is satisfied with $\Delta \le \frac{{1-2 \delta}}{2}$.
Now, let $\theta = (\theta_1,\theta_2)$ with $\Theta$ compact in $(0, \infty)^2$ and $K_{\theta}(f,g) = F_{\theta} (\|f-g\|) = \theta_1 e^{-\theta_2 \|f-g\|}$ for all $f, g \in  L^2([0,1])$. Then, Conditions~\ref{cond:diffK}, and~\ref{cond:positivity} are also verified. 
	
\end{example}

\begin{proof}
	
	
	
	
    Let $n \in \mathbb{N}^*$ and $1 \le i \ne j \le n$. There exists $k \in \{1, \ldots, d_0\}$ such that $(x_i)_{k} \ne (x_{j})_{k}$. Since, $\delta < \frac{1}{2}$, we have
    \begin{equation}
    \label{eq:distance_lower_bound}
		\left\|
		\sum_{k=1}^{d_0} \left[(x_i - x_j)_k + \delta \xi_{i,j,k}^{-} \right] e_k
		+ \sum_{k = d_0+1}^\infty s_k e_k \xi_{i,j,k}^{-} 
		\right\| \geq {1-2\delta} \ge 2\Delta,  
    \end{equation}
    with $\xi_{i,j,k}^{-} = \xi_{i,k} - \xi_{j,k}$ and $\Delta$ as mentioned in Example~\ref{example:perturbed}. In other words, Condition \ref{cond:distant} is satisfied with  $\Delta \le \frac{{1-2\delta}}{2}$.
    \smallskip
    
    Before establishing that Condition~\ref{cond:diffK}, and~\ref{cond:positivity} are also verified under the hypothesis of Example~\ref{example:perturbed}, we show a useful upper bound on the distance between two inputs. Consider $\theta = (\theta_1,\theta_2) \in \Theta$ and $F_{\theta}$  as described in Example~\ref{example:perturbed}. Let $m$ be the largest integer such that $m^{d_0} \leq n$. Then $m^{d_0} / n$ converges to $1$ as $n \to \infty$ since $d_0$ is fixed. Then for all $i \in \{1 ,\ldots , m^{d_0}\}$, there is a corresponding $j$ such that
	\begin{align*}
		(x_i)_1 - (x_{j})_1 &= \pm 1,
		\\
		(x_i)_k - (x_{j})_k &= 0, \quad k = 2, \ldots, d_0,
	\end{align*}
	and thus 
	\begin{equation}
            \label{eq:distance_upper_bound}
		\left\|
		\sum_{k=1}^{d_0} \left[(x_i - x_j)_k + \delta \xi_{i,j,k}^{-} \right] e_k
		+ \sum_{k = d_0+1}^\infty s_k e_k \xi_{i,j,k}^{-}
		\right\| \leq 1 + 2\delta (\sqrt{d_0-1}+1) + 2 R_{d_0},  
	\end{equation}
    where $R_{d_0}^2 = \sum_{k=d_0+1}^{\infty}{s_k^2}$.
    \smallskip
    
    Now, we prove Conditions~\ref{cond:diffK} and~\ref{cond:positivity}. Denote $G_{\theta, \theta'} (t) = F_{\theta}(t) - F_{\theta'}(t)$ and
	\begin{equation*}
		D_{n,\delta} = 
		\frac{1}{n} \sum_{i,j= 1}^n
		G_{\theta, \theta'}^2\left(
		\left\|
		\sum_{k=1}^{d_0} \left[(x_i - x_j)_k + \delta \xi_{i,j,k}^{-} \right] e_k
		+ \sum_{k = d_0+1}^\infty s_k e_k \xi_{i,j,k}^{-}
		\right\|
		\right).
	\end{equation*}
 \noindent
{Suppose that $\theta_1 \ne \theta'_1$}, then
	\begin{equation*}
		D_{n, \delta} \geq \frac{1}{n} \sum_{i = 1}^n G_{\theta,\theta'}^2(0)\geq \frac{1}{n} \sum_{i = 1}^n (\theta_1 - \theta'_1)^2 = (\theta_1 - \theta_1')^2,
	\end{equation*}
	and thus $\liminf_{n \to \infty} D_{n,\delta} >0$. 
    Now, without loss of generality, assume that {$\theta_1 = \theta'_1 = 1$ and $\theta_2 \ne \theta'_2$}. According to the convexity of the function $\theta_2 \mapsto e^{-\theta _2 L}$, we have for any positive $L$,
    \begin{equation*}
        \left|e^{-\theta_2L} - e^{-\theta_2'L}\right| \ge L\left|\theta_2 - \theta_2'\right|e^{-\max(\theta_2,\theta_2')L}.
    \end{equation*}
    Thus, using \eqref{eq:distance_lower_bound} and~\eqref{eq:distance_upper_bound}, we get
        \begin{equation*}
	D_{n, \delta} \geq \frac{(1-2\delta)^2}{n} \sum_{i = 1}^{m^{d_0}} (\theta_2 - \theta_2')^2e^{-2\max(\theta_2,\theta_2')[1 + 2\delta \left(\sqrt{d_0-1}+1\right) + 2 R_{d_0}]}. 
	\end{equation*}
	We see that $\liminf_{n \to \infty } {D}_{n, \delta} >0$.  This concludes the proof for Condition~\ref{cond:diffK}.
    \smallskip
    
	To show that Condition~\ref{cond:positivity} holds, we let $(\lambda_1,\lambda_2) \not= (0,0)$, $\theta_0 = (\theta_1, \theta_2)$. Note that $\frac{\partial F_\theta(t)}{\partial \theta_1}\Big|_{\theta = \theta_0} \hspace{-1ex} = e^{-\theta_2 t}$ as well as $\frac{\partial F_\theta(t)}{\partial \theta_2}\Big|_{\theta = \theta_0} \hspace{-1ex} = -\theta_1 t e^{-\theta_2 t}$ for all $t \geq 0$. Then,
	\begin{align*}
		\liminf_{n \to \infty} & \frac{1}{n} \sum_{i,j = 1}^{n} \left[\sum_{k=1}^{p} \lambda_k \left.\frac{\partial K_{\theta}}{\partial \theta_k} \right|_{\theta = \theta_0} \hspace{-4ex}(f_i,f_j) \right]^2 
		= \liminf_{n \to \infty} \frac{1}{n} \sum_{i,j = 1}^{n} \left(
		[\lambda_1 - \lambda_2 \theta_1 g_{ij}] e^{-\theta_2 g_{ij}}
		\right)^2,
	\end{align*}
        where $g_{ij} = \|f_i - f_j\|$. For $\lambda_1 \ne 0$,
	\begin{align*}
		\liminf_{n \to \infty} \frac{1}{n} \sum_{i,j = 1}^{n} \left(
		[\lambda_1 - \lambda_2 \theta_1 g_{ij}] e^{-\theta_2 g_{ij}}
		\right)^2
            \geq
            \liminf_{n \to \infty} \frac{1}{n} \sum_{i = 1}^{n} \lambda_1^2
		> 0.
	\end{align*}
	If $\lambda_1 = 0$ and $\lambda_2 \ne 0$, then using \eqref{eq:distance_upper_bound}, we obtain
	\begin{align*}
		\liminf_{n \to \infty} \frac{1}{n} \sum_{i,j = 1}^{n} 
		\lambda_2^2 \theta_1^2 g_{ij}^2 e^{-2\theta_2 g_{ij}}
		\geq 
            \liminf_{n \to \infty} \frac{4}{n} \sum_{i = 1}^{m^{d_0}} \lambda_2^2 \theta_1^2 \Delta^2  e^{- 2 \theta_2 [1 + 2\delta (\sqrt{d_0-1}+1) + 2 R_{d_0}]}
		> 0.
	\end{align*}
    This concludes the proof of Condition~\ref{cond:positivity}.
\end{proof}

\section{Asymptotic results}
\label{sec:asympProperties}

In this section, we introduce the theorems related to the asymptotic properties of the ML estimator of a GP with functional inputs. For conciseness, the proofs of all the technical lemmas are summarized in Appendix~\ref{app:proofLemmas}.


\subsection{Asymptotic consistency}
\label{sec:asympProperties:subsec:consistency}

We first focus on the asymptotic consistency of the ML estimator. Before presenting the main theorem, it is worth establishing the following technical lemmas to provide a detailed proof.

 The following lemma, when combined with Conditions~\ref{cond:distant} and~\ref{cond:orthormalBasis}, enables control of the $L^1$-norm of the covariance matrix $R_\theta$. This, coupled with an application of Gershgorin's circle theorem~\cite{Li2018GersgorinTheo}, allows for the control of the eigenvalues of $R_\theta$.
\begin{lemma}
    \label{lemma:A}
    For $g:\mathbb{R}^+ \to \mathbb{R}$ a non-increasing function and $(x_i)_{i \in \mathbb{N}^*}$ a sequence of real non-negative numbers, we have for any $n \in \mathbb{N}^*$,
    \begin{equation*}
        \sum_{i=1}^{n} g(x_i)
	\le \sum_{k=1}^{\infty} \nu_k g({k-1}),
    \end{equation*}
     with $\nu_k = \#\{i \ : \ x_i \in [k-1,k)\}$, $k \in \mathbb{N}^*$.
\end{lemma}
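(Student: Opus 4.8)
The plan is to prove this by a direct rearrangement of the sum according to which "annulus" $[k-1,k)$ each $x_i$ falls into, using monotonicity of $g$ to bound each term from above by the value of $g$ at the left endpoint of its annulus. First I would partition the index set $\{1,\dots,n\}$ into the disjoint blocks $I_k = \{\, i \le n : x_i \in [k-1,k) \,\}$ for $k \ge 1$; since each $x_i \ge 0$, every index lies in exactly one such block, so $\{1,\dots,n\} = \bigsqcup_{k\ge 1} I_k$ (a finite disjoint union, as only finitely many $I_k$ are nonempty). By definition $\#I_k = \nu_k$ where, to match the statement, one should read $\nu_k$ as counting only indices $i$ with $i\le n$ (so $\nu_k$ depends on $n$, but $\nu_k \le \#\{i\in\mathbb{N}^* : x_i\in[k-1,k)\}$ regardless).

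Next I would write
\begin{equation*}
    \sum_{i=1}^{n} g(x_i) = \sum_{k=1}^{\infty} \sum_{i \in I_k} g(x_i),
\end{equation*}
where the outer sum is really finite. For $i \in I_k$ we have $x_i \ge k-1$, and since $g$ is non-increasing this gives $g(x_i) \le g(k-1)$. Hence $\sum_{i \in I_k} g(x_i) \le \nu_k\, g(k-1)$, and summing over $k$ yields
\begin{equation*}
    \sum_{i=1}^{n} g(x_i) \le \sum_{k=1}^{\infty} \nu_k\, g(k-1),
\end{equation*}
which is exactly the claim. One small caveat I would flag: the inequality $g(x_i)\le g(k-1)$ is only guaranteed meaningful if $g(k-1)\ge g(x_i)$, i.e. we implicitly use that $g$ is non-increasing on all of $\mathbb{R}^+$; if $g$ can take negative values the bound still holds term by term, so the argument needs no positivity assumption on $g$ itself, only monotonicity.

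There is really no substantive obstacle here — the statement is a bookkeeping lemma. The only point requiring a line of care is the convergence/finiteness of the right-hand series versus the well-definedness of the manipulation: since at most $n$ of the blocks $I_k$ are nonempty, the left-hand rearrangement is a genuine finite sum and no Fubini-type justification is needed; the right-hand side $\sum_{k\ge 1}\nu_k g(k-1)$ may be an infinite series, but the asserted inequality is between an explicit finite number and (the limit of partial sums of) that series, so it holds whether or not the series converges, interpreting it in $[-\infty,+\infty]$ if necessary. In the intended application $g$ will be a decaying kernel bound of the form $1/(1+t^{\gamma})$ and the $\nu_k$ will have at most polynomial growth in $k$ controlled by Conditions~\ref{cond:distant} and~\ref{cond:orthormalBasis}, so the series will in fact converge; but that is a matter for the later proof and not for this lemma.
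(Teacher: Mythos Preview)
Your proof is correct and follows exactly the same approach as the paper: partition the indices according to the intervals $[k-1,k)$ containing $x_i$, rewrite the sum as a double sum, and bound each inner term using the monotonicity of $g$. The paper's argument is a one-line version of what you wrote; your additional remarks on finiteness and the sign of $g$ are sound but not needed for the paper's purposes.
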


\begin{lemma}
	\label{lemma:S}
	Assume that Conditions \ref{cond:distant} to \ref{cond:covFunctCondition} hold. Then
	\[
	\sup_{\theta \in \Theta} \max_{j=1,\ldots,n} \sum_{i=1}^{n} |K_\theta(f_j, f_i)|
	\]
	is bounded as $n \to \infty$.
\end{lemma}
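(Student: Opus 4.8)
The plan is to bound, uniformly in $n$ and in $j \in \{1,\ldots,n\}$, the quantity $\sum_{i=1}^{n} \sup_{\theta \in \Theta} |K_\theta(f_j,f_i)|$ by a convergent numerical series that does not depend on $n$ or $j$. By stationarity and Condition~\ref{cond:covFunctCondition}, $\sup_{\theta \in \Theta}|K_\theta(f_j,f_i)| = \sup_{\theta \in \Theta}|K_\theta(f_j - f_i)| \ \triangleleft\  (1+\|f_j-f_i\|^{\gamma_0})^{-1}$, so it suffices to bound $\sum_{i=1}^{n}(1+\|f_j-f_i\|^{\gamma_0})^{-1}$ uniformly.

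First I would pass from $\functionSpace$ to $\mathbb{R}^{d_0}$. Since $d_0 \ge J_0$, Condition~\ref{cond:orthormalBasis} and Lemma~\ref{lemma:existd} give $\|f_i - f_{i,d_0}\|^2 \le \sum_{k=d_0+1}^{\infty} s_k^2 < \Delta^2/16$, hence $\|f_i - f_{i,d_0}\| < \Delta/4$ for every $i$. Combining this with the triangle inequality ($f_j - f_i = (f_{j,d_0}-f_{i,d_0}) + (f_j - f_{j,d_0}) - (f_i - f_{i,d_0})$) and the lower bound $\|f_{j,d_0}-f_{i,d_0}\| \ge \Delta$ from Lemma~\ref{lemma:existd}, one obtains, for $i \ne j$, $\|f_j - f_i\| \ge \|f_{j,d_0}-f_{i,d_0}\| - \Delta/2 \ge \tfrac12 \|f_{j,d_0}-f_{i,d_0}\|$ (the term $i=j$ contributes a single summand equal to $1$). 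Identifying $f_{i,d_0}$ with $x_i := \tau_{d_0}(f_{i,d_0}) \in \mathbb{R}^{d_0}$ via the canonical isometry, the family $(x_i)_{1\le i\le n}$ is $\Delta$-separated, and with $g(t) := (1 + 2^{-\gamma_0} t^{\gamma_0})^{-1}$ (non-increasing on $\mathbb{R}^+$) and $t_i := \|x_j - x_i\|$ we get $(1+\|f_j-f_i\|^{\gamma_0})^{-1} \le g(t_i)$ for all $i$, so it remains to bound $\sum_{i=1}^{n} g(t_i)$.

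Next I would apply Lemma~\ref{lemma:A} to $g$ and $(t_i)$, yielding $\sum_{i=1}^{n} g(t_i) \le \sum_{k=1}^{\infty} \nu_k\, g(k-1)$ with $\nu_k = \#\{i : \|x_j - x_i\| \in [k-1,k)\}$. The crucial geometric input is the packing estimate $\nu_k \ \triangleleft\  k^{d_0-1}$, uniform in $j$ and $n$: the balls $B(x_i,\Delta/2)$ are pairwise disjoint, and for the indices counted by $\nu_k$ they are contained in the annulus $B(x_j, k+\Delta/2) \setminus B(x_j, (k-1-\Delta/2)_+)$, so comparing Lebesgue volumes gives $\nu_k \ \triangleleft\  (k+\Delta/2)^{d_0} - \big((k-1-\Delta/2)_+\big)^{d_0} \ \triangleleft\  k^{d_0-1}$. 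Substituting, $\sum_{i=1}^{n} g(t_i) \ \triangleleft\  \sum_{k=1}^{\infty} k^{d_0-1}(1+(k-1)^{\gamma_0})^{-1}$, whose general term is $O(k^{d_0-1-\gamma_0})$; this series converges precisely because $\gamma_0 > d_0$ (Condition~\ref{cond:covFunctCondition}), and the resulting bound is independent of $n$ and $j$, which proves the claim.

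The main obstacle is the packing estimate $\nu_k \ \triangleleft\  k^{d_0-1}$, and more generally ensuring that every constant hidden in the $\triangleleft$ notation depends only on the fixed data ($\Delta$, $d_0$, $\gamma_0$, the constant in Condition~\ref{cond:covFunctCondition}, and the dimension-dependent volume constant) and never on $n$ or $j$; the reduction from $\functionSpace$ to $\mathbb{R}^{d_0}$ is then routine bookkeeping once Lemma~\ref{lemma:existd} is available.
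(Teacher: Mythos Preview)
Your proposal is correct and follows essentially the same route as the paper: bound via Condition~\ref{cond:covFunctCondition}, reduce from $\functionSpace$ to the $d_0$-dimensional projections using Lemma~\ref{lemma:existd}, apply Lemma~\ref{lemma:A}, and finish with the packing estimate $\nu_k \ \triangleleft\  k^{d_0-1}$ together with $\gamma_0 > d_0$. The only cosmetic difference is that the paper absorbs the $\Delta/2$ shift via an explicit inequality of the form $\frac{C}{1+(x-\delta)^{\gamma_0}} \le \frac{1}{1+x^{\gamma_0}}$ for $x \ge 2\delta$, whereas you obtain the cleaner $\|f_j-f_i\| \ge \tfrac12\|f_{j,d_0}-f_{i,d_0}\|$ and fold the factor $2^{-\gamma_0}$ into $g$; you also spell out the volume-comparison justification for $\nu_k$, which the paper simply asserts.
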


\begin{lemma}
\label{lemma:S'}
Suppose that Conditions \ref{cond:distant} to \ref{cond:covFunctCondition}, and Condition \ref{cond:dcovFunctCondition} are all satisfied. Then, for $k = 1, \ldots, p$,
	\[
	\sup_{\theta \in \Theta} \max_{j=1,\ldots,n} \sum_{i=1}^{n} \left|\frac{\partial K_\theta}{\partial \theta_k} (f_j, f_i)\right|
	\]
	is bounded as $n \to \infty$.
\end{lemma}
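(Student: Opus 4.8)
The plan is to mirror the proof of Lemma~\ref{lemma:S}, simply replacing the kernel $K_\theta$ by its partial derivative $\partial K_\theta/\partial\theta_k$ and the decay exponent $\gamma_0$ by the exponent $\gamma_1$ of Condition~\ref{cond:dcovFunctCondition}; the only property of these exponents that the argument uses is that they exceed $d_0$. First I would fix $\theta \in \Theta$, $k \in \{1,\ldots,p\}$ and $j \in \{1,\ldots,n\}$. Since the kernels are stationary (Condition~\ref{cond:covFunctCondition}), so are their $\theta$-derivatives, hence $\frac{\partial K_\theta}{\partial\theta_k}(f_j,f_i) = \frac{\partial K_\theta}{\partial\theta_k}(f_j - f_i)$, and Condition~\ref{cond:dcovFunctCondition} gives $\big|\frac{\partial K_\theta}{\partial\theta_k}(f_j - f_i)\big| \ \triangleleft\ (1+\|f_j-f_i\|^{\gamma_1})^{-1} =: g(\|f_j-f_i\|)$, where $g$ is non-increasing on $\mathbb{R}^+$ and the implied constant does not depend on $n$, $\theta$, $j$ or $k$.

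Next I would pass to the finite-dimensional projections $f_{i,d_0}$. By Lemma~\ref{lemma:existd}, $\|f_{i,d_0}-f_{j,d_0}\| \geq \Delta$ for $i\neq j$ and $R_{d_0}:=(\sum_{m>d_0}s_m^2)^{1/2}<\Delta/4$; combining the triangle inequality with Parseval's identity yields $\|f_j-f_i\| \geq \|f_{j,d_0}-f_{i,d_0}\| - 2R_{d_0} \geq \tfrac12\|f_{j,d_0}-f_{i,d_0}\|$. Since $g$ is non-increasing and $g(x/2)\leq 2^{\gamma_1}g(x)$, it suffices to bound $\sum_{i=1}^n g(\|f_{j,d_0}-f_{i,d_0}\|)$. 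Putting $x_i = \|f_{j,d_0}-f_{i,d_0}\|$ (so $x_j=0$) and applying Lemma~\ref{lemma:A}, this is at most $\sum_{m=1}^\infty \nu_m\, g(m-1)$ with $\nu_m = \#\{i : x_i\in[m-1,m)\}$.

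The crux is the estimate $\nu_m \ \triangleleft\ m^{d_0-1}$, uniformly in $n$ and $j$: identifying each $f_{i,d_0}$ with its coordinates in $\mathbb{R}^{d_0}$ via the canonical isometry, the points counted by $\nu_m$ lie in the spherical shell of radii $m-1$ and $m$ about (the image of) $f_{j,d_0}$, and by the $\Delta$-separation the open balls of radius $\Delta/2$ around them are pairwise disjoint and contained in the fattened shell of radii $m-1-\Delta/2$ and $m+\Delta/2$, so comparing Lebesgue volumes gives $\nu_m \leq C(d_0,\Delta)\,m^{d_0-1}$ for $m\geq 1$. Then $\sum_{m=1}^\infty \nu_m\, g(m-1) \ \triangleleft\ 1 + \sum_{m\geq 2} m^{d_0-1}/(m-1)^{\gamma_1} < \infty$ because $\gamma_1 > d_0$, and this bound depends only on $d_0$, $\Delta$, $\gamma_1$ and the constant in Condition~\ref{cond:dcovFunctCondition} --- not on $n$, $j$, $\theta$ or $k$ --- so taking the supremum over $\theta$ and the maximum over $j$ and over the finitely many $k\in\{1,\ldots,p\}$ concludes the proof. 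I expect the only genuine work to be this volumetric counting for $\nu_m$ (already needed for Lemma~\ref{lemma:S}); the one point to handle with a little care is the small-$m$ regime, i.e. the shell $[0,1)$, where the crude bound $\nu_1 \leq C(d_0,\Delta)$ is enough.
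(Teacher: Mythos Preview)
Your proposal is correct and follows essentially the same route as the paper: the paper's own proof of Lemma~\ref{lemma:S'} simply invokes Condition~\ref{cond:dcovFunctCondition} to obtain the bound $\widetilde{S}_j \ \triangleleft \ \sum_{i=1}^{n}(1+\|f_j-f_i\|^{\gamma_1})^{-1}$ and then refers back to the proof of Lemma~\ref{lemma:S}, which is exactly what you do. Your passage from $\|f_j-f_i\|$ to $\|f_{j,d_0}-f_{i,d_0}\|$ via the inequality $\|f_j-f_i\|\geq\tfrac12\|f_{j,d_0}-f_{i,d_0}\|$ together with $g(x/2)\leq 2^{\gamma_1}g(x)$ is a minor variant of the paper's inequality~\eqref{ineq:poly_decrease}, and your explicit volumetric packing argument for $\nu_m\ \triangleleft\ m^{d_0-1}$ spells out what the paper states without proof.
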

Note that only the first part of Condition \ref{cond:covFunctCondition}, i.e. the stationary requirement, is needed for Lemma~\ref{lemma:S'} to hold. However, we have decided to unify both parts as one condition for clarity purposes thereafter.
\begin{lemma}
    \label{lemma:lambdaMax}
    Under the same assumptions as in Lemma \ref{lemma:S'}, we have that
	\begin{equation*}
		\sup_{\theta \in \Theta} \lambda_{\max} (R_\theta) \qquad \textrm{and} \qquad \sup_{\theta \in \Theta} \max_{i = 1, \ldots, p} \lambda_{\max} \left(\frac{\partial R}{\partial \theta_i}(\theta) \right)
	\end{equation*}
	are bounded as $n \to \infty$.
\end{lemma}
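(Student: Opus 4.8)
The plan is to bound the largest eigenvalue of a symmetric matrix by its maximal absolute row sum, which is exactly what Lemmas~\ref{lemma:S} and~\ref{lemma:S'} control. Concretely, for any symmetric matrix $A = (A_{ij})_{1 \le i,j \le n}$, the Gershgorin circle theorem gives $\lambda_{\max}(A) \le \max_{j=1,\ldots,n} \sum_{i=1}^{n} |A_{ij}|$; equivalently, this is the bound $\|A\|_2 \le \|A\|_\infty$ on the operator norm, valid since $A$ is symmetric (so $\|A\|_2 \le \sqrt{\|A\|_1 \|A\|_\infty} = \|A\|_\infty$). Applying this with $A = R_\theta$, whose entries are $K_\theta(f_i,f_j)$, yields
\[
\sup_{\theta \in \Theta} \lambda_{\max}(R_\theta) \le \sup_{\theta \in \Theta} \max_{j=1,\ldots,n} \sum_{i=1}^{n} |K_\theta(f_j,f_i)|,
\]
and the right-hand side is bounded as $n \to \infty$ by Lemma~\ref{lemma:S}.

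For the second quantity, I would observe that $\frac{\partial R}{\partial \theta_i}(\theta)$ is again a symmetric matrix, with entries $\frac{\partial K_\theta}{\partial \theta_i}(f_j,f_i)$ — here one uses that differentiation in $\theta$ commutes with forming the matrix, and that stationarity plus the continuous differentiability assumed in Condition~\ref{cond:dcovFunctCondition} legitimately allow us to write the $(j,i)$ entry of the derivative matrix as the partial derivative of the kernel evaluated at $(f_j,f_i)$. Then the same Gershgorin/operator-norm bound gives
\[
\sup_{\theta \in \Theta} \max_{i=1,\ldots,p} \lambda_{\max}\!\left(\frac{\partial R}{\partial \theta_i}(\theta)\right) \le \sup_{\theta \in \Theta} \max_{i=1,\ldots,p} \max_{j=1,\ldots,n} \sum_{k=1}^{n} \left|\frac{\partial K_\theta}{\partial \theta_i}(f_j,f_k)\right|,
\]
which is bounded as $n \to \infty$ by Lemma~\ref{lemma:S'} (applied for each of the finitely many indices $i = 1, \ldots, p$, and taking the maximum). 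Since both suprema are over a finite range of $i$, this finishes the argument.

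There is essentially no obstacle here: the lemma is a short corollary of the two preceding row-sum lemmas combined with a standard symmetric-matrix eigenvalue bound. The only mild point of care is the bookkeeping that the derivative matrix is symmetric and has the entries claimed (so that Lemma~\ref{lemma:S'} applies verbatim), and noting that $\lambda_{\max}$ of a symmetric matrix is dominated by the $\infty$-norm rather than needing the full spectral radius — but since $R_\theta$ and its derivatives are real symmetric, $|\lambda_{\max}| \le \rho(A) \le \|A\|_\infty$ covers it, and in fact for $R_\theta$ positive semidefinite the largest eigenvalue is exactly the spectral radius.
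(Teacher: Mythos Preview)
Your proposal is correct and matches the paper's own proof essentially verbatim: the paper also applies Gershgorin's circle theorem to bound $\lambda_{\max}(R_\theta)$ by the maximal absolute row sum and then invokes Lemma~\ref{lemma:S}, and for the derivative matrices it argues identically with Lemma~\ref{lemma:S'} in place of Lemma~\ref{lemma:S}.
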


Lemma \ref{lemma:consitencyLike} requires us to introduce a new item of notation. For any sequence of random variables $(X_n)_{n \in \mathbb{N}}$, we use the probabilistic Landau notation, that is
\[
X_n = O_{\mathbb{P}}(1) \qquad \textrm{means} \qquad  \lim_{M \to \infty} \limsup_{n \to \infty} \, \mathbb{P}(|X_n| > M) = 0.
\]
\begin{lemma}
    \label{lemma:consitencyLike}
	Suppose that $M:=\max_{i=1,\ldots,p}\sup_{\theta \in \Theta}\left|\frac{\partial \negloglikeFun_{n}}{\partial \theta_{i}}(\theta)\right| = O_{\mathbb{P}}(1)$ and that $\negloglikeFun_{n}(\theta) - \mathbb{E}\left[\negloglikeFun_{n}(\theta)\right] \xrightarrow[n \to \infty]{\mathbb{P}} 0$ for all $\theta \in \Theta$, then
	$$
	\sup_{\theta \in \Theta}\left|\negloglikeFun_{n}(\theta) - \mathbb{E}\left[\negloglikeFun_{n}(\theta)\right]\right|\xrightarrow[n \to \infty]{\mathbb{P}} 0.
	$$
\end{lemma}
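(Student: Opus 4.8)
The plan is to establish a uniform law of large numbers for the family $\{\negloglikeFun_n(\theta) - \expect[\negloglikeFun_n(\theta)] : \theta \in \Theta\}$ by combining pointwise convergence (given as a hypothesis) with a stochastic equicontinuity / tightness argument coming from the gradient bound $M = O_{\mathbb{P}}(1)$ (also given). This is the standard route for deducing a uniform limit from a pointwise limit over a compact parameter set, and the only subtlety here is that the functions involved are random, so the equicontinuity must be handled ``in probability''.

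First I would set $Z_n(\theta) := \negloglikeFun_n(\theta) - \expect[\negloglikeFun_n(\theta)]$ and observe that, since $\Theta \subset \realset{p}$ is compact, for any $\varepsilon > 0$ one can cover $\Theta$ by finitely many balls $B(\theta^{(1)}, \eta), \ldots, B(\theta^{(N_\eta)}, \eta)$ of radius $\eta$ (to be chosen), with $N_\eta$ depending only on $\eta$ and $\Theta$. For $\theta \in B(\theta^{(\ell)}, \eta)$, the mean value theorem applied to $\negloglikeFun_n$ (which is $C^1$ in $\theta$ by the regularity Conditions, hence so is its expectation) gives
\[
|Z_n(\theta) - Z_n(\theta^{(\ell)})| \le \eta \sqrt{p}\left( M + \expect\Big[\max_{i}\sup_{\theta \in \Theta}\big|\tfrac{\partial \negloglikeFun_n}{\partial \theta_i}(\theta)\big|\Big]\right).
\]
Here $M = O_{\mathbb{P}}(1)$ by assumption, and the expectation term is bounded uniformly in $n$ — this last fact I would extract from the same estimates that yield $M = O_{\mathbb{P}}(1)$ (namely the bounds on $\lambda_{\max}(R_\theta)$, $\lambda_{\max}(R_\theta^{-1})$ from Lemma~\ref{lemma:lambdaMax} and Condition~\ref{cond:infEingenVals}, together with Lemma~\ref{lemma:S'}), since those give an $L^1$ (indeed $L^2$) bound on the gradient, not merely a stochastic-boundedness statement. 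Then
\[
\sup_{\theta \in \Theta} |Z_n(\theta)| \le \max_{\ell = 1,\ldots,N_\eta} |Z_n(\theta^{(\ell)})| + \eta \sqrt{p}\,( M + C ),
\]
for a constant $C$ independent of $n$.

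To conclude, fix $\varepsilon, \delta > 0$. Choose $\eta$ small enough that $\mathbb{P}(\eta\sqrt{p}(M+C) > \varepsilon/2)$ is small for all large $n$ — possible precisely because $M = O_{\mathbb{P}}(1)$ makes $\{M\}_n$ tight, so $\eta\sqrt{p}(M+C) \to 0$ in probability as $\eta \to 0$ uniformly in $n$ in the relevant sense. With $\eta$ (hence $N_\eta$) now fixed, the pointwise-convergence hypothesis gives $Z_n(\theta^{(\ell)}) \xrightarrow{\mathbb{P}} 0$ for each of the finitely many centers $\theta^{(\ell)}$, so $\max_{\ell} |Z_n(\theta^{(\ell)})| \xrightarrow{\mathbb{P}} 0$. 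Combining the two pieces via a union bound yields $\mathbb{P}(\sup_{\theta}|Z_n(\theta)| > \varepsilon) < \delta$ for $n$ large, which is the claim.

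The main obstacle is making the equicontinuity step rigorous: one must be careful that the Lipschitz-type modulus bounding $|Z_n(\theta) - Z_n(\theta')|$ is controlled \emph{uniformly in $n$} with high probability, and in particular that the deterministic expectation part $\expect[\max_i \sup_\theta |\partial_{\theta_i}\negloglikeFun_n|]$ is genuinely $O(1)$. This requires revisiting the moment estimates underlying $M = O_{\mathbb{P}}(1)$ rather than using that statement as a black box; once that uniform $L^1$ gradient bound is in hand, the covering argument is routine.
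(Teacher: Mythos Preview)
Your proposal is correct and follows essentially the same route as the paper: a finite cover of the compact set $\Theta$, the mean value inequality driven by the gradient bound $M=O_{\mathbb{P}}(1)$ to control increments of $Z_n$, and then pointwise convergence at the finitely many centers. The paper is actually less explicit than you are about the deterministic part $\partial_{\theta_i}\expect[\negloglikeFun_n(\theta)]$---it simply asserts that the gradient of the centered process is $O_{\mathbb{P}}(1)$ and writes the increment bound as $2M|\theta-\widetilde\theta_{i_0}|$---so your remark that this step requires revisiting the underlying eigenvalue bounds (Lemma~\ref{lemma:lambdaMax}, Condition~\ref{cond:infEingenVals}) is a fair clarification rather than a departure.
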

We recall that the proofs of the lemmas outlined in this section are summarized in Appendix~\ref{app:proofLemmas}.
\smallskip

Having introduced the aforementioned lemmas, we can now establish Theorem~\ref{theo:consistency} which gives the asymptotic consistency of the ML estimator.
\begin{theorem}
    \label{theo:consistency}
    Let $\thetaML \in \Theta$ be the maximum likelihood estimator obtained by minimizing the modified negative log Gaussian likelihood in~\eqref{eq:negLogGaussianLike}. Under Conditions~\ref{cond:distant} to~\ref{cond:diffK}, as well as Condition~\ref{cond:dcovFunctCondition}, we have
    \begin{equation*}
	\thetaML \xrightarrow[n \to \infty]{\mathbb{P}} \theta_0.
    \end{equation*}
\end{theorem}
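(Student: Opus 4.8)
The plan is to follow the classical M-estimation route: show that the random criterion $\negloglikeFun_n$ converges uniformly (in probability) to its expectation $\expect[\negloglikeFun_n]$, show that $\expect[\negloglikeFun_n]$ is, up to an additive constant not depending on $\theta$, asymptotically minimized only near $\theta_0$ (an identifiability/well-separation statement coming from Condition~\ref{cond:diffK}), and then conclude by the standard argmin-consistency argument. Concretely, I would first write $\expect[\negloglikeFun_n(\theta)] = \frac1n \ln|R_\theta| + \frac1n \tr(R_\theta^{-1} R_{\theta_0})$ and observe, using $\ln x \le x-1$, that $\expect[\negloglikeFun_n(\theta)] - \expect[\negloglikeFun_n(\theta_0)] = \frac1n\big[\ln|R_\theta| - \ln|R_{\theta_0}| + \tr(R_\theta^{-1}R_{\theta_0}) - n\big] = \frac1n\sum \big(\mu_i - 1 - \ln\mu_i\big) \ge 0$, where $\mu_i$ are the eigenvalues of $R_\theta^{-1/2} R_{\theta_0} R_\theta^{-1/2}$; moreover, a second-order Taylor bound $x - 1 - \ln x \ge c'(x-1)^2$ valid on the compact eigenvalue range (legitimate by Conditions~\ref{cond:infEingenVals} and Lemma~\ref{lemma:lambdaMax}, which bound $\lambda_{\min}$ and $\lambda_{\max}$ of all $R_\theta$ away from $0$ and $\infty$) gives $\expect[\negloglikeFun_n(\theta)] - \expect[\negloglikeFun_n(\theta_0)] \ \triangleright\ \frac1n \| R_{\theta_0} - R_\theta \|_F^2 = \frac1n\sum_{i,j}[K_\theta(f_i,f_j)-K_{\theta_0}(f_i,f_j)]^2$. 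By Condition~\ref{cond:diffK} this is bounded below by a positive constant uniformly on $\{\|\theta-\theta_0\|\ge\alpha\}$ for large $n$, so $\theta_0$ is asymptotically well-separated as the minimizer of $\expect[\negloglikeFun_n]$.

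Next I would establish the uniform convergence $\sup_{\theta\in\Theta}|\negloglikeFun_n(\theta) - \expect[\negloglikeFun_n(\theta)]| \xrightarrow{\mathbb P} 0$ via Lemma~\ref{lemma:consitencyLike}. This needs two inputs. First, pointwise convergence $\negloglikeFun_n(\theta) - \expect[\negloglikeFun_n(\theta)] \xrightarrow{\mathbb P} 0$: only the quadratic term is random, $\negloglikeFun_n(\theta) - \expect[\negloglikeFun_n(\theta)] = \frac1n(y^\top R_\theta^{-1} y - \tr(R_\theta^{-1}R_{\theta_0}))$ with $y\sim\normrnd{0}{R_{\theta_0}}$, so its variance is $\frac{2}{n^2}\tr\big((R_\theta^{-1}R_{\theta_0})^2\big) \le \frac{2}{n^2}\,\lambda_{\max}(R_{\theta_0})^2\lambda_{\max}(R_\theta^{-1})^2\, n \le \frac{C}{n} \to 0$ using Condition~\ref{cond:infEingenVals} and Lemma~\ref{lemma:lambdaMax}; Chebyshev then gives convergence in probability. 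Second, the stochastic-equicontinuity surrogate $M := \max_i \sup_\theta |\partial\negloglikeFun_n/\partial\theta_i(\theta)| = O_{\mathbb P}(1)$: differentiating, $\frac{\partial\negloglikeFun_n}{\partial\theta_i}(\theta) = \frac1n\tr\big(R_\theta^{-1}\frac{\partial R}{\partial\theta_i}\big) - \frac1n y^\top R_\theta^{-1}\frac{\partial R}{\partial\theta_i}R_\theta^{-1} y$; the deterministic term is $\le \frac1n \cdot n \cdot \lambda_{\max}(R_\theta^{-1})\lambda_{\max}(\partial R/\partial\theta_i) \le C$ by Lemma~\ref{lemma:lambdaMax} (second bound) and Condition~\ref{cond:infEingenVals}, and writing $y = R_{\theta_0}^{1/2} z$ with $z$ standard Gaussian, the random term is a quadratic form $\frac1n z^\top A_{n,\theta,i} z$ with $\|A_{n,\theta,i}\|_{\mathrm{op}}$ uniformly bounded, so its expectation is $O(1)$ and, using Condition~\ref{cond:d3covFunctCondition} for a uniform Lipschitz bound in $\theta$ plus a standard chaining/maximal-inequality argument over the compact $\Theta$, the supremum over $\theta$ is $O_{\mathbb P}(1)$. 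Lemma~\ref{lemma:consitencyLike} then yields the uniform convergence.

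Finally, combine the pieces: let $\widehat\theta_n \in \argmin_\theta \negloglikeFun_n(\theta)$. For any $\alpha>0$, on $\{\|\widehat\theta_n - \theta_0\|\ge\alpha\}$ we have $\expect[\negloglikeFun_n(\widehat\theta_n)] - \expect[\negloglikeFun_n(\theta_0)] \ge \epsilon_\alpha > 0$ for $n$ large (well-separation), whereas $\negloglikeFun_n(\widehat\theta_n) \le \negloglikeFun_n(\theta_0)$ by optimality; subtracting and using uniform convergence twice, $\expect[\negloglikeFun_n(\widehat\theta_n)] - \expect[\negloglikeFun_n(\theta_0)] \le |\negloglikeFun_n(\widehat\theta_n) - \expect[\negloglikeFun_n(\widehat\theta_n)]| + |\negloglikeFun_n(\theta_0) - \expect[\negloglikeFun_n(\theta_0)]| \le 2\sup_\theta|\negloglikeFun_n - \expect[\negloglikeFun_n]| \xrightarrow{\mathbb P} 0$, so $\mathbb P(\|\widehat\theta_n-\theta_0\|\ge\alpha) \to 0$. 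I expect the main obstacle to be the $O_{\mathbb P}(1)$ control of $M$ uniformly in $\theta$: the deterministic bounds on eigenvalues are clean, but turning pointwise bounds on the random quadratic forms into a uniform-in-$\theta$ statement requires the regularity Conditions~\ref{cond:dcovFunctCondition}–\ref{cond:d3covFunctCondition} together with a maximal-inequality argument, and this is where the functional-input setting (controlling sums like $\sum_i |\partial K_\theta(f_i,f_j)|$ uniformly, handled by Lemmas~\ref{lemma:S'} and~\ref{lemma:lambdaMax}) does the real work compared to the finite-dimensional case.
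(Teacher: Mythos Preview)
Your overall strategy matches the paper's proof almost exactly: uniform convergence of $\negloglikeFun_n$ to its mean via Lemma~\ref{lemma:consitencyLike}, the lower bound $\expect[\negloglikeFun_n(\theta)]-\expect[\negloglikeFun_n(\theta_0)] \ \triangleright\ \frac{1}{n}\|R_\theta-R_{\theta_0}\|_F^2$ obtained from the eigenvalue calculation and $t-1-\ln t \ \triangleright\ (t-1)^2$ on a compact spectral interval, and then the standard argmin argument via Condition~\ref{cond:diffK}.

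There is one genuine issue in your treatment of $M=\max_i\sup_{\theta}|\partial\negloglikeFun_n/\partial\theta_i|=O_{\mathbb P}(1)$. You invoke Condition~\ref{cond:d3covFunctCondition} together with a chaining/maximal-inequality argument to control the supremum of the random quadratic form over $\theta$. But Condition~\ref{cond:d3covFunctCondition} is \emph{not} among the hypotheses of Theorem~\ref{theo:consistency} (only Conditions~\ref{cond:distant}--\ref{cond:diffK} and~\ref{cond:dcovFunctCondition} are), so your argument as written uses an assumption you do not have. More importantly, the chaining is unnecessary: writing the random term as $\frac{1}{n}y^\top A_{\theta,i}\,y$ with $A_{\theta,i}=R_\theta^{-1}\frac{\partial R}{\partial\theta_i}(\theta)R_\theta^{-1}$, one has the crude but uniform bound
\[
\sup_{\theta\in\Theta}\Big|\tfrac{1}{n}y^\top A_{\theta,i}\,y\Big|
\ \le\ \Big(\sup_{\theta\in\Theta}\|A_{\theta,i}\|_{\mathrm{op}}\Big)\,\tfrac{1}{n}\|y\|^2,
\]
and $\sup_\theta\|A_{\theta,i}\|_{\mathrm{op}}\le \sup_\theta\lambda_{\max}(R_\theta^{-1})^2\,\lambda_{\max}(\partial R/\partial\theta_i)$ is bounded in $n$ by Condition~\ref{cond:infEingenVals} and Lemma~\ref{lemma:lambdaMax} (which uses only Condition~\ref{cond:dcovFunctCondition}). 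Since $\expect[\tfrac{1}{n}\|y\|^2]=\tfrac{1}{n}\tr(R_{\theta_0})\le\lambda_{\max}(R_{\theta_0})$ is bounded, $\tfrac{1}{n}\|y\|^2=O_{\mathbb P}(1)$ by Markov, and $M=O_{\mathbb P}(1)$ follows directly. This is essentially what the paper does (it simply notes that $\sup_\theta\lambda_{\max}(\partial R/\partial\theta_i)$ is bounded and then invokes Lemma~\ref{lemma:consitencyLike}); no Lipschitz-in-$\theta$ control of the quadratic form, and hence no third-derivative condition, is needed at this stage.
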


\begin{proof}[Proof of Theorem \ref{theo:consistency}]
    We start the proof in the same manner as in the proof of Proposition 3.1 in \cite{Bachoc2014AsymptAnalGPs}. Indeed, we first aim to obtain the following uniform convergence
	\begin{equation}
		\sup_{\theta \in \Theta} \left|\negloglikeFun_{n}(\theta) - \expect(\negloglikeFun_{n}(\theta))\right| \xrightarrow[n \to \infty]{\mathbb{P}} 0.
		\label{eq:op1}
	\end{equation}
    Let $n \in \mathbb{N}^*$ and recall that $\thetaML \in \argmin_{\theta \in \Theta} \negloglikeFun_{n}(\theta)$ with
	\begin{equation*}
		\negloglikeFun_{n}(\theta) = \frac{1}{n} \ln|R_\theta| + \frac{1}{n} y^\top R_\theta^{-1} y.
	\end{equation*}
    Let $\theta \in \Theta$, it holds
    \[
     \Var\left(\negloglikeFun_{n}(\theta)\right)= \frac{2}{n^2}\tr\left(R_{\theta}^{-1}R_{\theta_0}R_{\theta}^{-1}R_{\theta_0}\right). 
    \]
    However, according to Lemma~\ref{lemma:lambdaMax} the quantity $\sup_{\theta \in \Theta} \lambda_{\max} (R_\theta)$ is bounded in $n$. Therefore, we have the convergence $\Var\left(\negloglikeFun_{n}(\theta)\right) \xrightarrow[n \to \infty]{} 0$ and so $\negloglikeFun_{n}(\theta) - \mathbb{E}\left(\negloglikeFun_{n}(\theta)\right) = o_{\mathbb{P}}(1)$.
    Moreover,
    \begin{equation*}
		\sup_{\theta \in \Theta} \max_{i = 1, \ldots, p} \lambda_{\max} \left(\frac{\partial R}{\partial \theta_i} (\theta)\right)
	\end{equation*}
    is also bounded in $n$. Thus, applying Lemma \ref{lemma:consitencyLike}, we also have the uniform convergence in $\theta$ as given by \eqref{eq:op1}.
    \smallskip
    
    In a second time, we shall establish that \begin{equation}
 \label{eq:lower_bound}
		\expect(\negloglikeFun_{n}(\theta) - \negloglikeFun_{n}(\theta_0)) \triangleright \left\|R_\theta - R_{\theta_0}\right\|_2^2,
	\end{equation}
 where $\|R\|_2^2 := \frac{1}{n} \sum_{i,j=1}^{n} R_{i,j}^2$. To do so, we notice that since $\sup_{\theta}\lambda_{\max}(R_{\theta})$ (resp. $\inf_{\theta}\lambda_{\min}(R_{\theta})$) is uniformly bounded (resp. uniformly bounded away from $0$) in $n$, then there exist $a,b > 0$ such that for all $n$ and for all $\theta \in \Theta$, the spectrum of $R_{\theta}^{-1}R_{\theta_0}$ lies within $[a,b]$. Now, let $\theta \in \Theta$ and notice that
    \[
    \mathbb{E}\left(\negloglikeFun_{n}(\theta)\right) = \frac{1}{n}\ln |R_{\theta}|+\frac{1}{n}\tr\left(R_{\theta}^{-1}R_{\theta_0}\right).
    \] 
    Denoting by $\lambda_1, \ldots, \lambda _n$ the eigenvalues of $R_{\theta}^{-1}R_{\theta_0}$, we get 
    \begin{align*}
        \expect(\negloglikeFun_{n}(\theta) - \negloglikeFun_{n}(\theta_0)) & = \frac{1}{n}\left(\tr\left(R_{\theta}^{-1}R_{\theta_0}\right) - \ln |R_{\theta}^{-1}R_{\theta_0}| - n \right)\\
        & =\frac{1}{n}\sum_{i=1}^n\left(\lambda _i - \ln \lambda_ i -1\right).
    \end{align*}
    Since, for all $t \in [a, b]$ we have $t - \ln t -1 \triangleright (t-1)^2$, then
    \begin{align*}
    \frac{1}{n}\sum_{i=1}^n\left(\lambda _i - \ln \lambda_ i -1\right) & \triangleright \frac{1}{n}\sum_{i=1}^n\left(\lambda _i -1 \right)^2\\
    & = \left\|I_n - R_{\theta}^{-1}R_{\theta_0}\right\|_2^2\\
    & = \left\|R_{\theta}^{-1}\left(R_{\theta} - R_{\theta_0}\right)\right\|_2^2.
    \end{align*}
    However since $\sup_{\theta \in \Theta}\lambda_{\max}(R_{\theta})$ is bounded in $n$, we have
    $$
    \left\|R_{\theta}^{-1}\left(R_{\theta} - R_{\theta_0}\right)\right\|_2^2 \triangleright \left\|R_{\theta} - R_{\theta_0}\right\|_2^2,
    $$
    and thus, we can conclude that \eqref{eq:lower_bound} is verified. From~\eqref{eq:op1} and Condition~\ref{cond:diffK}, we have the convergence $\mathbb{P}(\|\thetaML - \theta_0\| > \epsilon) \xrightarrow[n \to \infty]{} 0$, for all $\epsilon > 0$, and so the conclusion Theorem~\ref{theo:consistency} holds.
\end{proof}

\subsection{Asymptotic normality}
\label{sec:asympProperties:subsec:normality}

Similarly to Section~\ref{sec:asympProperties:subsec:consistency}, we first provide two technical lemmas that are necessary to establish the main theorem concerning asymptotic normality. We start by obtaining an extension of Lemma~\ref{lemma:S'} to the partial derivatives of orders two and three.
\begin{lemma}
	\label{lemma:S''}
	Suppose that Conditions \ref{cond:distant} to \ref{cond:covFunctCondition}, and \ref{cond:d3covFunctCondition} are satisfied, then for all $q \in \{2,3\}$ and for all $i_1, \ldots, i_q \in \{1, \ldots, p\}$, 
	\[
	\sup_{\theta \in \Theta} \max_{j=1,\ldots,n} \sum_{i=1}^{n} \left|\frac{\partial^q K_\theta}{\partial \theta_{i_1} \cdots \ \partial \theta_{i_q}} (f_j, f_i)\right|
	\]
	is bounded as $n \to \infty$.
\end{lemma}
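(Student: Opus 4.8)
The plan is to mimic the proof of Lemma~\ref{lemma:S'} almost verbatim, replacing the first-order partial derivative $\partial K_\theta/\partial\theta_k$ by the higher-order mixed derivative $\partial^q K_\theta/\partial\theta_{i_1}\cdots\partial\theta_{i_q}$ and Condition~\ref{cond:dcovFunctCondition} by Condition~\ref{cond:d3covFunctCondition}. The key observation is that, because the kernels are stationary (first part of Condition~\ref{cond:covFunctCondition}), each mixed derivative $\partial^q K_\theta/\partial\theta_{i_1}\cdots\partial\theta_{i_q}(f_j,f_i)$ depends on the pair $(f_i,f_j)$ only through $f_j-f_i$, and Condition~\ref{cond:d3covFunctCondition} gives the uniform polynomial decay
\[
\sup_{\theta\in\Theta}\left|\frac{\partial^q K_\theta}{\partial\theta_{i_1}\cdots\partial\theta_{i_q}}(f_j-f_i)\right| \ \triangleleft\ \frac{1}{1+\|f_j-f_i\|^{\gamma_2}},
\]
with $\gamma_2 > d_0$. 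So the sum $\sum_{i=1}^n |\partial^q K_\theta/\partial\theta_{i_1}\cdots\partial\theta_{i_q}(f_j,f_i)|$ is bounded, uniformly in $\theta$ and $j$, by $\sum_{i=1}^n g(\|f_j-f_i\|)$ where $g(t)=C/(1+t^{\gamma_2})$ is non-increasing on $\mathbb{R}^+$.

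First I would fix $q\in\{2,3\}$, indices $i_1,\ldots,i_q$, an integer $n$ and an index $j\in\{1,\ldots,n\}$, and apply Lemma~\ref{lemma:A} with $x_i=\|f_j-f_i\|$ to get
\[
\sum_{i=1}^n g(\|f_j-f_i\|)\ \le\ \sum_{k=1}^\infty \nu_k\, g(k-1),\qquad \nu_k=\#\{i:\ \|f_j-f_i\|\in[k-1,k)\}.
\]
Then I would invoke the counting bound on $\nu_k$ that is already used inside the proofs of Lemmas~\ref{lemma:S} and~\ref{lemma:S'}: by Condition~\ref{cond:distant} the points $f_i$ are $2\Delta$-separated, so after projecting onto the first $d_0$ coordinates (using Lemma~\ref{lemma:existd}, which ensures the projected points $f_{i,d_0}$ remain $\Delta$-separated and that the tail $\sum_{k>d_0}s_k^2<\Delta^2/16$ is small), a packing argument in $\mathbb{R}^{d_0}$ yields $\nu_k \ \triangleleft\ k^{d_0-1}$. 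Substituting this bound gives
\[
\sum_{i=1}^n g(\|f_j-f_i\|)\ \triangleleft\ \sum_{k=1}^\infty \frac{k^{d_0-1}}{1+(k-1)^{\gamma_2}},
\]
which converges because $\gamma_2 > d_0$, and the resulting bound is independent of $n$, of $j$, and of $\theta$. Taking the supremum over $\theta\in\Theta$ and the maximum over $j$ finishes the proof.

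I do not expect any genuine obstacle here: the lemma is a routine extension of Lemma~\ref{lemma:S'}, and the only points requiring care are (i) checking that the stationarity half of Condition~\ref{cond:covFunctCondition} really is all that is needed so that the polynomial-decay hypothesis of Condition~\ref{cond:d3covFunctCondition} can be transferred from the increment $f_j-f_i$ to the pair, and (ii) making sure the exponent $\gamma_2>d_0$ (rather than $\gamma_2>d_0$ being merely assumed for $\gamma_0$ or $\gamma_1$) is exactly what guarantees summability of $\sum_k k^{d_0-1}/(1+(k-1)^{\gamma_2})$; both are immediate from the statements as given. The mildly delicate step, shared with the earlier lemmas, is the packing estimate $\nu_k \triangleleft k^{d_0-1}$ in the infinite-dimensional setting, but this has effectively been carried out already in the proof of Lemma~\ref{lemma:S} and can be cited rather than redone.
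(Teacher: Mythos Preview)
Your proposal is correct and matches the paper's own proof essentially verbatim: the paper simply invokes Condition~\ref{cond:d3covFunctCondition} to bound the sum by $\sum_{i=1}^n 1/(1+\|f_j-f_i\|^{\gamma_2})$ and then refers back to the arguments of Lemma~\ref{lemma:S} (projection to $d_0$ coordinates, Lemma~\ref{lemma:A}, and the packing bound $\nu_k \triangleleft k^{d_0-1}/\Delta^{d_0}$). The only cosmetic difference is that the paper passes to the projected distances $\|f_{j,d_0}-f_{i,d_0}\|$ \emph{before} applying Lemma~\ref{lemma:A}, whereas you apply Lemma~\ref{lemma:A} first and then control $\nu_k$ via projection; both orderings work.
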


Using Lemma~\ref{lemma:S''}, we can extend Lemma~\ref{lemma:lambdaMax} to the partial derivatives of orders two and three.
\begin{lemma}
	\label{lemma:lambdaMaxDeriatives}
        Assume that the same conditions as in Lemma \ref{lemma:S''} hold. Then, for all $q \in \{2,3\}$ and for all $i_1, \ldots, i_q \in \{1, \ldots, p\}$,
	\begin{equation*}
		\sup_{\theta \in \Theta} \lambda_{\max} \left(\frac{\partial^q R}{\partial \theta_{i_1}\cdots \ \partial \theta_{i_q}}(\theta) \right)
	\end{equation*}
	is bounded as $n \to \infty$.
\end{lemma}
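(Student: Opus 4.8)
\textbf{Proof plan for Lemma~\ref{lemma:lambdaMaxDeriatives}.}

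The plan is to mimic exactly the argument used to prove Lemma~\ref{lemma:lambdaMax}, replacing the role of Lemma~\ref{lemma:S'} by its higher-order analogue Lemma~\ref{lemma:S''}. Recall that for a symmetric matrix $A = (A_{ij})_{1 \le i,j \le n}$, Gershgorin's circle theorem gives $\lambda_{\max}(A) \le \max_{i=1,\ldots,n} \sum_{j=1}^n |A_{ij}|$, i.e. the largest eigenvalue is bounded by the maximal absolute row sum. Here, for fixed $q \in \{2,3\}$ and fixed indices $i_1,\ldots,i_q \in \{1,\ldots,p\}$, the matrix in question is
\[
\frac{\partial^q R}{\partial \theta_{i_1} \cdots \partial \theta_{i_q}}(\theta) = \left( \frac{\partial^q K_\theta}{\partial \theta_{i_1} \cdots \partial \theta_{i_q}}(f_j, f_i) \right)_{1 \le i,j \le n},
\]
which is well-defined and symmetric since, by Condition~\ref{cond:d3covFunctCondition}, $K_\theta$ is three times continuously differentiable in $\theta$ and $K_\theta(f_i,f_j) = K_\theta(f_j,f_i)$ (the covariance is symmetric), so differentiation commutes with this symmetry.

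First I would apply Gershgorin's circle theorem to this matrix to obtain, for every $\theta \in \Theta$,
\[
\lambda_{\max}\left( \frac{\partial^q R}{\partial \theta_{i_1} \cdots \partial \theta_{i_q}}(\theta) \right) \le \max_{j=1,\ldots,n} \sum_{i=1}^n \left| \frac{\partial^q K_\theta}{\partial \theta_{i_1} \cdots \partial \theta_{i_q}}(f_j, f_i) \right|.
\]
Taking the supremum over $\theta \in \Theta$ on both sides yields
\[
\sup_{\theta \in \Theta} \lambda_{\max}\left( \frac{\partial^q R}{\partial \theta_{i_1} \cdots \partial \theta_{i_q}}(\theta) \right) \le \sup_{\theta \in \Theta} \max_{j=1,\ldots,n} \sum_{i=1}^n \left| \frac{\partial^q K_\theta}{\partial \theta_{i_1} \cdots \partial \theta_{i_q}}(f_j, f_i) \right|.
\]
By Lemma~\ref{lemma:S''}, which holds since the hypotheses here are precisely Conditions~\ref{cond:distant} to~\ref{cond:covFunctCondition} together with Condition~\ref{cond:d3covFunctCondition}, the right-hand side is bounded as $n \to \infty$. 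Hence the left-hand side is bounded as $n \to \infty$, for each fixed $q \in \{2,3\}$ and each fixed tuple $(i_1,\ldots,i_q)$, which is the claim. Since there are finitely many such tuples, one could equivalently state a uniform bound over all of them.

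I do not anticipate a genuine obstacle here: the argument is a direct transcription of the proof of Lemma~\ref{lemma:lambdaMax}. The only minor point requiring a line of justification is that the derivative of the matrix $R_\theta$ is the matrix of the derivatives of its entries — which is immediate since $R_\theta$ has fixed finite size $n \times n$ and each entry is smooth in $\theta$ — and that this matrix is symmetric, so that the Gershgorin bound in terms of row sums indeed controls $\lambda_{\max}$ (and not merely the spectral radius in a way that could be spoiled by complex eigenvalues). Both facts are routine.
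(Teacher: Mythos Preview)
Your proposal is correct and follows precisely the paper's own approach: the proof of Lemma~\ref{lemma:lambdaMaxDeriatives} is obtained by applying Gershgorin's circle theorem and invoking Lemma~\ref{lemma:S''}, exactly mirroring how Lemma~\ref{lemma:lambdaMax} uses Lemma~\ref{lemma:S'}.
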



We can now introduce Theorem~\ref{theo:normality} which yields the asymptotic normality of the ML estimator. 
\begin{theorem}
	\label{theo:normality}
	For all $n \in \mathbb{N}$, let $\asympConMat$ be the $p \times p$ asymptotic covariance matrix defined by
	\begin{equation}
	    \label{eq:asympConMat}
		(\asympConMat)_{i,j} = \frac{1}{2n} \tr\bigg(R_{\theta_0}^{-1} \frac{\partial R}{\partial \theta_i}(\theta_0) R_{\theta_0}^{-1} \frac{\partial R}{\partial \theta_j}(\theta_0)\bigg).
	\end{equation}
	Suppose that Conditions \ref{cond:distant} to \ref{cond:d3covFunctCondition} are all satisfied, then the maximum likelihood estimator $\thetaML$ is asymptotically normal:
	\begin{equation}
	\label{eq:AsympNorm}
		\sqrt{n} \asympConMat^{1/2} (\thetaML - \theta_0) \xrightarrow[n \to \infty]{\negloglikeFun} \mathcal{N}(0, I_p). 
	\end{equation}
	Furthermore,
	\begin{equation}
		0 < \liminf_{n \to \infty} \lambda_{\min}(\asympConMat) \leq \limsup_{n \to \infty} \lambda_{\max}(\asympConMat) < \infty.
		\label{eq:posEigVals}
	\end{equation}
\end{theorem}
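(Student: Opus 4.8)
The overall strategy is the classical one for asymptotic normality of M-estimators: a Taylor expansion of the score function $\nabla \negloglikeFun_n$ around $\theta_0$, combined with (i) a central limit theorem for the score $\sqrt{n}\,\nabla\negloglikeFun_n(\theta_0)$, (ii) a law of large numbers type control showing the Hessian $\nabla^2\negloglikeFun_n(\theta_0)$ is close to $2M_n$ (or at least that $\frac12\nabla^2\negloglikeFun_n(\theta_0) - M_n \to 0$ in probability after suitable normalization), and (iii) a uniform bound on the third derivatives to control the remainder via the consistency from Theorem~\ref{theo:consistency}. This is exactly the route taken in~\cite{Bachoc2014AsymptAnalGPs}, and the point here is that Lemmas~\ref{lemma:S}--\ref{lemma:lambdaMaxDeriatives}, together with Conditions~\ref{cond:distant}--\ref{cond:d3covFunctCondition}, supply all the matrix-norm estimates needed to make that argument go through verbatim in the functional-input setting.

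\textbf{Step 1: spectral bounds on $M_n$ (the last display~\eqref{eq:posEigVals}).} I would treat this first since it is needed to invert $M_n$ in the CLT. Writing $M_n = \frac{1}{2n}\sum_{i,j}\big(\text{something}\big)$, the key identity is that for any $\lambda\in\mathbb{R}^p$, $\lambda^\top M_n\lambda = \frac{1}{2n}\tr\big((R_{\theta_0}^{-1}\sum_k\lambda_k\partial_k R)^2\big)$, which equals $\frac{1}{2n}\|R_{\theta_0}^{-1/2}(\sum_k\lambda_k\partial_k R)R_{\theta_0}^{-1/2}\|_{F}^2$ up to the spectral symmetrization. Using Condition~\ref{cond:infEingenVals} ($\lambda_{\max}(R_{\theta_0}^{-1})\le 1/c$) this is bounded below by $\frac{1}{2c^2 n}\|\sum_k\lambda_k\partial_k R\|_F^2 = \frac{1}{2c^2}\cdot\frac1n\sum_{i,j}(\sum_k\lambda_k\partial_{\theta_k}K_{\theta}|_{\theta_0}(f_i,f_j))^2$, whose $\liminf$ is positive by Condition~\ref{cond:positivity}; this gives $\liminf\lambda_{\min}(M_n)>0$. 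For the upper bound, $\lambda_{\max}(M_n)\le \frac{p}{2}\max_k\frac{1}{n}\tr\big((R_{\theta_0}^{-1}\partial_k R)^2\big) \le \frac{p}{2}\cdot\frac{1}{c^2}\cdot\frac1n\|\partial_k R\|_F^2$, and $\frac1n\|\partial_k R\|_F^2 = \frac1n\sum_{i,j}(\partial_{\theta_k}K(f_i,f_j))^2 \le \max_j\sum_i|\partial_{\theta_k}K(f_j,f_i)|^2$ is bounded by Lemma~\ref{lemma:S'} (or more simply bounded using Lemma~\ref{lemma:lambdaMax}). So~\eqref{eq:posEigVals} follows from Conditions~\ref{cond:infEingenVals},~\ref{cond:positivity} and Lemmas~\ref{lemma:S'}/\ref{lemma:lambdaMax}.

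\textbf{Step 2: the score CLT.} One has $\partial_i\negloglikeFun_n(\theta_0) = \frac1n\tr(R_{\theta_0}^{-1}\partial_i R) - \frac1n y^\top R_{\theta_0}^{-1}(\partial_i R)R_{\theta_0}^{-1}y$. Since $y\sim\mathcal N(0,R_{\theta_0})$, write $y = R_{\theta_0}^{1/2}z$ with $z\sim\mathcal N(0,I_n)$, so $\sqrt n\,\partial_i\negloglikeFun_n(\theta_0) = -\frac{1}{\sqrt n}(z^\top A_i z - \tr A_i)$ with $A_i = R_{\theta_0}^{1/2}R_{\theta_0}^{-1}(\partial_i R)R_{\theta_0}^{-1}R_{\theta_0}^{1/2}$, a quadratic form in a Gaussian vector centered at its mean. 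Its covariance structure is $\Cov\big(\sqrt n\,\partial_i\negloglikeFun_n,\sqrt n\,\partial_j\negloglikeFun_n\big) = \frac{2}{n}\tr(A_iA_j) = \frac2n\tr(R_{\theta_0}^{-1}\partial_i R\, R_{\theta_0}^{-1}\partial_j R) = 4(M_n)_{i,j}$. Asymptotic normality of $\frac{1}{\sqrt n}(z^\top A_i z-\tr A_i)$ follows from the standard central limit theorem for quadratic forms (Lindeberg / fourth-moment argument), for which the sufficient condition is $\lambda_{\max}(A_i)^2 / \tr(A_i^2)\to 0$; here $\lambda_{\max}(A_i)\le \lambda_{\max}(R_{\theta_0}^{-1})\lambda_{\max}(\partial_i R)$ is bounded (Condition~\ref{cond:infEingenVals}, Lemma~\ref{lemma:lambdaMax}) while $\tr(A_i^2) = 2n(M_n)_{ii} \triangleright n\to\infty$ by Step 1, so the ratio vanishes. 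Hence $\sqrt n\,\nabla\negloglikeFun_n(\theta_0)\xrightarrow{\negloglikeFun}\mathcal N(0,4M_\infty)$ along subsequences where $M_n\to M_\infty$ (or phrase it directly as $\big(2M_n\big)^{-1/2}\cdot\frac{\sqrt n}{2}\nabla\negloglikeFun_n(\theta_0)\xrightarrow{\negloglikeFun}\mathcal N(0,I_p)$, using Step~1 to keep $M_n^{-1/2}$ bounded).

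\textbf{Step 3: Hessian convergence and the remainder.} A direct computation gives $\partial_i\partial_j\negloglikeFun_n(\theta_0) = $ (deterministic trace terms) $+$ (a quadratic form in $z$ involving $\partial_i\partial_j R$, $\partial_i R$, $\partial_j R$). Taking expectations, $\expect\,\partial_i\partial_j\negloglikeFun_n(\theta_0) = 2(M_n)_{i,j}$ after the trace terms cancel appropriately (the standard Fisher-information identity for Gaussian models). The variance of each entry is $O(1/n)$ by the same quadratic-form variance bound as in Step 2 (using boundedness of $\lambda_{\max}(R_{\theta_0}^{-1}\partial_i R)$ and $\lambda_{\max}(R_{\theta_0}^{-1}\partial_i\partial_j R)$ from Lemmas~\ref{lemma:lambdaMax},~\ref{lemma:lambdaMaxDeriatives}), so $\nabla^2\negloglikeFun_n(\theta_0) - 2M_n\xrightarrow{\mathbb P}0$. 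For the third-order remainder, Taylor's theorem gives $0 = \nabla\negloglikeFun_n(\thetaML) = \nabla\negloglikeFun_n(\theta_0) + \nabla^2\negloglikeFun_n(\bar\theta)(\thetaML-\theta_0)$ for some $\bar\theta$ on the segment, and one shows $\sup_{\theta\in\Theta}|\partial_i\partial_j\partial_k\negloglikeFun_n(\theta)| = O_{\mathbb P}(1)$ — again a quadratic-form bound, now invoking Lemma~\ref{lemma:S''}/\ref{lemma:lambdaMaxDeriatives} plus Lemma~\ref{lemma:lambdaMax} and a union/supremum argument over $\Theta$ — so that $\nabla^2\negloglikeFun_n(\bar\theta) = \nabla^2\negloglikeFun_n(\theta_0) + O_{\mathbb P}(\|\thetaML-\theta_0\|) = 2M_n + o_{\mathbb P}(1)$ using Theorem~\ref{theo:consistency}. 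Combining, $\sqrt n(\thetaML-\theta_0) = -(2M_n + o_{\mathbb P}(1))^{-1}\sqrt n\,\nabla\negloglikeFun_n(\theta_0)$, and since $M_n$ is bounded and bounded away from zero (Step 1), Slutsky's lemma together with Step 2 yields $\sqrt n\,M_n^{1/2}(\thetaML-\theta_0)\xrightarrow{\negloglikeFun}\mathcal N(0,I_p)$, which is~\eqref{eq:AsympNorm}.

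\textbf{Main obstacle.} The routine part is the bookkeeping of trace identities; the genuinely delicate step is Step~2, the CLT for the score — more precisely, verifying the Lindeberg-type condition for the quadratic form uniformly enough that it passes to the limit. Because the $A_i$ depend on $n$ through the whole triangular array, one must argue along subsequences extracting a limit of $M_n$ (which exists and is invertible by Step~1) and check that the normal limit does not depend on the subsequence — or, cleaner, formulate the whole convergence with the self-normalization $\sqrt n\,M_n^{1/2}(\thetaML-\theta_0)$ so that the limiting covariance is always $I_p$. The functional nature of the inputs enters only through Lemmas~\ref{lemma:S}--\ref{lemma:lambdaMaxDeriatives}, which have already absorbed Conditions~\ref{cond:distant},~\ref{cond:orthormalBasis} and the decay Conditions~\ref{cond:covFunctCondition},~\ref{cond:dcovFunctCondition},~\ref{cond:d3covFunctCondition}; once those are in hand, the proof is structurally identical to the Euclidean case in~\cite{Bachoc2014AsymptAnalGPs}.
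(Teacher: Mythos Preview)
Your proposal is correct and follows essentially the same route as the paper: establish~\eqref{eq:posEigVals} from Condition~\ref{cond:positivity} and Lemma~\ref{lemma:lambdaMax}, obtain the score CLT with covariance $4M_n$, show $\nabla^2\negloglikeFun_n(\theta_0)\to 2M_n$ in probability, bound third derivatives uniformly via Lemma~\ref{lemma:lambdaMaxDeriatives}, and assemble by Taylor expansion. The paper packages the assembly as an explicit contradiction/subsequence argument (assume~\eqref{eq:AsympNorm} fails, extract $M_{\varphi(n)}\to M_\infty$, invoke Propositions~D.9--D.10 of~\cite{Bachoc2014AsymptAnalGPs} along the subsequence, contradiction), which is exactly the subsequence device you flag in your ``Main obstacle'' paragraph.
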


\begin{proof}[Proof of Theorem \ref{theo:normality}]
	From Lemma~\ref{lemma:lambdaMax} and Condition~\ref{cond:infEingenVals}, we have for every $n \in \mathbb{N}$, $|(M_n)_{i,j}| \leq B$ for a fixed $B < \infty$. In addition, for any $\lambda_1, \ldots, \lambda_p \in \mathbb{R}$ such that $\sum_{i=1}^p \lambda_i^2 = 1$; from Condition~\ref{cond:infEingenVals}, we get that
	\begin{align*}
		\sum_{i,j = 1}^p \lambda_i \lambda_j (\asympConMat)_{i,j}
		&= \sum_{i,j = 1}^p \lambda_i \lambda_j \frac{1}{2n} \tr\bigg(R_{\theta_0}^{-1} \frac{\partial R}{\partial \theta_i}(\theta_0) R_{\theta_0}^{-1} \frac{\partial R}{\partial \theta_j}(\theta_0)\bigg)
		\\
		&= \frac{1}{2n} \tr\bigg(\sum_{i,j = 1}^p \lambda_i R_{\theta_0}^{-1} \frac{\partial R}{\partial \theta_i}(\theta_0) R_{\theta_0}^{-1} \lambda_j \frac{\partial R}{\partial \theta_j}(\theta_0)\bigg)
		\\
		&= \frac{1}{2n} \tr\bigg(R_{\theta_0}^{-1}  \bigg[\sum_{i = 1}^p \lambda_i \frac{\partial R}{\partial \theta_i}(\theta_0)\bigg] R_{\theta_0}^{-1} \bigg[\sum_{j = 1}^p \lambda_j \frac{\partial R}{\partial \theta_j}(\theta_0)\bigg] \bigg)
		\\
		&\triangleright\left\|\sum_{i = 1}^p \lambda_i \frac{\partial R}{\partial \theta_i}(\theta_0) \right\|_2^2,
	\end{align*}
    where we recall that $\|R\|_2^2 := \frac{1}{n} \sum_{i,j=1}^{n} R_{i,j}^2$.
Hence, from Condition~\ref{cond:positivity}, we obtain
	\begin{equation*}
		\liminf_{n \to \infty} \lambda_{\min}(\asympConMat) > 0.
	\end{equation*}
	This establishes \eqref{eq:posEigVals}.
    \smallskip
    
 We move on to the proof of the asymptotic normality of the ML estimator. We start by assuming that
	\begin{equation}
		\sqrt{n} \asympConMat^{1/2} (\thetaML - \theta_0) \ \cancel{\xrightarrow[n \to \infty]{\negloglikeFun}} \ \mathcal{N}(0, I_p).
		\label{eq:negAsympNormality}
	\end{equation}
	Then there exists a bounded measurable function $g : \mathbb{R}^p \to \mathbb{R}$, $\xi > 0$, and a subsequence $\varphi(n)$ such that along $\varphi(n)$ we have
	\begin{equation*}
		\left|\expect\left(g\left(\sqrt{\varphi(n)} M_{\varphi(n)}^{1/2} (\widehat{\theta}_{\varphi(n)} - \theta_0) \right)\right) - \expect(g(U))\right| \geq \xi,
	\end{equation*}
	with $U \sim \mathcal{N}(0, I_p)$. In addition, by compactness and up to extracting another subsequence, we can assume that
	\begin{equation*}
		M_{\varphi(n)} \xrightarrow[n \to \infty]{\normalfont{a.s.}} M_\infty,
	\end{equation*}
	where $M_\infty$ is a symmetric positive definite matrix.
	
	Now the remaining of the proof is similar to the proof of Proposition 3.2 in~\cite{Bachoc2014AsymptAnalGPs}. Recall that for all $n$ and all $\theta$, we have
    \begin{equation*}
		\negloglikeFun_{n}(\theta) = \frac{1}{n} \ln|R_{\theta}| + \frac{1}{n} y^\top R_\theta^{-1} y,
	\end{equation*}
    then, for any $i \in \{1, \ldots, p\}$,
	\begin{align*}
		\frac{\partial \negloglikeFun_{n}}{\partial \theta_i}(\theta_0) 
		&= \frac{1}{n} \tr\bigg(R_{\theta_0}^{-1} \frac{\partial R}{\partial \theta_i}(\theta_0) \bigg) - \frac{1}{n}y^\top R_{\theta_0}^{-1} \frac{\partial R}{\partial \theta_i}(\theta_0) R_{\theta_0}^{-1} y.
	\end{align*}
	Hence, as in Proposition Appendix D.9 in~\cite{Bachoc2014AsymptAnalGPs}, we get that
	\begin{equation*}
		\sqrt{n} \nabla \negloglikeFun_{n} (\theta_0) \xrightarrow[n \to \infty]{\negloglikeFun} \mathcal{N}(0, 4 M_\infty).
	\end{equation*}
    For the sake of compactness and readability, we denote $\frac{\partial R_{\theta_0}}{\partial \theta_i} := \frac{\partial R}{\partial \theta_i}(\theta_0)$ and $\frac{\partial^2 R_{\theta_0}}{\partial \theta_i \partial \theta_j} = \frac{\partial^2 R}{\partial \theta_i \partial \theta_j} (\theta_0)$. Let $i,j \in \{1, \ldots, p\}$, then we have
	\begin{align*}
		\frac{\partial^2 \negloglikeFun_{n}}{\partial \theta_i \partial \theta_j}(\theta_0)
		=& \frac{1}{n} \tr\bigg(-R_{\theta_0}^{-1} \frac{\partial R_{\theta_0}}{\partial \theta_i} R_{\theta_0}^{-1} \frac{\partial R_{\theta_0}}{\partial \theta_j} + R_{\theta_0}^{-1} \frac{\partial^2 R_{\theta_0}}{\partial \theta_i \partial \theta_j} \bigg)
        \\
        &+ \frac{1}{n} y^\top R_{\theta_0}^{-1} \bigg( \frac{\partial R_{\theta_0}}{\partial \theta_i} R_{\theta_0}^{-1} \frac{\partial R_{\theta_0}}{\partial \theta_j} + \frac{\partial R_{\theta_0}}{\partial \theta_j} R_{\theta_0}^{-1} \frac{\partial R_{\theta_0}}{\partial \theta_i}
        -  \frac{\partial^2 R_{\theta_0}}{\partial \theta_i \partial \theta_j} \bigg) R_{\theta_0}^{-1} y.
	\end{align*}
        Thus, it follows from Condition~\ref{cond:infEingenVals}, Lemma \ref{lemma:lambdaMax} and Lemma~\ref{lemma:lambdaMaxDeriatives}, that 
        \begin{equation}
        \label{eq:conv_partial_2_var}
        \Var\left(\frac{\partial^2 \negloglikeFun_{n}}{\partial \theta_i \partial \theta_j}(\theta_0)\right) \xrightarrow[n \to \infty]{} 0.
        \end{equation}
        Moreover, we also have
	\begin{align*}
		\expect\left(\frac{\partial^2 \negloglikeFun_{n}}{\partial \theta_i \partial \theta_j}(\theta_0) \right)
		= & \frac{1}{n} \tr\left(-R_{\theta_0}^{-1} \frac{\partial R_{\theta_0}}{\partial \theta_i} R_{\theta_0}^{-1} \frac{\partial R_{\theta_0}}{\partial \theta_j} + R_{\theta_0}^{-1} \frac{\partial^2 R_{\theta_0}}{\partial \theta_i \partial \theta_j} \right)
		\\
        & + \frac{1}{n} \tr\bigg(2 R_{\theta_0}^{-1} \frac{\partial R_{\theta_0}}{\partial \theta_i} R_{\theta_0}^{-1} \frac{\partial R_{\theta_0}}{\partial \theta_j} -  R_{\theta_0}^{-1} \frac{\partial^2 R_{\theta_0}}{\partial \theta_i \partial \theta_j} \bigg)
		\\
		= & \frac{1}{n} \tr\left(R_{\theta_0}^{-1} \frac{\partial R_{\theta_0}}{\partial \theta_i} R_{\theta_0}^{-1} \frac{\partial R_{\theta_0}}{\partial \theta_j} \right)
		\\
		= & \left(2 \asympConMat\right)_{i,j}.
	\end{align*}
	This yields, using \eqref{eq:conv_partial_2_var}, the following convergence in probability 
 \begin{equation}
    \label{eq:conv_partial_2}
     \frac{\partial^2 \negloglikeFun_{n}}{\partial \theta_i \partial \theta_j}(\theta_0) \xrightarrow[n \to \infty]{\mathbb{P}} (2 M_\infty)_{i,j}.
 \end{equation}
	
	Computing the third-order derivatives of the modified negative log-likelihood, we obtain
	\begin{align*}
		\frac{\partial^3 \negloglikeFun_{n}}{\partial \theta_i \partial \theta_j \partial \theta_k}(\theta_0)
		=& \frac{1}{n} \tr\left( \frac{\partial}{\partial \theta_k} \left(-R_{\theta}^{-1} \frac{\partial R_{\theta}}{\partial \theta_i} R_{\theta}^{-1} \frac{\partial R_{\theta}}{\partial \theta_j} + R_{\theta}^{-1} \frac{\partial^2 R_{\theta}}{\partial \theta_i \partial \theta_j} \right)\right)\bigg|_{\theta = \theta_0} 
		\\
		&+ \frac{1}{n} y^\top \bigg(\frac{\partial}{\partial \theta_k} \bigg[R_{\theta}^{-1} \frac{\partial R_{\theta}}{\partial \theta_i} R_{\theta}^{-1} \frac{\partial R_{\theta}}{\partial \theta_j}R_{\theta}^{-1}  + R_{\theta}^{-1} \frac{\partial R_{\theta}}{\partial \theta_j} R_{\theta}^{-1} \frac{\partial R_{\theta}}{\partial \theta_i} R_{\theta}^{-1} \\
        & \hspace{15ex} -   R_{\theta}^{-1} \frac{\partial^2 R_{\theta}}{\partial \theta_i \partial \theta_j} R_{\theta}^{-1} \bigg]_{\theta = \theta_0} \bigg) y
		\\
		= & \frac{1}{n} \tr(A_{\theta_0}) + \frac{1}{n} y^\top B_{\theta_0} y,
	\end{align*}
	where $A_{\theta_0}$ and $B_{\theta_0}$ are sums of products of the matrices $R_{\theta_0}^{-1}$ or $\frac{\partial R}{\partial\theta_{i_1}\cdots \partial\theta_{i_q}} (\theta_0)$ with $q \in \{0, \ldots, 3\}$ and  $i_1, \ldots, i_q \in \{1, \ldots, p\}$. Hence, from Condition \ref{cond:infEingenVals} and from Lemmas~\ref{lemma:lambdaMax}, and~\ref{lemma:lambdaMaxDeriatives}, we have
	\begin{equation*}
		\sup_{\theta \in \Theta} \left| \frac{\partial^3 \negloglikeFun_{n}}{\partial \theta_i \partial \theta_j \partial \theta_k}(\theta) \right| = O_{\mathbb{P}}(1).
	\end{equation*}
	Using \eqref{eq:conv_partial_2} and applying Proposition D.10 in~\cite{Bachoc2014AsymptAnalGPs}, we can show that
	\begin{equation*}
		\sqrt{\varphi(n)} (\widehat{\theta}_{\varphi(n)} - \theta_0) \xrightarrow[n \to \infty]{\negloglikeFun} \mathcal{N}(0, M_\infty^{-1}).
	\end{equation*}
	Since $M_{\varphi(n)} \xrightarrow[n \to \infty]{\normalfont{a.s.}} M_{\infty}$, we have
	\begin{equation*}
		\sqrt{\varphi(n)} M_{\varphi(n)}^{1/2}(\widehat{\theta}_{\varphi(n)} - \theta_0) \xrightarrow[n \to \infty]{\negloglikeFun} \mathcal{N}(0, I_p),
	\end{equation*}
	This is a contradiction with \eqref{eq:negAsympNormality}; which concludes the proof.
\end{proof}



\section{Asymptotic results with noisy inputs}
\label{sec:asympPropertiesApprox}

In practical scenarios, complete knowledge of the functional inputs is often unattainable since only an approximate version of the input data is accessible. This approximation can be acquired through conventional sampling methods or through more advanced techniques such as projecting the input functions onto a fixed functional basis~\cite{Ramsay2005functional}. It is noteworthy that, when presenting numerical examples of certain asymptotic results for functional data, authors often tend to implicitly approximate their functional data using a sufficiently large sample vector. However, solely considering non-negligible approximation errors may render asymptotic results impractical. Thus, it is worthwhile to adapt the theorems in Section~\ref{sec:asympProperties} to ensure that the numerical approximation of the inputs does not affect the convergence results. 
\\
\\
To the best of our knowledge, this paper contains the first examination of the convergence of the ML estimator for approximated functional inputs within the GP framework. On a related topic for vector inputs,~\cite{furrer2023asymptotic} examines the impact of covariance approximations. This more in-depth analysis of the asymptotic results for the approximated data has revealed, in certain cases, some non-trivial conditions for the convergence of the estimator to occur.
\\
\\
The proofs of intermediate lemmas are outlined in Appendix~\ref{app:proofLemmasApprox}

\subsection{Notation and sufficient conditions}
\label{sec:asympPropertiesApprox:subsec:conditions}
Throughout this section, we use the notation introduced in Section~\ref{sec:asympProperties}. Given our focus on the asymptotic properties of approximated data, we consider some approximations of the input data $(f_i)_{i = 1,\ldots, n}$. For any $i \in \mathbb{N}$ and any $N \in \mathbb{N}$, we denote by $f_i^N$ the approximated version of $f_i$.
Similar to the case of noise-free inputs, our interest lies in studying the modified negative log Gaussian likelihood associated with the approximated data:
\begin{equation}
    \negloglikeFun_{n,N}(\theta) = \frac{1}{n} \ln|R_{N, \theta}| + \frac{1}{n} y^\top R_{N, \theta}^{-1} y,
    \label{eq:negLogGaussianLikeApprox}
\end{equation}
with $R_N(\theta) = R_{N, \theta} := (K_\theta(f_i^N, f_j^N))_{1 \leq i,j \leq n}$ and $|R_{N, \theta}|$ the determinant of $R_{N, \theta}$. Then, the ML estimator is given by $\thetaMLapprox \in \argmin_{\theta \in \Theta} \negloglikeFun_{n, N}(\theta)$. 
\medskip

For the asymptotic results to hold for approximated data, we will require that the approximation order $N$ satisfies Condition \ref{cond:approx:inputs}.
\begin{Condition} \label{cond:approx:inputs}
	There exists a sequence $N_n$ such that as $n \to \infty$, we have
	\[
	\max_{i = 1, \ldots, n}
	\| 
	f_i - f^{N_n}_i
	\|
	\xrightarrow[n \to \infty]{} 0.
	\]
\end{Condition}
In particular, it should be noted that this condition implies that the order of approximation $N$ must depend on the number of observations $n$ for convergence to hold in a general setting.

Additionally, for  Condition \ref{cond:covFunctCondition} to be correctly passed down to the approximated data, we will require some local control over the rate of change of the covariance function. This control is detailed in Condition~\ref{cond:derivative:t}
\begin{Condition} \label{cond:derivative:t}
	Keeping the same constant $\gamma_0$ as in Condition \ref{cond:covFunctCondition}, there exists another constant $\rho_0 > 0$ such that for all $f,g \in \functionSpace$ satisfying $0 <\|f - g\|  \leq \rho_0$, we have
	\[
	\sup_{\theta \in \Theta} 
	\frac{
		\left|K_{\theta}(f) - K_{\theta}(g)
	\right| }{\|f - g\|} 
	\ \triangleleft \ \frac{1}{1+\min(\|f\|,\|g\|)^{\gamma_0} }.
	\]
\end{Condition}

To obtain the asymptotic normality of the ML estimator for exact data, i.e. Theorem~\ref{theo:normality}, we also required some control over the partial derivatives of orders two and three of the function $K_{\theta}$ (see Condition~\ref{cond:d3covFunctCondition}). In order to pass down this condition to the approximated data we will require, in the same manner as Condition \ref{cond:derivative:t}, a local control over the rate of change of the partial derivatives of the covariance function.
\begin{Condition} \label{cond:derivative:t:der:theta}
	Assume that Conditions \ref{cond:dcovFunctCondition} holds and keep the same constant $\gamma_1 > 0$. Then there exists another constant $\rho_1 > 0$ such that for all $i \in \{1, \ldots, p\}$, and for all $f,g \in \functionSpace$ such that $0 < \|f-g\| \leq \rho_1$, we have
	\begin{equation*}
		\sup_{\theta \in \Theta}
		\frac{1}{\|f - g\|}
		\left|
		\frac{\partial K_\theta}{\partial \theta_{i}} (f)
		-
		\frac{\partial K_\theta}{\partial \theta_{i}}(g)
		\right| \ \triangleleft \ \frac{1}{1+\min(\|f\|,\|g\|)^{\gamma_1} }.
	\end{equation*}
\end{Condition}

\begin{Condition} \label{cond:derivative3:t:der:theta}
	Assume that Conditions \ref{cond:d3covFunctCondition} holds and keep the same constant $\gamma_2 > 0$. Then there exists another constant $\rho_2 > 0$ such that for all $q \in \{2,3\}$, for all $i_1, \ldots, i_q \in \{1, \ldots, p\}$, and for all $f,g \in \functionSpace$ such that $0 < \|f-g\| \leq \rho_2$, we have
	\begin{equation*}
		\sup_{\theta \in \Theta}
		\frac{1}{\|f - g\|}
		\left|
		\frac{\partial^q K_\theta}{\partial \theta_{i_1}\cdots \partial \theta_{i_q}} (f)
		-
		\frac{\partial^q K_\theta}{\partial \theta_{i_1}\cdots \partial \theta_{i_q}}(g)
		\right| \ \triangleleft \ \frac{1}{1+\min(\|f\|,\|g\|)^{\gamma_2} }.
	\end{equation*}
\end{Condition}

\paragraph{The case of isotropic processes.}
As in Section~\ref{sec:asympProperties}, we can give some simplified versions of Conditions \ref{cond:derivative:t}, \ref{cond:derivative:t:der:theta} and \ref{cond:derivative3:t:der:theta} whenever the underlying process is assumed to be isotropic. The adaptations of these conditions are given in Appendix~\ref{app:CondIsotropy} (Conditions~\ref{cond:derivative:t:Iso}, \ref{cond:derivative:t:der:theta:Iso}, and \ref{cond:derivative3:t:der:theta:Iso} respectively).

\subsection{Asymptotic consistency}
\label{sec:asympPropertiesApprox:subsec:consistency}
We start by establishing the consistency of the ML estimator. To prove Theorem~\ref{theo:consistencyApprox}, we make use of the technical Lemma~\ref{lemma:lambdaMaxApprox} which will allow us to control $\lambda_{\max}(R_{N_n,\theta} - R_{\theta} )$.
\begin{lemma} 
	\label{lemma:lambdaMaxApprox}
	Assume that Conditions~\ref{cond:distant}, \ref{cond:orthormalBasis}, and~\ref{cond:approx:inputs} hold. If Condition~\ref{cond:derivative:t} is verified, then 
	$$\sup_{\theta \in \Theta}\lambda_{\max }\left(R_{\theta} - R_{N_n,\theta}\right) \xrightarrow[\substack{n \to \infty} ]{} 0.$$
 If Condition~\ref{cond:derivative:t:der:theta} is verified, then we have
    $$\sup_{\theta \in \Theta}\max_{i=1,\ldots,p}\lambda_{\max}\left(\frac{\partial}{\partial \theta_{i}} (R_{\theta}-R_{N_n,\theta})\right) \xrightarrow[\substack{n \to \infty }]{} 0.$$
\end{lemma}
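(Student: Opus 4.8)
The plan is to control $\lambda_{\max}$ of the symmetric difference matrix by its maximal absolute row sum (Gershgorin's circle theorem), and to estimate each entry by combining the approximation rate of Condition~\ref{cond:approx:inputs} with the local Lipschitz-type bounds of Conditions~\ref{cond:derivative:t} and~\ref{cond:derivative:t:der:theta}. Write $\varepsilon_n := \max_{1 \le i \le n}\|f_i - f_i^{N_n}\|$, which tends to $0$ by Condition~\ref{cond:approx:inputs}. The first thing I would note is that the diagonal entries of $R_\theta - R_{N_n,\theta}$ vanish: by stationarity $K_\theta(f_i,f_i) = K_\theta(0) = K_\theta(f_i^{N_n},f_i^{N_n})$, and the same holds for $\partial_{\theta_i} K_\theta$, so the diagonal of $\partial_{\theta_i}(R_\theta - R_{N_n,\theta})$ vanishes as well. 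Consequently Gershgorin gives $\bigl|\lambda_{\max}(R_\theta - R_{N_n,\theta})\bigr| \le \max_{j} \sum_{i \ne j} \bigl|K_\theta(f_i,f_j) - K_\theta(f_i^{N_n},f_j^{N_n})\bigr|$, and it is enough to show the right-hand side tends to $0$ uniformly in $\theta$.

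For the off-diagonal estimate I would fix $i \ne j$, set $f = f_i - f_j$ and $g = f_i^{N_n} - f_j^{N_n}$, and use $\|f-g\| \le 2\varepsilon_n$ together with $g_{ij} := \|f_i - f_j\| \ge 2\Delta$ (Condition~\ref{cond:distant}). For $n$ large enough that $2\varepsilon_n \le \min(\rho_0,\Delta)$ we get $0 \le \|f-g\| \le \rho_0$ and $\min(\|f\|,\|g\|) \ge g_{ij} - 2\varepsilon_n \ge g_{ij}/2 > 0$, so (discarding the trivial case $f=g$) Condition~\ref{cond:derivative:t} yields
\[
\sup_{\theta \in \Theta}\bigl|K_\theta(f_i,f_j) - K_\theta(f_i^{N_n},f_j^{N_n})\bigr| \ \triangleleft\ \frac{2\varepsilon_n}{1 + (g_{ij}/2)^{\gamma_0}} \ \triangleleft\ \frac{\varepsilon_n}{1 + g_{ij}^{\gamma_0}}.
\]
Summing over $i$, it then remains to check that $\max_{j \le n}\sum_{i \ne j}(1 + g_{ij}^{\gamma_0})^{-1}$ stays bounded as $n \to \infty$; this is precisely the combinatorial bound already carried out inside the proof of Lemma~\ref{lemma:S} — one passes to the $d_0$-dimensional projections supplied by Lemma~\ref{lemma:existd}, applies Lemma~\ref{lemma:A} to the non-increasing function $t \mapsto (1+t^{\gamma_0})^{-1}$, and uses $\gamma_0 > d_0$ (from Condition~\ref{cond:covFunctCondition}) to sum the resulting series. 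This gives $\sup_{\theta \in \Theta}\lambda_{\max}(R_\theta - R_{N_n,\theta}) \triangleleft \varepsilon_n \to 0$, the matching lower bound $\ge 0$ being automatic since the diagonal is zero. The second assertion follows by the identical argument with $K_\theta$ replaced by $\partial_{\theta_i} K_\theta$, Condition~\ref{cond:derivative:t:der:theta} replacing Condition~\ref{cond:derivative:t}, the exponent $\gamma_1 > d_0$ (from Condition~\ref{cond:dcovFunctCondition}) replacing $\gamma_0$, and the boundedness of $\max_j\sum_{i\ne j}(1 + g_{ij}^{\gamma_1})^{-1}$ being the estimate underlying Lemma~\ref{lemma:S'}; a maximum over the finitely many $i \in \{1,\dots,p\}$ preserves the convergence.

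I expect the only genuine subtlety to be the threshold bookkeeping: one must take $n$ large enough that $2\varepsilon_n$ is simultaneously below the locality radius $\rho_0$ (resp.\ $\rho_1$, $\rho_2$) and below $\Delta$, so that the Lipschitz-type inequalities are applicable \emph{and} the perturbed distances stay comparable to $g_{ij}$, keeping the denominators uniformly comparable to $1 + g_{ij}^{\gamma_0}$. Beyond that point everything reduces to invoking the row-sum and packing estimates already established in Section~\ref{sec:asympProperties}, so no genuinely new combinatorics is needed.
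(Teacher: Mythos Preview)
Your proposal is correct and follows essentially the same route as the paper's proof: bound $\lambda_{\max}$ by the maximal absolute row sum via Gershgorin, apply the local Lipschitz bound of Condition~\ref{cond:derivative:t} (resp.\ Condition~\ref{cond:derivative:t:der:theta}) to each entry, and then invoke the packing estimate underlying Lemma~\ref{lemma:S} (resp.\ Lemma~\ref{lemma:S'}) to sum the resulting $(1+g_{ij}^{\gamma})^{-1}$ terms. Your explicit remarks that the diagonal vanishes and that one must take $n$ large enough so that $2\varepsilon_n \le \min(\rho_0,\Delta)$ are slightly more careful than the paper's version, but the argument is the same; the stray mention of $\rho_2$ in your closing paragraph belongs to Lemma~\ref{lemma:lambdaMaxApprox'} rather than to the present lemma.
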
	
Next, we introduce Theorem \ref{theo:consistencyApprox}. As mentioned before, this theorem guarantees that the numerical results obtained using only an approximation of the input functions will still converge correctly.
\begin{theorem}
	\label{theo:consistencyApprox}
	Let $\thetaMLapprox \in \Theta$ be the maximum likelihood estimator obtained by minimizing the negative log Gaussian likelihood of the approximated data in~\eqref{eq:negLogGaussianLikeApprox}. Under Conditions~\ref{cond:distant} to~\ref{cond:diffK}, as well as
 Condition \ref{cond:dcovFunctCondition},
 Condition \ref{cond:approx:inputs}, Condition~\ref{cond:derivative:t} and Condition~\ref{cond:derivative:t:der:theta}, we have
	\begin{equation*}
		\thetaMLapproxNn \xrightarrow[\substack{n \to \infty }]{\mathbb{P}} \theta_0.
	\end{equation*}
\end{theorem}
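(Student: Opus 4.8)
The plan is to mirror the structure of the proof of Theorem~\ref{theo:consistency}, replacing the exact covariance matrices $R_\theta$ by their approximated counterparts $R_{N_n,\theta}$ and carefully propagating the approximation error through each step. The key observation enabling this is Lemma~\ref{lemma:lambdaMaxApprox}, which states that $\sup_{\theta \in \Theta}\lambda_{\max}(R_\theta - R_{N_n,\theta}) \to 0$ as $n \to \infty$ (and similarly for the first-order derivatives). First I would note that, since by Lemma~\ref{lemma:lambdaMax} the quantity $\sup_{\theta \in \Theta}\lambda_{\max}(R_\theta)$ is bounded in $n$, the triangle inequality combined with Lemma~\ref{lemma:lambdaMaxApprox} gives that $\sup_{\theta \in \Theta}\lambda_{\max}(R_{N_n,\theta})$ is also bounded in $n$. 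Similarly, since $\lambda_{\min}(R_\theta) \geq c > 0$ by Condition~\ref{cond:infEingenVals}, and since $\lambda_{\min}(R_{N_n,\theta}) \geq \lambda_{\min}(R_\theta) - \lambda_{\max}(R_\theta - R_{N_n,\theta}) \geq c - o(1)$, the smallest eigenvalue of $R_{N_n,\theta}$ is bounded away from zero uniformly in $\theta$ for $n$ large enough. The same reasoning applied via the derivative part of Lemma~\ref{lemma:lambdaMaxApprox} controls $\sup_{\theta\in\Theta}\max_i\lambda_{\max}(\partial R_{N_n}/\partial\theta_i(\theta))$.

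Next I would establish the uniform convergence
\[
\sup_{\theta \in \Theta}\left|\negloglikeFun_{n,N_n}(\theta) - \expect(\negloglikeFun_{n,N_n}(\theta))\right| \xrightarrow[n\to\infty]{\mathbb{P}} 0,
\]
where the expectation is taken under the true model, i.e. with $y \sim \mathcal{N}(0,R_{\theta_0})$ (note: the data $y$ still comes from the \emph{exact} GP, so the expectation does \emph{not} simplify as in Theorem~\ref{theo:consistency}). Following the same scheme: the variance of $\negloglikeFun_{n,N_n}(\theta)$ equals $\frac{2}{n^2}\tr(R_{N_n,\theta}^{-1}R_{\theta_0}R_{N_n,\theta}^{-1}R_{\theta_0})$, which is $O(1/n) \to 0$ using the uniform bounds just derived; hence pointwise convergence in probability holds. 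The gradient bound $\sup_{\theta\in\Theta}\max_i|\partial\negloglikeFun_{n,N_n}/\partial\theta_i(\theta)| = O_{\mathbb{P}}(1)$ follows from the expression of the derivative together with Condition~\ref{cond:infEingenVals}-type bounds and the boundedness of $\lambda_{\max}(\partial R_{N_n}/\partial\theta_i(\theta))$. Then Lemma~\ref{lemma:consitencyLike} yields the uniform convergence.

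The main obstacle — and the step requiring the most care — is obtaining the lower-bound analogue of~\eqref{eq:lower_bound}, namely showing that for some positive constant,
\[
\expect(\negloglikeFun_{n,N_n}(\theta) - \negloglikeFun_{n,N_n}(\theta_0)) \triangleright \left\|R_\theta - R_{\theta_0}\right\|_2^2
\]
(with the \emph{exact} matrices on the right), so that Condition~\ref{cond:diffK} can be invoked. Here the subtlety is that, because the data are generated from $R_{\theta_0}$ but the likelihood uses $R_{N_n,\theta}$, we have $\expect(\negloglikeFun_{n,N_n}(\theta)) = \frac{1}{n}\ln|R_{N_n,\theta}| + \frac{1}{n}\tr(R_{N_n,\theta}^{-1}R_{\theta_0})$, which does not equal a sum of $\lambda_i - \ln\lambda_i - 1$ terms. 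The fix is to decompose
\[
\expect(\negloglikeFun_{n,N_n}(\theta) - \negloglikeFun_{n,N_n}(\theta_0)) = \expect(\negloglikeFun_{n,N_n}(\theta) - \negloglikeFun_{n}(\theta)) + \expect(\negloglikeFun_{n}(\theta) - \negloglikeFun_{n}(\theta_0)) + \expect(\negloglikeFun_{n}(\theta_0) - \negloglikeFun_{n,N_n}(\theta_0)),
\]
bound the middle term below by $\triangleright\|R_\theta - R_{\theta_0}\|_2^2$ exactly as in Theorem~\ref{theo:consistency}, and show the two outer ``approximation'' terms are $o(1)$ uniformly in $\theta$. For the latter, one writes $\negloglikeFun_{n,N_n}(\theta) - \negloglikeFun_n(\theta) = \frac{1}{n}\ln|R_{N_n,\theta}R_\theta^{-1}| + \frac{1}{n}\tr((R_{N_n,\theta}^{-1} - R_\theta^{-1})R_{\theta_0})$ and controls each piece: the log-determinant term is $\frac{1}{n}\sum_i \ln\mu_i$ where the $\mu_i$ are eigenvalues of $R_{N_n,\theta}R_\theta^{-1}$, all within $1 \pm o(1)$ of one by Lemma~\ref{lemma:lambdaMaxApprox} and the lower eigenvalue bounds, so $\ln\mu_i = O(|\mu_i - 1|) = o(1)$; and $R_{N_n,\theta}^{-1} - R_\theta^{-1} = R_{N_n,\theta}^{-1}(R_\theta - R_{N_n,\theta})R_\theta^{-1}$ has operator norm $o(1)$, while $\frac{1}{n}\tr(\cdot\, R_{\theta_0})$ is bounded by the operator norm times $\frac{1}{n}\tr(R_{\theta_0}) = O(1)$. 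Combining, $\sup_\theta|\expect(\negloglikeFun_{n,N_n}(\theta) - \negloglikeFun_n(\theta))| \to 0$, and similarly for the $\theta_0$ term. Putting the three ingredients together — uniform convergence of $\negloglikeFun_{n,N_n}$ to its mean, the lower bound on the mean gap, and Condition~\ref{cond:diffK} — gives $\mathbb{P}(\|\thetaMLapproxNn - \theta_0\| > \epsilon) \to 0$ for every $\epsilon > 0$, exactly as at the end of the proof of Theorem~\ref{theo:consistency}, which is the claimed convergence.
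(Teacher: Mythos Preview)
Your proposal is correct, and it follows the same overall skeleton as the paper (transfer eigenvalue bounds via Lemma~\ref{lemma:lambdaMaxApprox}, then rerun the proof of Theorem~\ref{theo:consistency}), but you handle the key lower-bound step differently. The paper's proof simply observes that Lemma~\ref{lemma:lambdaMaxApprox} allows one to pass Conditions~\ref{cond:infEingenVals} and~\ref{cond:diffK} to the approximated matrices $R_{N_n,\theta}$ and then says ``follow the proof of Theorem~\ref{theo:consistency}'', i.e.\ it works directly with the spectrum of $R_{N_n,\theta}^{-1}R_{N_n,\theta_0}$ and uses the extended identifiability condition $\liminf_n \inf_{\|\theta-\theta_0\|\geq\alpha}\|R_{N_n,\theta}-R_{N_n,\theta_0}\|_2^2>0$. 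By contrast, you keep the original Condition~\ref{cond:diffK} on the exact matrices and instead prove that $\sup_\theta|\expect(\negloglikeFun_{n,N_n}(\theta)-\negloglikeFun_n(\theta))|\to 0$, splitting the expected-likelihood gap into an exact-data piece plus two uniformly vanishing approximation pieces. Your route is slightly longer but more explicit about the mismatch between the data covariance $R_{\theta_0}$ and the matrix $R_{N_n,\theta_0}$ appearing in the approximated likelihood at $\theta_0$; the paper's terse ``follow the proof'' leaves that small correction implicit. Either way the same eigenvalue controls (Lemmas~\ref{lemma:lambdaMax} and~\ref{lemma:lambdaMaxApprox}) do all the work.
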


\begin{proof}[Proof of Theorem \ref{theo:consistencyApprox}]
	
	Lemma~\ref{lemma:lambdaMaxApprox} enables us to extend Conditions~		\ref{cond:infEingenVals} and \ref{cond:diffK} to noisy inputs:
	\begin{equation*}
		\sup_{\theta \in \Theta}\lambda_{\min}(R_{N_n, \theta}) \geq c, \qquad c > 0,
	\end{equation*}
	\begin{equation*}
		\liminf_{n \to \infty} \inf_{\|\theta-\theta_0\| \geq \alpha} \frac{1}{n} \sum_{i,j = 1}^{n} [K_{\theta}(f_i^{N_n},f_j^{N_n})-K_{\theta_0}(f_i^{N_n},f_j^{N_n})]^2 > 0, \qquad \forall \alpha >0.
	\end{equation*}
	From Lemmas~\ref{lemma:lambdaMax} and \ref{lemma:lambdaMaxApprox}, we have that 
	\begin{equation*}
		\sup_{\theta \in \Theta} \lambda_{\max} (R_{N_n, \theta})
		\quad \mbox{and} \quad
		\sup_{\theta \in \Theta} \max_{i = 1, \ldots, p} \lambda_{\max} \left(\frac{\partial R_{N_n}}{\partial \theta_i}(\theta) \right)
	\end{equation*}
	are bounded as $n \to \infty$. Then, we can follow the proof of Theorem~\ref{theo:consistency} to conclude the proof.
\end{proof}

\subsection{Asymptotic normality}
\label{sec:asympPropertiesApprox:subsec:normality}
For the asymptotic normality to hold, one requires some control over the partial derivatives of orders two and three of the modified negative log-likelihood. To pass down this property to the approximate data case, we will need to impose control over the rate of change of said derivatives; this will be done thanks to Conditions~\ref{cond:derivative:t:der:theta} and \ref{cond:derivative3:t:der:theta} through Lemma~\ref{lemma:lambdaMaxApprox'}.
\begin{lemma}
\label{lemma:lambdaMaxApprox'}
    Assume that Conditions~\ref{cond:distant}, \ref{cond:orthormalBasis}, \ref{cond:approx:inputs}, \ref{cond:derivative:t:der:theta}, and~\ref{cond:derivative3:t:der:theta} hold. Then for $q \in \{1,2,3\}$, $i_1, \ldots, i_q \in \{1,\ldots,p\}$, we have
	$$\sup_{\theta \in \Theta}\max_{i=1,\ldots,p}\lambda_{\max}\left(\frac{\partial}{\partial \theta_{i_1}}\cdots \frac{\partial}{\partial \theta_{i_q}} (R_{\theta}-R_{N_n,\theta})\right) \xrightarrow[\substack{n \to \infty }]{} 0.$$
\end{lemma}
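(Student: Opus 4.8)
The plan is to reduce the statement, for each fixed $q\in\{1,2,3\}$, to a bound on the maximum absolute row sum of the entrywise difference of the two matrices, exactly as in the proof of Lemma~\ref{lemma:lambdaMaxApprox}. Concretely, for any symmetric matrix $A$ one has $\lambda_{\max}(A)\le\max_{j}\sum_{i}|A_{ij}|$ (Gershgorin), so it suffices to show that
\[
\sup_{\theta\in\Theta}\ \max_{j=1,\ldots,n}\ \sum_{i=1}^{n}\left|\frac{\partial^q K_\theta}{\partial\theta_{i_1}\cdots\partial\theta_{i_q}}(f_i-f_j)-\frac{\partial^q K_\theta}{\partial\theta_{i_1}\cdots\partial\theta_{i_q}}(f_i^{N_n}-f_j^{N_n})\right|\xrightarrow[n\to\infty]{}0
\]
for $q\in\{1,2,3\}$ (the case $q=1$ being already covered by the second part of Lemma~\ref{lemma:lambdaMaxApprox}, so really only $q\in\{2,3\}$ needs new work, but I would treat all of them uniformly). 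Here I write $g_{ij}=\|f_i-f_j\|$ and $g_{ij}^{N_n}=\|f_i^{N_n}-f_j^{N_n}\|$, and note $|g_{ij}-g_{ij}^{N_n}|\le\|f_i-f_i^{N_n}\|+\|f_j-f_j^{N_n}\|\le 2\varepsilon_n$ where $\varepsilon_n:=\max_{i\le n}\|f_i-f_i^{N_n}\|\to0$ by Condition~\ref{cond:approx:inputs}.

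The core of the argument is to split each row-sum term according to whether the pair $(i,j)$ is ``close'' or ``far''. First I would fix $n$ large enough that $2\varepsilon_n\le\min(\rho_2,\Delta)$ (using Conditions~\ref{cond:approx:inputs} and~\ref{cond:derivative3:t:der:theta}). For the difference term, apply Condition~\ref{cond:derivative3:t:der:theta} (resp.\ \ref{cond:derivative:t:der:theta} for $q=1$) with $f=f_i^{N_n}-f_j^{N_n}$ and $g=f_i-f_j$ — these differ in norm by at most $2\varepsilon_n\le\rho_2$ — to get
\[
\left|\frac{\partial^q K_\theta}{\partial\theta_{i_1}\cdots\partial\theta_{i_q}}(f_i-f_j)-\frac{\partial^q K_\theta}{\partial\theta_{i_1}\cdots\partial\theta_{i_q}}(f_i^{N_n}-f_j^{N_n})\right|\ \triangleleft\ \frac{\varepsilon_n}{1+\min(g_{ij},g_{ij}^{N_n})^{\gamma_2}}\ \triangleleft\ \frac{\varepsilon_n}{1+g_{ij}^{\gamma_2}}
\]
(for $i\ne j$; the diagonal term $i=j$ vanishes), where in the last step I used $g_{ij}^{N_n}\ge g_{ij}-2\varepsilon_n\ge g_{ij}-\Delta$, and that $g_{ij}\ge 2\Delta$ for $i\ne j$ by Condition~\ref{cond:distant}, so $g_{ij}^{N_n}\ge g_{ij}/2$ and $\min(g_{ij},g_{ij}^{N_n})^{\gamma_2}\gtrsim g_{ij}^{\gamma_2}$. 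Summing over $i$ and using Lemma~\ref{lemma:A} together with the separability Condition~\ref{cond:distant} — exactly the same counting (bounded number of points per unit shell, via the perturbed-grid/packing argument already invoked for Lemma~\ref{lemma:S'}) that shows $\sup_\theta\max_j\sum_i(1+g_{ij}^{\gamma_2})^{-1}$ is bounded in $n$ when $\gamma_2>d_0$ — gives the row-sum bound $\triangleleft\ \varepsilon_n$, uniformly in $\theta$ and $j$, which tends to $0$.

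The main obstacle, and the only place one must be slightly careful, is the transfer of the decay-in-$\|f\|$ bound from the \emph{approximated} configuration back to the \emph{original} one: Conditions~\ref{cond:derivative:t:der:theta}--\ref{cond:derivative3:t:der:theta} give decay in $\min(\|f\|,\|g\|)$, and one needs the approximated pairwise distances $g_{ij}^{N_n}$ to remain comparable to $g_{ij}$ so that the summability machinery of Lemmas~\ref{lemma:A}, \ref{lemma:S''} applies with the \emph{same} exponent $\gamma_2>d_0$ and the \emph{same} separation constant (up to a factor $1/2$). This is exactly why Condition~\ref{cond:distant} is needed here even though the lemma's hypotheses list it: it guarantees $g_{ij}\ge2\Delta$, hence for $n$ large $g_{ij}^{N_n}\ge\Delta$ and the approximated points are still $\Delta$-separated, so all the row-sum bounds used in the noise-free case hold verbatim for the $f_i^{N_n}$ as well. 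Once this comparability is in hand, the three cases $q=1,2,3$ are handled identically by invoking Condition~\ref{cond:derivative:t:der:theta} for $q=1$ and Condition~\ref{cond:derivative3:t:der:theta} for $q\in\{2,3\}$, and the proof concludes by letting $n\to\infty$.
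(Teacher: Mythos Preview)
Your proposal is correct and follows essentially the same approach as the paper. In fact, the paper does not give an explicit proof of Lemma~\ref{lemma:lambdaMaxApprox'}; it is understood to follow by the same argument as Lemma~\ref{lemma:lambdaMaxApprox} (Gershgorin, then Condition~\ref{cond:approx:inputs} to bound $\|(f_i-f_j)-(f_i^{N_n}-f_j^{N_n})\|\le 2b_n$, then Conditions~\ref{cond:derivative:t:der:theta}/\ref{cond:derivative3:t:der:theta}, then the inequality~\eqref{ineq:poly_decrease} to pass from $\min(g_{ij},g_{ij}^{N_n})$ back to $g_{ij}$, then Lemma~\ref{lemma:A}), and that is precisely what you have written out.
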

We now provide the asymptotic normality of the estimator for the approximated data.
\begin{theorem}
	\label{theo:normalityApprox}
	Let $\asympConMatapprox$ be the $p \times p$ asymptotic covariance matrix defined by
	\begin{equation*}
		(\asympConMatapprox)_{i,j} = \frac{1}{2n} \tr\bigg(R_{N,\theta_0}^{-1} \frac{\partial R_{N}}{\partial \theta_i}(\theta_0) R_{N,\theta_0}^{-1} \frac{\partial R_{N}}{\partial \theta_j}(\theta_0)\bigg).
	\end{equation*}
	Under Conditions~\ref{cond:distant} to~\ref{cond:d3covFunctCondition}, and Conditions~\ref{cond:approx:inputs} to~\ref{cond:derivative3:t:der:theta}, we have
	\begin{equation}
	    \label{eq:asympNormApprox}
		\sqrt{n} \asympConMatapproxNn^{1/2} (\thetaMLapproxNn - \theta_0) \xrightarrow[n \to \infty]{\negloglikeFun} \mathcal{N}(0, I_p). 
	\end{equation}
	Furthermore,
	\begin{equation}
		0 < \liminf_{n \to \infty} \lambda_{\min}(\asympConMatapproxNn) \leq \limsup_{n \to \infty} \lambda_{\max}(\asympConMatapproxNn) < \infty.
		\label{eq:posEigValsApprox}
	\end{equation}
\end{theorem}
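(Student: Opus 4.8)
The plan is to mirror the structure of the proof of Theorem~\ref{theo:normality} while replacing every exact matrix $R_\theta$ (and its $\theta$-derivatives) by its approximated counterpart $R_{N_n,\theta}$, showing at each step that the approximation error is asymptotically negligible thanks to Lemmas~\ref{lemma:lambdaMaxApprox} and~\ref{lemma:lambdaMaxApprox'}. First I would record the uniform spectral bounds for the approximated matrices: combining Lemma~\ref{lemma:lambdaMax}, Lemma~\ref{lemma:lambdaMaxDeriatives}, Lemma~\ref{lemma:lambdaMaxApprox'} and Condition~\ref{cond:infEingenVals} (extended to $R_{N_n,\theta}$ exactly as in the proof of Theorem~\ref{theo:consistencyApprox}), we get that $\sup_{\theta\in\Theta}\lambda_{\max}(R_{N_n,\theta})$, $\sup_\theta\max_{i}\lambda_{\max}(\partial_{\theta_i}R_{N_n,\theta})$, the analogous quantities for the second- and third-order derivatives, and $1/\inf_\theta\lambda_{\min}(R_{N_n,\theta})$ are all bounded in $n$ for $n$ large. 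These are exactly the ingredients needed to re-run the computations in the proof of Theorem~\ref{theo:normality} verbatim with $R_{N_n,\theta}$ in place of $R_\theta$.

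With these bounds in hand, the proof of~\eqref{eq:posEigValsApprox} goes through as before: the $\triangleright$-lower bound $\sum_{i,j}\lambda_i\lambda_j (\asympConMatapproxNn)_{i,j}\triangleright \|\sum_i \lambda_i \partial_{\theta_i}R_{N_n}(\theta_0)\|_2^2$ uses only Condition~\ref{cond:infEingenVals} for $R_{N_n,\theta_0}$, and then I would invoke Condition~\ref{cond:positivity} \emph{as transferred to the approximated inputs}. Concretely, $\frac1n\sum_{i,j}(\sum_k\lambda_k \partial_{\theta_k}K_\theta|_{\theta_0}(f_i^{N_n},f_j^{N_n}))^2$ differs from the exact quantity by a term controlled via Condition~\ref{cond:derivative:t:der:theta} and $\max_i\|f_i-f_i^{N_n}\|\to0$ (this is the same mechanism used to extend Condition~\ref{cond:diffK} in the proof of Theorem~\ref{theo:consistencyApprox}), so the $\liminf$ stays positive; the upper bound on $\lambda_{\max}(\asympConMatapproxNn)$ follows from the derivative bounds above. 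For the normality statement~\eqref{eq:asympNormApprox}, I would argue by contradiction exactly as in Theorem~\ref{theo:normality}: assume it fails, extract a subsequence $\varphi(n)$ along which some bounded test function separates, and, using compactness together with the boundedness of $(\asympConMatapproxNn)$, extract a further subsequence along which $M_{\varphi(n),N_{\varphi(n)}}\to M_\infty$ a.s.\ with $M_\infty$ symmetric positive definite (positive definiteness of the limit being guaranteed by~\eqref{eq:posEigValsApprox}). Then the identities for $\nabla\negloglikeFun_{n,N_n}(\theta_0)$, $\partial^2\negloglikeFun_{n,N_n}(\theta_0)$ and $\partial^3\negloglikeFun_{n,N_n}(\theta)$ have the same algebraic form as in the exact case, with $y^\top$ quadratic forms whose covariance is $R_{N_n,\theta_0}$-controlled; the variance of the second derivative tends to $0$ and its expectation equals $(2\asympConMatapproxNn)_{i,j}$, giving $\partial^2\negloglikeFun_{n,N_n}(\theta_0)\xrightarrow{\mathbb P}(2M_\infty)_{i,j}$, the third derivatives are $O_{\mathbb P}(1)$ uniformly in $\theta$, and $\sqrt{\varphi(n)}\nabla\negloglikeFun_{\varphi(n),N_{\varphi(n)}}(\theta_0)\xrightarrow{\negloglikeFun}\mathcal N(0,4M_\infty)$. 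Applying Propositions~D.9 and~D.10 of~\cite{Bachoc2014AsymptAnalGPs} then yields $\sqrt{\varphi(n)}M_{\varphi(n),N_{\varphi(n)}}^{1/2}(\thetaMLapproxNn-\theta_0)\xrightarrow{\negloglikeFun}\mathcal N(0,I_p)$ along the subsequence, contradicting the assumption.

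The one point that genuinely requires care — and which I expect to be the main obstacle — is verifying that the true observation vector $y$ (generated from the GP with the \emph{exact} covariance $R_{\theta_0}$) interacts correctly with the \emph{approximated} matrices appearing in $\negloglikeFun_{n,N_n}$. Unlike in Theorem~\ref{theo:normality}, the quadratic forms $y^\top(\cdots)y$ now mix $R_{\theta_0}$ (the law of $y$) with $R_{N_n,\theta_0}^{-1}$ and $\partial_{\theta_k}R_{N_n}(\theta_0)$; when computing means and variances one gets traces of products such as $R_{N_n,\theta_0}^{-1}(\partial R_{N_n})R_{N_n,\theta_0}^{-1} R_{\theta_0}$, and one must check that replacing $R_{\theta_0}$ by $R_{N_n,\theta_0}$ inside the relevant expectation identity for $\partial^2\negloglikeFun_{n,N_n}(\theta_0)$ produces a discrepancy that vanishes. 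This is where Lemma~\ref{lemma:lambdaMaxApprox'} (with $q=0,1,2$) and Lemma~\ref{lemma:lambdaMaxApprox} are used decisively: the discrepancy between each trace computed with $R_{\theta_0}$ and with $R_{N_n,\theta_0}$ is bounded by $\frac1n\tr$ of a product of uniformly bounded matrices times a factor $\lambda_{\max}(R_{\theta_0}-R_{N_n,\theta_0})\to0$ (or the analogous derivative differences), hence is $o(1)$. Once this bookkeeping is done, $\expect(\partial^2\negloglikeFun_{n,N_n}(\theta_0))=(2\asympConMatapproxNn)_{i,j}+o(1)$ and everything else falls into place; the rest of the argument is a faithful transcription of the proof of Theorem~\ref{theo:normality}.
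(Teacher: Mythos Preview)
Your overall strategy is exactly the paper's: its proof of Theorem~\ref{theo:normalityApprox} is a two-sentence pointer (``from Lemmas~\ref{lemma:lambdaMax} and~\ref{lemma:lambdaMaxApprox}, and Condition~\ref{cond:infEingenVals}, we show~\eqref{eq:posEigValsApprox} similarly as in the proof of Theorem~\ref{theo:normality}; the remaining arguments are similar''), and you have unpacked precisely that, invoking in addition Lemma~\ref{lemma:lambdaMaxApprox'} for the higher derivatives and transferring Conditions~\ref{cond:infEingenVals}, \ref{cond:diffK} and~\ref{cond:positivity} to the approximated inputs as in the proof of Theorem~\ref{theo:consistencyApprox}. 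You also put your finger on the one genuinely new wrinkle that the paper's proof does not mention at all: because $y\sim\mathcal N(0,R_{\theta_0})$ while the likelihood uses $R_{N_n,\theta}$, the trace identities from the exact case hold only up to correction terms involving $R_{\theta_0}-R_{N_n,\theta_0}$.

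That said, your handling of this wrinkle is complete for the Hessian but not for the score. For $\partial^2\negloglikeFun_{n,N_n}(\theta_0)$ the claim $\mathbb{E}[\cdot]=(2\asympConMatapproxNn)_{i,j}+o(1)$ is enough, since only convergence in probability is needed there. For the gradient, however, the relevant quantity is $\sqrt{n}\,\nabla\negloglikeFun_{n,N_n}(\theta_0)$, and its mean equals
\[
-\frac{1}{\sqrt{n}}\,\tr\!\Big(R_{N_n,\theta_0}^{-1}\,\frac{\partial R_{N_n}}{\partial\theta_i}(\theta_0)\,R_{N_n,\theta_0}^{-1}\,(R_{\theta_0}-R_{N_n,\theta_0})\Big).
\]
The mechanism you invoke (``$\tfrac{1}{n}\tr$ of a product of uniformly bounded matrices times $\lambda_{\max}(R_{\theta_0}-R_{N_n,\theta_0})\to 0$'') only bounds this by $O(\sqrt{n}\,b_n)$ with $b_n=\max_i\|f_i-f_i^{N_n}\|$, and Condition~\ref{cond:approx:inputs} gives $b_n\to 0$ but no rate. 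Since Proposition~D.9 of~\cite{Bachoc2014AsymptAnalGPs} yields the CLT for the \emph{centered} quadratic form, this bias term has to be controlled separately before you can conclude $\sqrt{n}\,\nabla\negloglikeFun_{n,N_n}(\theta_0)\xrightarrow{\negloglikeFun}\mathcal N(0,4M_\infty)$; ``everything else falls into place'' is too quick at exactly this point. The paper's terse proof is silent on this issue as well, so you are in line with it, but the step as you have written it is not justified by the argument you give.
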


\begin{proof}
	From Lemmas~\ref{lemma:lambdaMax} and~\ref{lemma:lambdaMaxApprox}, and Condition~\ref{cond:infEingenVals}, we show~\eqref{eq:posEigValsApprox} similarly as in the proof of Theorem~\ref{theo:normality}. The arguments of the remaining of the proof are similar to the ones considered in the proof of Theorem~\ref{theo:normality}.
\end{proof}
As observed in \eqref{eq:asympNormApprox}, the renormalization matrix $\asympConMatapproxNn$ depends on the approximation order $N_n$. This dependency on $N_n$ poses some challenges in computing confidence ensembles for the estimated parameters $\theta_0$. One way to solve this issue is to prove that the covariance matrix for the approximated data, that is $\asympConMatapproxNn$, is asymptotically close to the asymptotic covariance matrix $\asympConMat$ which is the aim of Lemma \ref{lemma:convConMat}.
\begin{lemma}
\label{lemma:convConMat}
Consider the same conditions as in Theorem~\ref{theo:normalityApprox}. We have the following convergence of the covariance matrix for the approximated data, denoted by $\asympConMatapproxNn$, toward the theoretical covariance matrix $\asympConMat$: as $n \xrightarrow[]{} \infty$,
    \[
    \asympConMatapproxNn - \asympConMat 
    \xrightarrow[n \to \infty]{} 0.
    \]
\end{lemma}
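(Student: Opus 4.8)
The plan is to express the difference $\asympConMatapproxNn - \asympConMat$ entrywise and bound each entry by a sum of traces of products of matrices, then show that replacing each occurrence of $R_{N_n,\theta_0}$ or its derivatives by the corresponding exact-data matrix produces a vanishing error. Concretely, for fixed $i,j\in\{1,\ldots,p\}$ I would write
\begin{equation*}
(\asympConMatapproxNn)_{i,j}-(\asympConMat)_{i,j}
=\frac{1}{2n}\Big[\tr\big(R_{N_n,\theta_0}^{-1} \tfrac{\partial R_{N_n}}{\partial\theta_i}(\theta_0) R_{N_n,\theta_0}^{-1} \tfrac{\partial R_{N_n}}{\partial\theta_j}(\theta_0)\big)
-\tr\big(R_{\theta_0}^{-1}\tfrac{\partial R}{\partial\theta_i}(\theta_0)R_{\theta_0}^{-1}\tfrac{\partial R}{\partial\theta_j}(\theta_0)\big)\Big],
\end{equation*}
and use a telescoping decomposition so that each term has exactly one factor of the form $A_{N_n}-A$, where $A$ is one of $R_{\theta_0}$, $\tfrac{\partial R}{\partial\theta_i}(\theta_0)$, $\tfrac{\partial R}{\partial\theta_j}(\theta_0)$, together with the identity $R_{N_n,\theta_0}^{-1}-R_{\theta_0}^{-1}=-R_{N_n,\theta_0}^{-1}(R_{N_n,\theta_0}-R_{\theta_0})R_{\theta_0}^{-1}$ for the inverse factors.

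Next I would control each telescoped term using the submultiplicativity bound $\frac{1}{n}|\tr(B_1\cdots B_m)|\le \big(\prod_{l}\lambda_{\max}(|B_l|)\big)$ — more precisely, $|\tr(CD)|\le \lambda_{\max}(|C|)\,\tr(|D|)$ type inequalities combined with the fact that traces of the relevant products are $O(n)$. The key inputs are: Condition~\ref{cond:infEingenVals} (giving $\lambda_{\max}(R_{\theta_0}^{-1})\le 1/c$, and by Lemma~\ref{lemma:lambdaMaxApprox} also $\lambda_{\max}(R_{N_n,\theta_0}^{-1})\le 1/c$ for $n$ large), Lemma~\ref{lemma:lambdaMax} and Lemma~\ref{lemma:lambdaMaxDeriatives} (uniform boundedness in $n$ of $\lambda_{\max}$ of $R_{\theta_0}$ and its derivatives up to order three), the analogous bounds for the approximated matrices obtained by combining those lemmas with Lemmas~\ref{lemma:lambdaMaxApprox} and~\ref{lemma:lambdaMaxApprox'}, and — crucially — Lemmas~\ref{lemma:lambdaMaxApprox} and~\ref{lemma:lambdaMaxApprox'} themselves, which give $\lambda_{\max}(R_{\theta_0}-R_{N_n,\theta_0})\to 0$ and $\lambda_{\max}\big(\tfrac{\partial}{\partial\theta_i}(R_{\theta_0}-R_{N_n,\theta_0})\big)\to 0$. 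Each telescoped term is then bounded by (a uniformly bounded constant) times ($\lambda_{\max}$ of one "difference" matrix), hence tends to $0$; summing the finitely many terms and the finitely many pairs $(i,j)$ gives $\asympConMatapproxNn-\asympConMat\to 0$.

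The main obstacle is bookkeeping rather than any deep idea: one must be careful that the normalization $\tfrac{1}{n}$ is absorbed correctly — the trace of a product of $k$ matrices each with uniformly bounded $\lambda_{\max}$ is $O(n)$, so after dividing by $n$ the expression is $O(1)$, and replacing one factor by a difference matrix whose spectral radius vanishes makes the whole term $o(1)$. The cleanest way to phrase this is via the elementary inequality $|\tr(M_1M_2\cdots M_k)|\le n\,\prod_{l=1}^k \lambda_{\max}(|M_l|)$ valid for symmetric $M_l$ (and a routine extension to non-symmetric factors through $\|M\|_{\mathrm{op}}$ and $|\tr(AB)|\le \|A\|_{\mathrm{op}}\tr(|B|)$), which I would invoke without reproving. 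I would also note that the derivatives of $R_{N_n}$ of order one appearing in $\asympConMatapproxNn$ are uniformly bounded in operator norm for large $n$ because $\tfrac{\partial R_{N_n}}{\partial\theta_i}(\theta_0)=\tfrac{\partial R}{\partial\theta_i}(\theta_0)-\tfrac{\partial}{\partial\theta_i}(R_{\theta_0}-R_{N_n,\theta_0})$, with the first term bounded by Lemma~\ref{lemma:lambdaMaxDeriatives} and the second vanishing by Lemma~\ref{lemma:lambdaMaxApprox'}. Assembling these estimates concludes the proof.
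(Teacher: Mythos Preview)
Your proposal is correct and follows essentially the same approach as the paper: a telescoping decomposition of $A_nB_nA_nC_n-ABAC$ into four terms, each containing exactly one ``difference'' factor, followed by bounds relying on the same ingredients (Condition~\ref{cond:infEingenVals} and Lemmas~\ref{lemma:lambdaMax}, \ref{lemma:lambdaMaxApprox}, \ref{lemma:lambdaMaxApprox'}). The only difference is cosmetic --- the paper controls each trace term via the Cauchy--Schwarz inequality in the normalized Frobenius norm $\|R\|_2^2=\tfrac1n\sum_{i,j}R_{i,j}^2$, whereas you use operator-norm bounds of the form $\tfrac1n|\tr(M_1\cdots M_k)|\le\prod_l\|M_l\|_{\mathrm{op}}$; both routes give the same $o(1)$ conclusion.
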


Using Lemma~\ref{lemma:convConMat}, we can obtain an extension of Theorem~\ref{theo:normalityApprox} renormalized using the theoretical covariance matrix $\asympConMat$ instead.
\begin{corollary}
\label{theo:normalityApprox2}
Under the hypothesis of Theorem~\ref{theo:normalityApprox}, we have
\begin{equation}
		\sqrt{n} \asympConMat^{1/2} (\thetaMLapproxNn - \theta_0) \xrightarrow[n \to \infty]{\negloglikeFun} \mathcal{N}(0, I_p). 
	    \label{eq:asympNormApprox2}
	\end{equation}
\end{corollary}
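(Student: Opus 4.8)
The plan is to combine the asymptotic normality already established in Theorem~\ref{theo:normalityApprox}, namely
\[
\sqrt{n}\, \asympConMatapproxNn^{1/2}(\thetaMLapproxNn - \theta_0) \xrightarrow[n \to \infty]{\negloglikeFun} \mathcal{N}(0, I_p),
\]
with the convergence $\asympConMatapproxNn - \asympConMat \to 0$ from Lemma~\ref{lemma:convConMat} and the uniform spectral bounds \eqref{eq:posEigValsApprox} (equivalently \eqref{eq:posEigVals}, which transfer through Lemma~\ref{lemma:convConMat}). The idea is to write
\[
\sqrt{n}\, \asympConMat^{1/2}(\thetaMLapproxNn - \theta_0)
= \asympConMat^{1/2}\, \asympConMatapproxNn^{-1/2} \cdot \sqrt{n}\, \asympConMatapproxNn^{1/2}(\thetaMLapproxNn - \theta_0),
\]
and then apply Slutsky's lemma: the second factor converges in distribution to $\mathcal{N}(0,I_p)$, so it suffices to prove that the deterministic matrix $A_n := \asympConMat^{1/2}\, \asympConMatapproxNn^{-1/2}$ converges to the identity $I_p$.

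First I would record that, by \eqref{eq:posEigValsApprox} and \eqref{eq:posEigVals}, there are constants $0 < m \le M < \infty$ such that, for $n$ large enough, the eigenvalues of both $\asympConMat$ and $\asympConMatapproxNn$ lie in $[m, M]$; in particular $\asympConMatapproxNn$ is invertible for large $n$ and $\asympConMatapproxNn^{-1/2}$ has operator norm at most $m^{-1/2}$, and likewise $\asympConMat^{1/2}$ has operator norm at most $M^{1/2}$. Next, since $x \mapsto x^{1/2}$ is Lipschitz on $[m, M]$ (indeed operator-monotone and operator-Lipschitz on any interval bounded away from $0$), the convergence $\asympConMatapproxNn - \asympConMat \to 0$ from Lemma~\ref{lemma:convConMat} yields $\asympConMatapproxNn^{1/2} - \asympConMat^{1/2} \to 0$. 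Then I would write
\[
A_n - I_p = \asympConMat^{1/2}\asympConMatapproxNn^{-1/2} - \asympConMatapproxNn^{1/2}\asympConMatapproxNn^{-1/2}
= \big(\asympConMat^{1/2} - \asympConMatapproxNn^{1/2}\big)\asympConMatapproxNn^{-1/2},
\]
so that $\|A_n - I_p\| \le \|\asympConMat^{1/2} - \asympConMatapproxNn^{1/2}\| \cdot m^{-1/2} \to 0$. Hence $A_n \to I_p$.

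Finally, Slutsky's lemma gives $A_n \cdot \sqrt{n}\, \asympConMatapproxNn^{1/2}(\thetaMLapproxNn - \theta_0) \xrightarrow{\negloglikeFun} I_p \cdot \mathcal{N}(0, I_p) = \mathcal{N}(0, I_p)$, which is exactly \eqref{eq:asympNormApprox2}. The only mild subtlety — and the step I would be most careful about — is the passage from $\asympConMatapproxNn - \asympConMat \to 0$ to $\asympConMatapproxNn^{1/2} - \asympConMat^{1/2} \to 0$: this requires the uniform lower bound on the spectra so that the square-root map is Lipschitz on the relevant compact interval, rather than merely continuous at $0$. Everything else is a routine Slutsky argument, and no extra conditions beyond those of Theorem~\ref{theo:normalityApprox} are needed.
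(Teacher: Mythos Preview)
Your proposal is correct and follows essentially the same approach as the paper: invoke Theorem~\ref{theo:normalityApprox}, use Lemma~\ref{lemma:convConMat} together with the spectral bounds to get $\asympConMatapproxNn^{1/2}-\asympConMat^{1/2}\to 0$, and conclude via Slutsky. The only cosmetic difference is that you use the multiplicative factorization $\asympConMat^{1/2}\asympConMatapproxNn^{-1/2}\to I_p$, whereas the paper writes the additive decomposition $\sqrt{n}\,\asympConMat^{1/2}(\thetaMLapproxNn-\theta_0)=\sqrt{n}\,\asympConMatapproxNn^{1/2}(\thetaMLapproxNn-\theta_0)+\sqrt{n}\,(\asympConMat^{1/2}-\asympConMatapproxNn^{1/2})(\thetaMLapproxNn-\theta_0)$; your route is marginally cleaner since it avoids the implicit step of checking that $\sqrt{n}(\thetaMLapproxNn-\theta_0)$ is tight.
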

\begin{proof}
    According to Theorem \ref{theo:normalityApprox}, when $n \xrightarrow[]{} \infty$, we have the convergence
    \begin{equation}
		\sqrt{n} \asympConMatapproxNn^{1/2} (\thetaMLapproxNn - \theta_0) \xrightarrow[n \to \infty]{\negloglikeFun} \mathcal{N}(0, I_p). 
	\end{equation}
    Moreover, for any $n \in \mathbb{N}^*$, it holds
    \[
    \sqrt{n} \asympConMat^{1/2} (\thetaMLapproxNn - \theta_0) = \sqrt{n} \asympConMatapproxNn^{1/2} (\thetaMLapproxNn - \theta_0) + \sqrt{n} (\asympConMat^{1/2} -\asympConMatapproxNn^{1/2}) (\thetaMLapproxNn - \theta_0).
    \]
    However, according to Lemma \ref{lemma:convConMat}, we have
    \[
    \asympConMat^{1/2}-\asympConMatapproxNn^{1/2} \xrightarrow[n \to \infty]{} 0. 
    \]
    Then applying Slutsky theorem, we can conclude the proof.
\end{proof}

\section{Analytical examples and numerical illustrations}
\label{sec:examplesAsympPropertiesApprox}

\subsection{Analytical examples}
\label{sec:examplesAsympPropertiesApprox:subsec:examples}
Here, we give some examples of the application of the theorems mentioned in Section~\ref{sec:asympPropertiesApprox}. We consider the Hilbert space $\mathcal{H} = L^2([0,1])$ endowed with its usual inner product as well as the subspace
\begin{equation*}
	E_{\omega}
	:=
	\left\{f \in \mathcal{H}; \forall u, v \in [0, 1], |f(u) - f(v)| \leq \omega(|u-v|) 
	\right\},
\end{equation*}
of functions of $\mathcal{H}$ which admits $\omega : [0,1] \to [0,1]$ as a global modulus of continuity. Recall that $\omega(t) \to 0$ as $t \to 0$. Examples of such spaces include spaces of continuous functions, spaces of Lispchitz continuous functions, or spaces of Hölder continuous functions.

Recall that we denote by $\|\cdot\|$ the norm on $\functionSpace$ induced by the usual inner product, that is for any $f \in \functionSpace$
\[
\|f\|^2 = \int_{0}^1{f^2(x)dx}. 
\]
We also denote by $\|\cdot\|_{\infty}$ the uniform norm, that is for $f: [0,1] \to \mathbb{R}$, $\|f\|_{\infty} = \sup_{x \in [0,1]}|f(x)|$. Finally, we assume that the input functions $f_n$, $n \in \mathbb{N}$, are all elements of the linear subspace $E_{\omega}$.

\begin{example}[Sampling] \label{example:sampling}
	Let $f_i^N$ be the piecewise constant approximation of $f_i$ given by 
	\begin{equation*}
		f_i^N(x) = f_i\left(\frac{\overline{(N-1)x}}{N-1}\right) \qquad x \in [0,1],
	\end{equation*}
	with $\overline{(N-1)x}$  the integer part of $(N-1)x$. Then,
	\begin{equation*}
		\|f_i - f_i^{N_n}\| \leq \|f_i - f_i^{N_n}\|_{\infty} \leq \omega\left(\frac{1}{N_n-1}\right),
	\end{equation*}
	which verifies Condition~\ref{cond:approx:inputs} for any $N_n \to \infty$.
\end{example}

\begin{example}[Random sampling with time-independent samples]
\label{example:rndSampling}
	
	Consider random sampling where samples are fixed for all $i \in \mathbb{N}$. Let $U_{1}, \ldots, U_{N} \underset{iid}{\sim} \mathcal{U} ([0,1])$ and denote the order process by $0 = U_{(0)} < U_{(1)} < \cdots < U_{(N)}$. The random sampling approximation is given by
	\begin{equation*}
		f_i^N(x) = f_i\left(U_{(I_N(x))}\right),
	\end{equation*}
	with $I_N(x) = \max \left\{j \in \{0, \ldots, N\}; U_{(j)} \leq x\right\}$.
	
	As $N \to \infty$,  
	we have $\max_{j=1, \ldots, N} (U_{(j)} - U_{(j-1)}) \to 0$ almost surely. Hence, for any $N_n \to \infty$,        Condition~\ref{cond:approx:inputs} is verified almost surely, by the same arguments as in Example \ref{example:sampling}. 
\end{example}

\begin{example}[Random sampling with time-varying samples]
\label{example:rndSamplingTimeVarying}
	Consider now that $U_{1}^i, \ldots, U_{N}^i \underset{iid}{\sim} \mathcal{U} ([0,1])$. Denote the order process by $0 = U_{(0)}^i < U_{(1)}^i < \cdots < U_{(N)}^i$ for all $i \in \mathbb{N}$. The random sampling approximation is given by        \begin{equation*}
		f_i^N(x) = f_i\left(U_{(I_N^i (x))}^i\right),
	\end{equation*}
	with $I_N^i(x) = \max \left\{j \in \{0, \ldots, N\}; U_{(j)}^i \leq x\right\}$. For all $n$ and for all $i = 1, \ldots, n$, it holds
	\begin{align*}
		\|f_i - f_i^{N_n}\| 
		<
		\sup_{\substack{f \in E_\omega \\ x \in [0,1]}} \bigg|f(x) - f\left(U_{(I_{N_n}^i(x))}^i\right)\bigg| 
		\leq \max_{j = 1, \ldots, N_n } \omega (U_{(j)}^i - U_{(j-1)}^i). 
	\end{align*}
	Let $\delta > 0$ and denote by $F^{(N)}$ the empirical cumulative distribution function of $U_{1}, \ldots, U_{N}$. Then, since $F^{(N)}$ is almost surely piecewise constant in between the observations $U_1^1, \ldots, U_N^1$, we deduce that  $$\left\lbrace \exists j \in \{1, \ldots, N_n\},  U_{(j)}^{1} - U_{(j-1)}^{1}  > \delta\right\rbrace \subset \left\lbrace \sup_{x \in [0,1]}\left|F^{(N)}(x) - x\right| \geq \frac{\delta}{2} \right\rbrace,$$ and therefore
	\begin{equation*}
		\mathbb{P}\left(\max_{j=1,\ldots,N_n} \left(U_{(j)}^{1} - U_{(j-1)}^{1}\right) > \delta\right) \le \mathbb{P}\left(\sup_{x \in [0,1]}\left|F^{(N)}(x) - x\right| \geq \frac{\delta}{2}\right).
	\end{equation*}
	Moreover, letting $\omega^{-1}(t) = \inf \{s \geq 0; \omega(s) \geq t\}$,
	\begin{align*}
		\mathbb{P}\left(\max_{i=1, \ldots, n}  \|f_i - f^{N_n}_i\| > \delta\right)
		&\leq n \mathbb{P}(\|f_1 - f^{N_n}_1\| > \delta)
		\\
		&\leq n\mathbb{P}\left(\omega\left(\max_{j =1, \ldots, N_n} \left(U_{(j)}^1 - U_{(j-1)}^1\right) \right) \geq \delta\right)
		\\
		& \le n\mathbb{P} \left(\max_{j = 1, \ldots, N_n} \left(U_{(j)} - U_{(j-1)}\right) \geq \omega^{-1}(\delta)  \right)
		\\
		&\leq n\mathbb{P}\left(\|F^{(N_n)} - I_d \|_{\infty} \geq \frac{\omega^{-1}(\delta)}{2}\right).
	\end{align*}
	Then, by using the Dvoretzky-Kiefer-Wolfowitz inequality \cite{Massart1990DKWIneq}, we have
	\begin{equation*}
		\mathbb{P}\left(\max_{i=1, \ldots, n}  \|f_i - f^{N_n}_i\| > \delta\right)
		\leq 2n e^{-N_n(\omega^{-1}(\delta))^2/2}.
	\end{equation*}
	Hence, if $\frac{N_n}{\log n} \to \infty$ as $n \to \infty$, then we have 
	\begin{equation*}
		\max_{i=1, \ldots, n}  \| 
		f_i - f^{N_n}_i
		\| \xrightarrow[n \to \infty]{\mathbb{P}} 0. 
	\end{equation*}
	Then, there exists a sequence $\xi_n \xrightarrow[n \to \infty]{} 0$ such that 
	\begin{equation*}
		\mathbb{P}\left( \max_{i= 1, \ldots, n} \|
		f_i - f^{N_n}_i
		\| > \xi_n \right) \xrightarrow[n \to \infty]{} 0.
	\end{equation*}

	Denote 
	\begin{equation*}
		\overline{f}_i^{N_n}
		= \begin{cases}
			f_i^{N_n}, & \text{ if } \max_{i= 1, \ldots, n} \|f_i - f^{N_n}_i\| \leq \xi_n,
			\\
			f_i, & \text{otherwise}.
		\end{cases}
	\end{equation*}
	Let ${\widehat{\theta}}_{n, N_n}$ and $\overline{\widehat{\theta}}_{n, N_n}$ be the estimated parameters from $f_i^{N_n}$ and $\overline{f}_i^{N_n}$, respectively. Then, almost surely
	\begin{equation*}
		\max_{i= 1, \ldots, n} \|f_i - \overline{f}^{N_n}_i\| \leq \xi_n,
	\end{equation*}
	which implies, for almost all samplings, both the consistency and normality of $\overline{\widehat{\theta}}_{n, N_n}$ through Theorems~\ref{theo:consistencyApprox} and~\ref{theo:normalityApprox} as long as the rest of the hypotheses are satisfied. Since $\mathbb{P}(\overline{\widehat{\theta}}_{n, N_n} =  {\widehat{\theta}}_{n, N_n}) \to 1$ as $n \to \infty$, both guarantees also hold for ${\widehat{\theta}}_{n, N_n}$ for almost all samplings.
	
\end{example}

\begin{example}[Bernstein approximation]
\label{example:Bernstein}
For any $N$, and any $k \in \{1, \ldots, N\}$, denote by $B_{N, k} : x \mapsto {\begin{pmatrix} N \\ k \end{pmatrix}} x^k (1-x)^{N-k}$ the Bernstein polynomials of degree $N$ with binomial coefficients {$\begin{pmatrix} N \\ k \end{pmatrix}$}. The Bernstein approximation $f_i^N$ of $f_i$ is then given by $f_i^N(x) = \sum_{k=0}^{N} f\left(\frac{k}{N}\right) B_{N,k} (x)$ for all $x \in [0,1]$. In this case (\cite{Schatzman2002numerical}, Theorem 5.3.2)
states	\begin{equation*}
		\|f_i - f_i^{N_n}\|
		\leq \frac{9}{4} \omega\left(\frac{1}{\sqrt{N_n}} \right),
	\end{equation*}
	which satisfies Condition~\ref{cond:approx:inputs} for any $N_n \xrightarrow[n \to \infty]{} \infty$.
\end{example}

\begin{example}[Cubic spline approximation]
\label{example:Splines}
	Let $\xi_1, \ldots, \xi_N$ be the knots of the spline approximation such that $0 = \xi_1 < \cdots < \xi_N = 1$ and $\Delta \xi_j = \xi_{j+1} - \xi_j$ for all $j \in \{1, \ldots, N-1\}$. Assume that $\max_{j = 1, \ldots, N-1} \Delta \xi_i \to 0$ as $N \to \infty$. For all $i \in \mathbb{N}$, consider $f_i \in \mathcal{C}^{4}([0,1])$ with $\mathcal{C}^{4}$ the set of four times continuously differentiable functions. Define $f_i^{(4)}$ as the $4$-th derivative of $f_i$, and $f_i^{N}$ as the cubic spline approximation of $f_i$. Then (\cite{Hall1968ErrorBoundsSplineInterpolation}, Theorem 1),
	\begin{equation*}
		\|f_i - f_i^{N_n}\| 
		\leq \|f_i - f_i^{N_n}\|_\infty
		\triangleleft \left(\max_{j = 1, \ldots, N_n-1} \Delta \xi_j\right)^{4} \|f^{(4)}\|_{\infty},
	\end{equation*}
	which verifies Condition~\ref{cond:approx:inputs} for any $N_n \to \infty$.
\end{example}
Note that there exist many different error bounds for spline approximations under a variety of hypotheses (for a more in-depth analysis, see \cite{Rosen1971MinimumErrorBoundsSplines}) which would lead to different but similar cases as Example~\ref{example:Splines}. 

\subsection{Numerical illustrations}
\label{sec:examplesAsympPropertiesApprox:subsec:numIllustrations}

We now numerically verify Theorems~\ref{theo:consistencyApprox} and~\ref{theo:normalityApprox} for the examples in Section~\ref{sec:examplesAsympPropertiesApprox:subsec:examples}.  Due to computational challenges involved in both data generation and covariance parameter estimation, we have limited the asymptotic analysis to $n \leq 160$ and $N_n \leq 160$. This choice is based on the fact that conducting these tasks for $n > 160$ would require more than dozens of hours. Furthermore, as observed in Figures~\ref{fig:consistencyApprox} and~\ref{fig:normalityApprox}, it is worth noting that convergence results become visible when $n = 160$. Our experiments have been executed on an 11th Gen Intel(R) Core(TM) i5-1145G7 @ 2.60GHz 16 Gb RAM. The Python source codes are available on the GitHub repository: \url{https://github.com/anfelopera/fGPs}. The different plots have been generated using the Python toolbox Seaborn \cite{Waskom2021Seaborn}.
\\
\\
We consider inputs given by the functional form:
\begin{equation}
    f_t(x) = 2t + \cos(U_0(t) x) e^{-U_1(t) x},
    \label{eq:fexamples}
\end{equation}
with $x \in [0,1]$, and $(U_0(t))_{t \in \mathbb{N}},(U_1(t))_{t \in \mathbb{N}}$ two i.i.d. sequences of i.i.d. random variables sampled from a uniform distribution $U[0, 1]$. While $U_0$ introduces oscillations in $f$, $U_1$ controls its decay rate. The parameter $t \in \mathbb{N}$ is introduced to satisfy Condition~\ref{cond:distant} related to the increasing-domain asymptotics.
\\
\\
We sample $n$ random inputs $f_1, \ldots, f_n$ from~\eqref{eq:fexamples}. These inputs are then used to compute the corresponding outcomes $y_i = Y(f_i)$, where $Y(f_i)$ are obtained as a random realization of a GP with the SE kernel in~\eqref{eq:statkernel} and covariance parameters $\theta_0 = (0.5, 0.1)$. These values have been arbitrarily chosen for illustrative purposes. The resulting collection of tuples $(f_i, y_i)_{i=1, \ldots, n}$ form the training dataset. For the asymptotic analysis, we consider $n \geq 20$.
\\
\\
For a given value of $n$, we repeat the aforementioned strategy for generating $10^3$ distinct replicates of the training dataset. Then, for each replicate, we train a new GP with a SE kernel. We consider approximations based on sampling, random sampling with time-independent samples or with time-varying samples, and Bernstein polynomials. For the latter three cases, the $L_2$-norm is approximated at $d = 500$ equispaced values of $x \in [0,1]$. For random sampling with time-varying samples, due to the condition $\frac{N_n}{\log n} \to \infty$ discussed in Example~\ref{example:rndSamplingTimeVarying}, we fix $N_n = 160$ independently of the value of $n$. Finally, the covariance parameters $\theta = (\theta_1, \theta_2)$ are estimated via ML in a multi-start context, with five initial values randomly sampled from $[0.1, 10]^2$. To do so, the gradient-based modified Powell algorithm available in the Python toolbox SciPy~\cite{2020SciPy-NMeth} has been used. This entire procedure results in $10^3$ estimations of $\theta_1$ and $\theta_2$, which will be used for assessing Theorems~\ref{theo:consistencyApprox} and~\ref{theo:normalityApprox}. For the latter, we focus on Corollary~\ref{theo:normalityApprox2}.
\begin{figure}[t!]
    \centering
    \includegraphics[width = 0.325\linewidth]{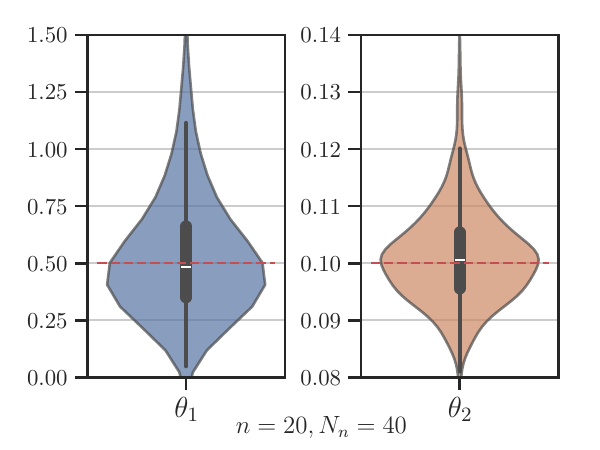}
    \includegraphics[width = 0.325\linewidth]{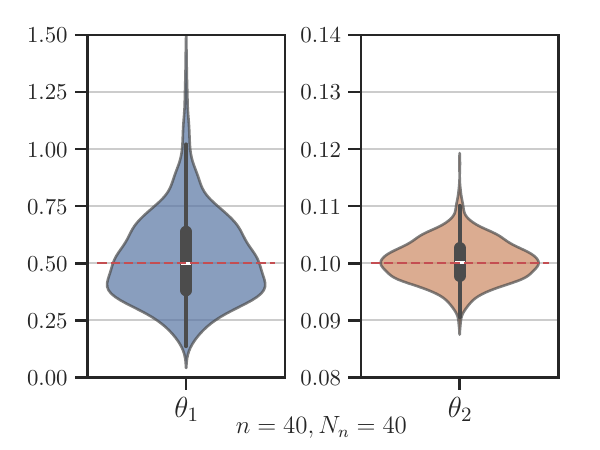}
    \includegraphics[width = 0.325\linewidth]{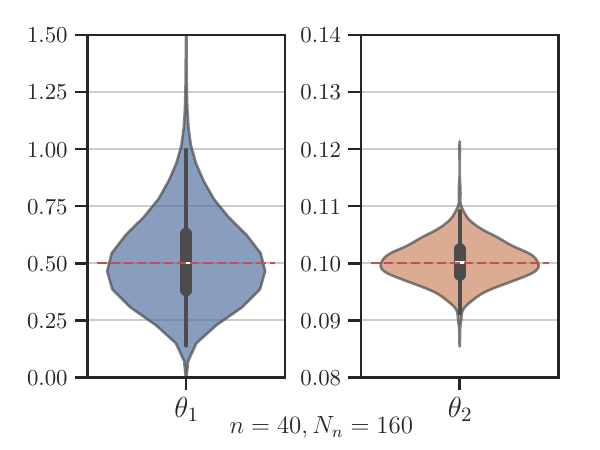}
    \vspace{-1ex}

    \includegraphics[width = 0.325\linewidth]{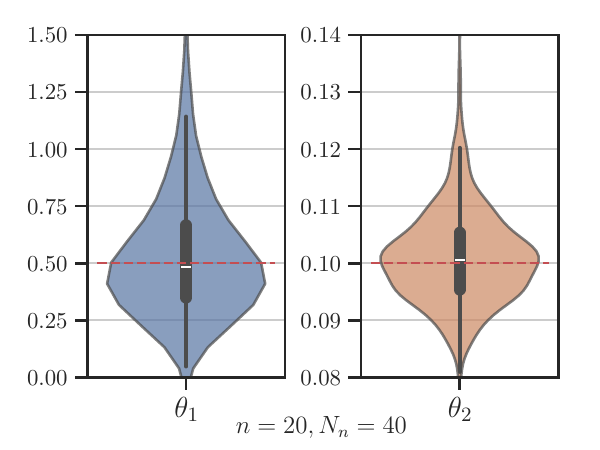}
    \includegraphics[width = 0.325\linewidth]{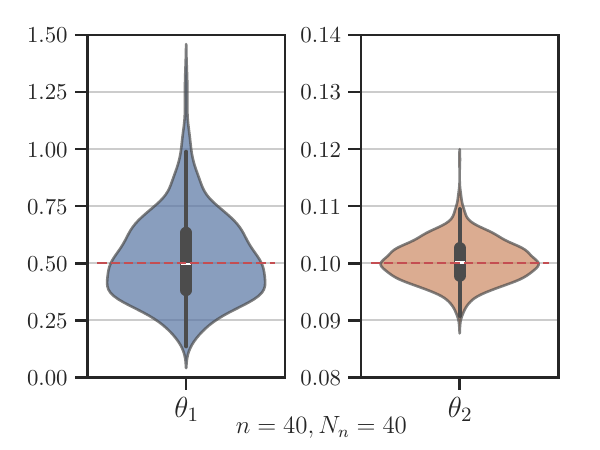}
    \includegraphics[width = 0.325\linewidth]{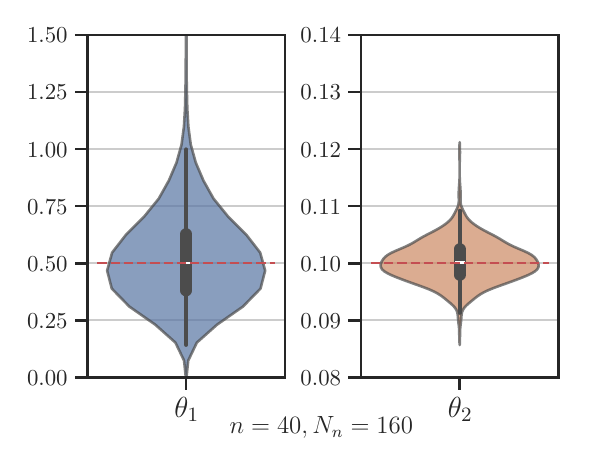}
    \vspace{-1ex}

    \includegraphics[width = 0.325\linewidth]{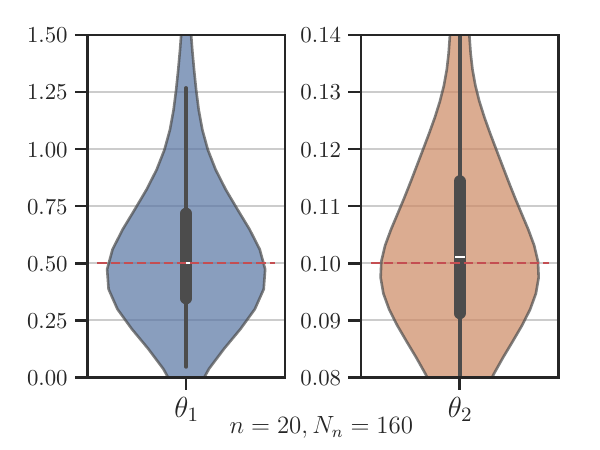}
    \includegraphics[width = 0.325\linewidth]{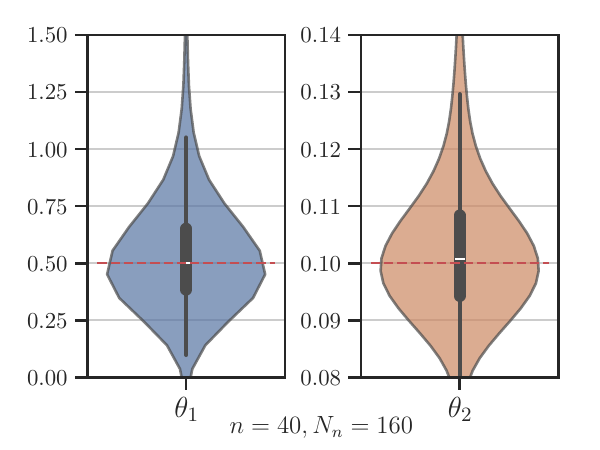}
    \includegraphics[width = 0.325\linewidth]{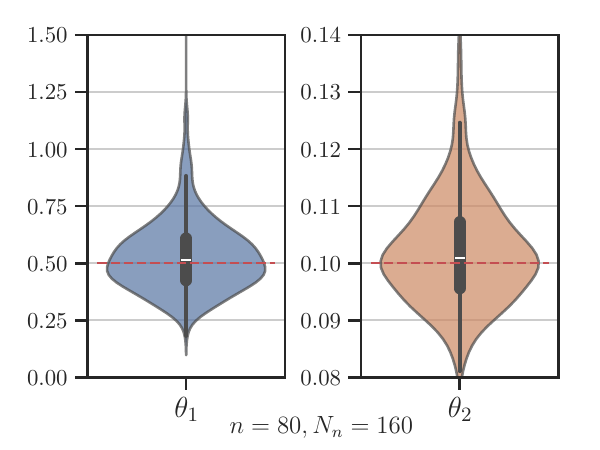}
    \vspace{-1ex}

    \includegraphics[width = 0.325\linewidth]{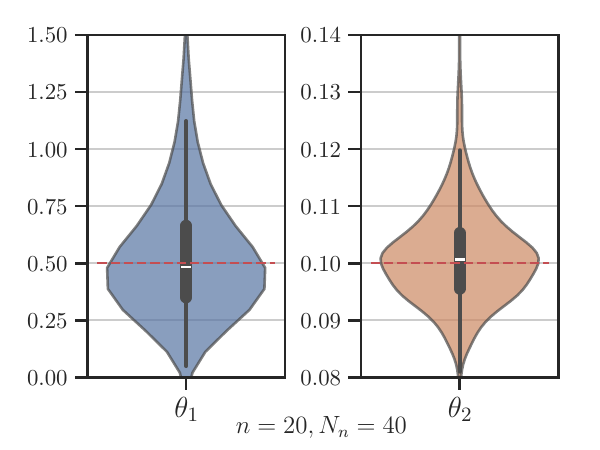}
    \includegraphics[width = 0.325\linewidth]{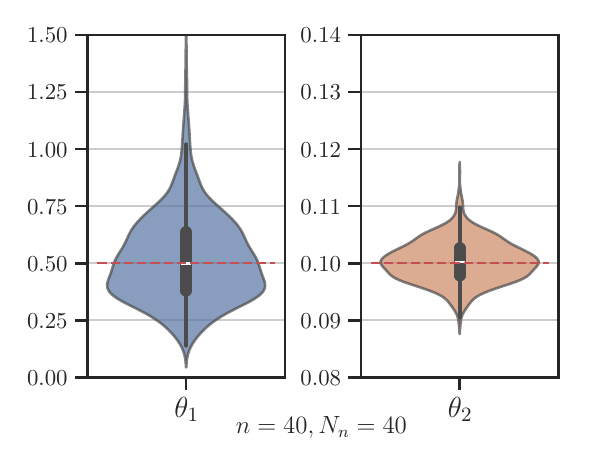}
    \includegraphics[width = 0.325\linewidth]{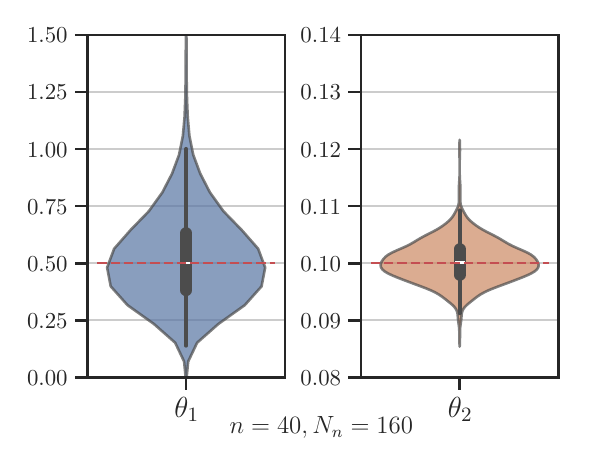}
    \caption{Violin plots of the estimated covariance parameters $\widehat{\theta} = (\widehat{\theta}_1, \widehat{\theta}_2)$ for Examples~\ref{example:sampling}-\ref{example:Bernstein} (from top to bottom: sampling, random sampling with time-independent samples, random sampling with time-varying samples, Bernstein), and for different values of $n \geq 20$ and $N_n \geq 40$ (results shown by rows). 
    The panel illustrates the estimation results (boxplot and KDE plot) of the variance $\widehat{\theta}_1$ and length-scale $\widehat{\theta}_2$ computed over $10^3$ replicates. The ground truth $\theta_0$ is represented by a horizontal red dashed line.}
    \label{fig:consistencyApprox}
\end{figure}
\\
\\
Figure~\ref{fig:consistencyApprox} shows the violin plots illustrating both the boxplots and the kernel density estimate (KDE) plots for $\widehat{\theta}_1$ and $\widehat{\theta}_2$. We observe that, for all four cases, as $n$ and $N_n$ increase, the distributions of the estimators become closer to the Gaussian ones with median values closer to the ground truth $\theta_0 = (0.5, 0.1)$. It is noteworthy that, due to the complexity of the random sampling with time-varying samples, a larger value of $n$ has been needed for more accurate median values with contracted distributions around the median. Overall, the results support the verification of Theorem~\ref{theo:consistencyApprox} related to the asymptotic consistency of the ML estimator.
\begin{figure}[t!]
    \centering
    \includegraphics[width = 0.3\linewidth]{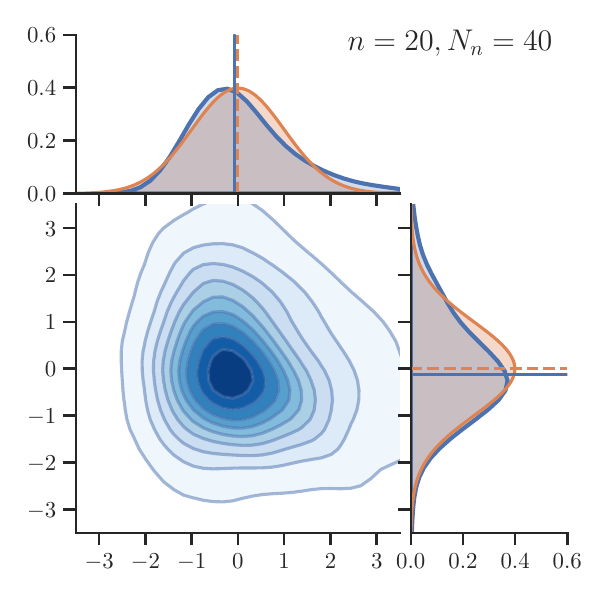}
    \includegraphics[width = 0.3\linewidth]{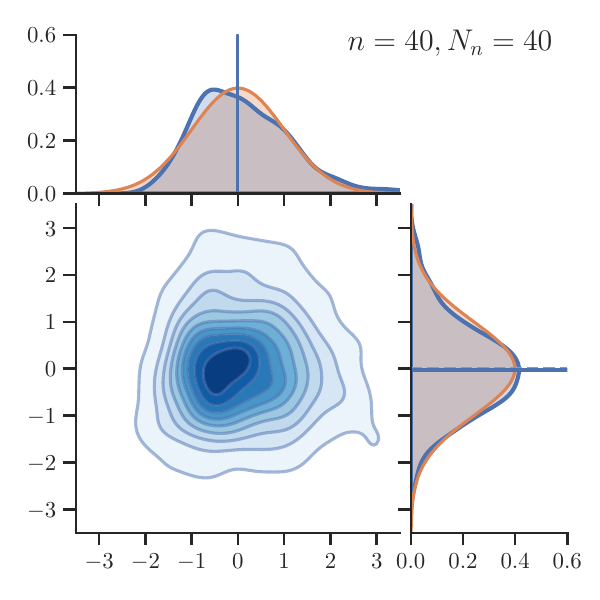}
    \includegraphics[width = 0.3\linewidth]{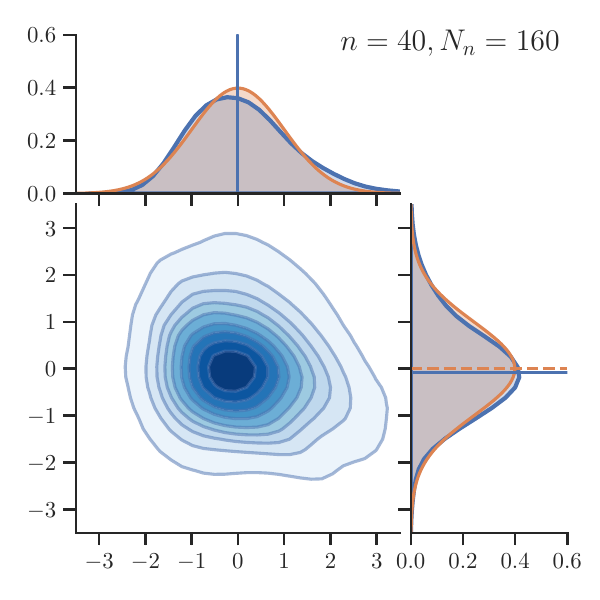}
    \vspace{-1.5ex}

    \includegraphics[width = 0.3\linewidth]{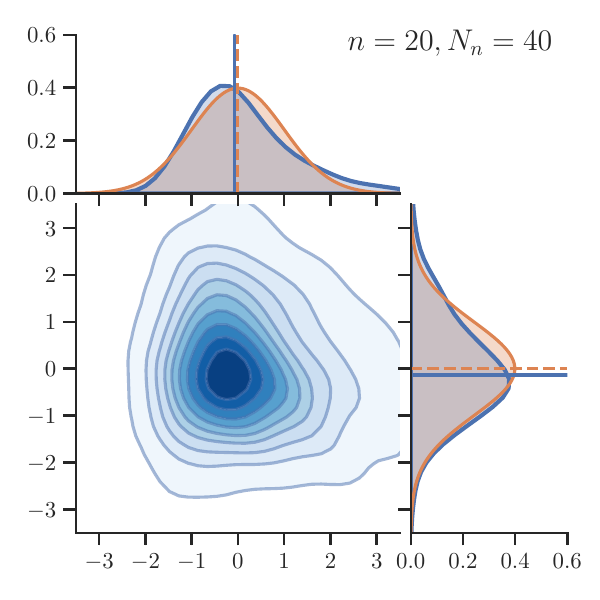}
    \includegraphics[width = 0.3\linewidth]{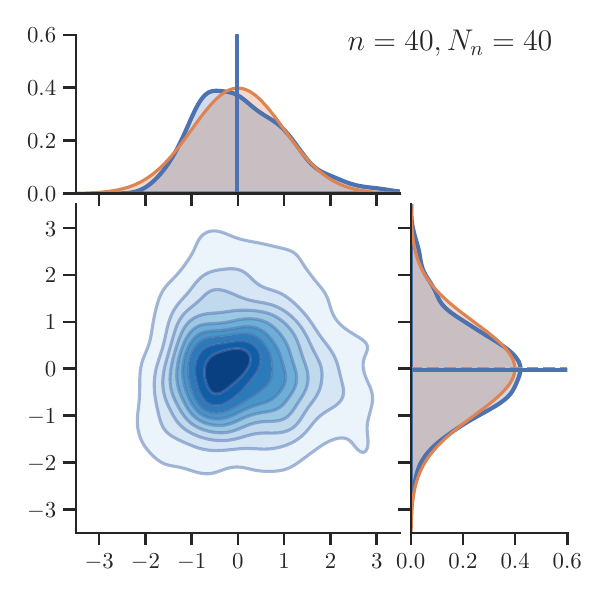}
    \includegraphics[width = 0.3\linewidth]{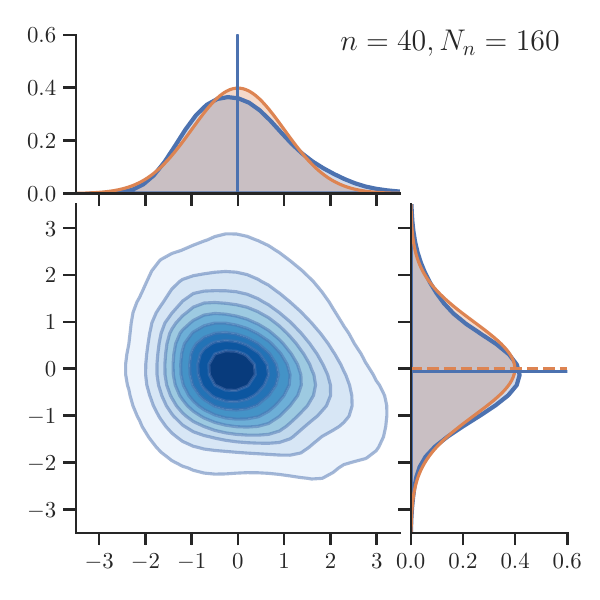}
    \vspace{-1.5ex}

    \includegraphics[width = 0.3\linewidth]{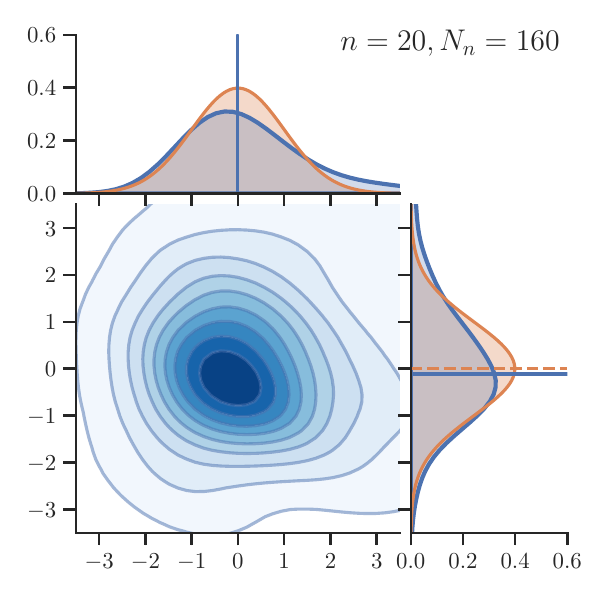}
    \includegraphics[width = 0.3\linewidth]{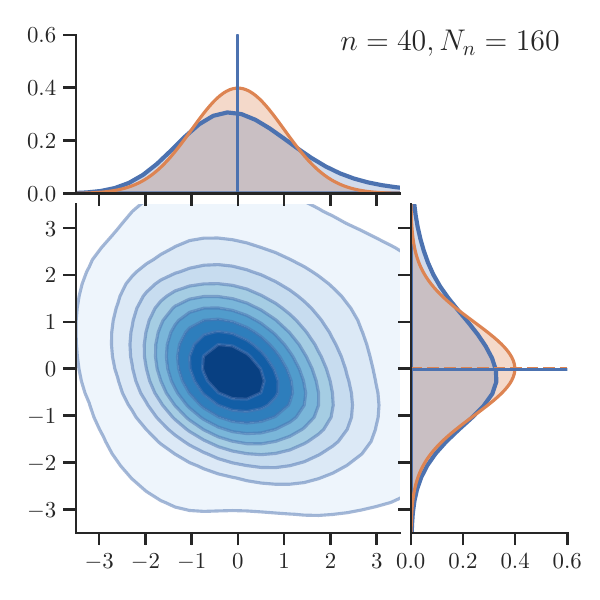}
    \includegraphics[width = 0.3\linewidth]{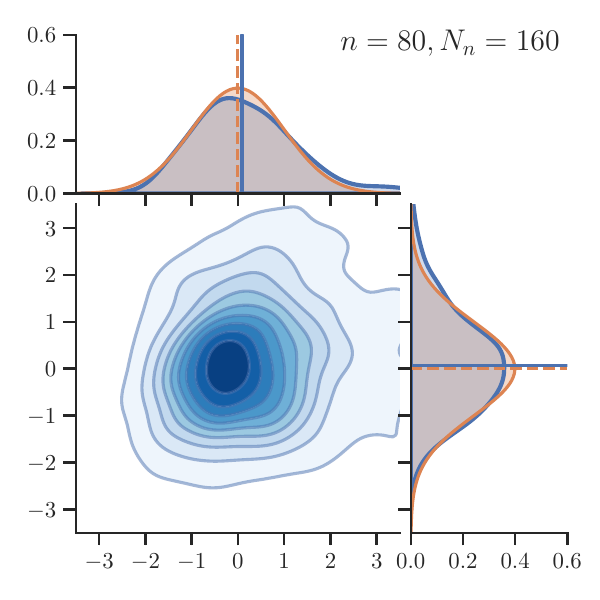}
    \vspace{-1.5ex}
    
    \includegraphics[width = 0.3\linewidth]{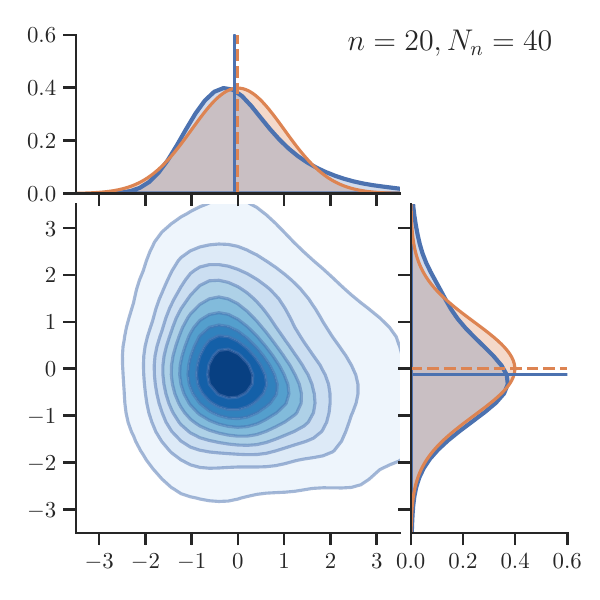}
    \includegraphics[width = 0.3\linewidth]{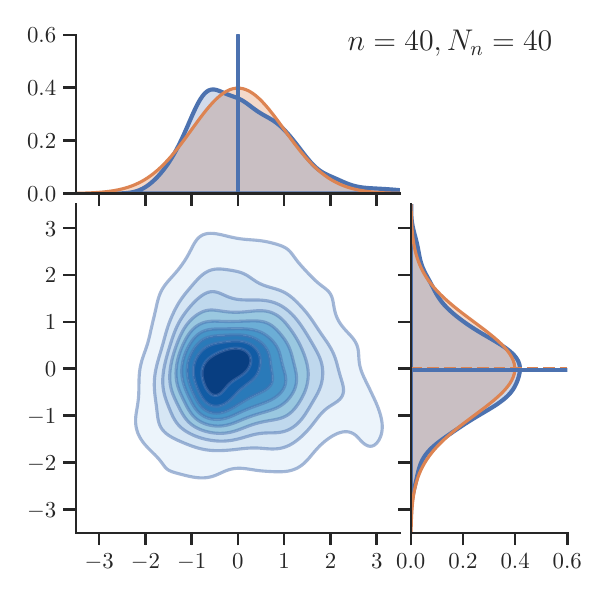}
    \includegraphics[width = 0.3\linewidth]{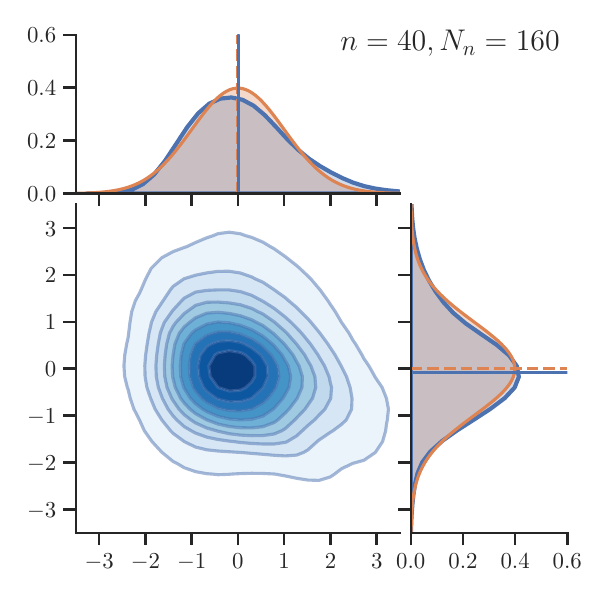}
    
    \caption{2D KDE plots of $\beta = \sqrt{n} \asympConMat^{1/2} (\thetaMLapproxNn - \theta_0) \in \mathbb{R}^2$ for Examples~\ref{example:sampling}-\ref{example:Bernstein} (from top to bottom: sampling, random sampling with time-independent or time-varying samples, Bernstein), and for different values of $n \geq 20$ and $N_n \geq 40$ (results shown by rows). As shown in Corollary~\ref{theo:normalityApprox2}, $\beta \sim \mathcal{N}(0, I_2)$ as $n, N_n \to \infty$. For the case of random sampling with time-varying samples (third row), due to the condition $\frac{N_n}{\log n} \to \infty$, we let $N_n = 160$. The KDE (blue) and asymptotic (orange) marginal distributions of $\beta$ are shown in the $x-y$ axes, with dashed lines indicating the median values.}
    \label{fig:normalityApprox}
\end{figure}

Figure~\ref{fig:normalityApprox} presents 2D KDE plots for $\beta = \sqrt{n} \asympConMat^{1/2} (\thetaMLapproxNn - \theta_0) \in \mathbb{R}^2$. As shown in Corollary~\ref{theo:normalityApprox2}, the random vector $\beta$ is expected to follow an asymptotic distribution $\mathcal{N}(0, I_2)$ as $n, N_n \to \infty$. The blue areas represent the 2D confidence ensembles at varying levels of confidence for $\beta$. Additionally, we display the KDE plots of the marginal distributions of $\beta$. We must clarify that these marginals do not correspond directly to the estimated parameters $\widehat{\theta}_1$ and $\widehat{\theta}_2$, but rather to centered linear combinations of them. Consistently with the results in Figure~\ref{fig:consistencyApprox}, we observe that, as $n$ and $N_n$ increase, the marginals become closer to the standard Gaussian distribution, thus supporting Theorem~\ref{theo:normalityApprox} and Corollary~\ref{theo:normalityApprox2} related to the asymptotic normality of the ML estimator. Although the marginals indeed match the asymptotic distributions, this is not clear from the 2D KDE plots. We believe that this discrepancy is likely due to numerical issues stemming from the estimation method used by the 2D KDE function available in the Python toolbox Seaborn. We anticipate that considering tens of thousands of random replicates could yield better results, closer to the asymptotic distribution $\mathcal{N}(0, I_2)$.

\section{Conclusions}
\label{sec:conclusions}
To the best of our knowledge, we have introduced the first general convergence guarantees for the maximum likelihood (ML) estimator of a Gaussian process (GP) with functional inputs. Within the framework of increasing-domain asymptotics, we have shown both the consistency and normality of the estimator. Moreover, we extend these findings to encompass scenarios accounting for approximation errors in the inputs. These extensions offer valuable insights into certifying the robustness of practical implementations that rely on approximations obtained through conventional sampling methods or projections onto a fixed functional basis. Through several analytical and numerical examples, we illustrate the validity of the asymptotic guarantees.
\\
\\
A natural extension of this work includes convergence results within the fixed-domain asymptotic setting. This will require specific derivation on a case-by-case basis, tailored to particular covariance functions. 

\paragraph{Acknowledgments.} This work was supported by the National Institute for Mathematical Sciences and Interactions (INSMI), CNRS, as part of the PEPS JCJC 2023 call; and the project GAME (ANR-23-CE46-0007) of the French National Research Agency (ANR).

\clearpage

\section*{Appendix A: Conditions for isotropic covariance functions}
\label{app:CondIsotropy}
This appendix provides the adaptions of Conditions~\ref{cond:covFunctCondition}, \ref{cond:dcovFunctCondition}, \ref{cond:d3covFunctCondition}, \ref{cond:derivative:t}, \ref{cond:derivative:t:der:theta} and~\ref{cond:derivative3:t:der:theta} to the case of isotropic covariance functions.

\begin{Condition}
\label{cond:covFunctConditionIso}
Suppose that the covariance structure given by \eqref{eq:kernel} is both stationary and isotropic, that is we can write the kernels $K_{\theta}$ as
\begin{equation*}
		\forall \theta \in \Theta, \forall f,g \in \functionSpace, \ K_\theta(f, g) = F_\theta(\|f - g\|),
	\end{equation*}
	with $F_\theta: \realset{+} \to \realset{}$.
    Furthermore, suppose that Conditions~\ref{cond:distant} and \ref{cond:orthormalBasis} hold, and that there exists a fixed $\gamma_0 > d_0$, such that for all $t \ge 0$, it holds 
	\begin{equation*}
		\sup_{\theta \in \Theta} |F_\theta(t)| \ \triangleleft \ \frac{1}{1+t^{\gamma_0} }.
	\end{equation*}
\end{Condition}

We can also rewrite Conditions \ref{cond:dcovFunctCondition} and~\ref{cond:d3covFunctCondition} in the isotropic case as follows.
\begin{Condition}
	\label{cond:dcovFunctConditionIso}
	Assume Conditions~\ref{cond:distant} and \ref{cond:orthormalBasis} holds. For all $t \geq 0$, $F_\theta(t)$ is continuously differentiable with respect to $\theta$ and, for some $\gamma_1 > d_0$, we have
	\begin{equation*}
		\sup_{\theta \in \Theta} \max_{i = 1, \ldots, p} \left|\frac{\partial F_\theta}{\partial \theta_i}(t)\right| \ \triangleleft \ \frac{1}{1+t^{\gamma_1} }.
	\end{equation*}
\end{Condition}

    \begin{Condition}
	\label{cond:d3covFunctConditionIso}	Assume Conditions~\ref{cond:distant} and \ref{cond:orthormalBasis} holds. For all $t \geq 0$, $\theta \mapsto F_\theta(t)$ is three times continuously differentiable and we have, for $q \in \{2,3\}$, $i_1, \ldots, i_q \in \{1, \ldots, p\}$, and for some $\gamma_2 > d_0$,
	\begin{equation*}
		\sup_{\theta \in \Theta} \left|\frac{\partial^qF_\theta}{\partial \theta_{i_1} \cdots\partial \theta_{i_q}} (t)\right| \ \triangleleft \ \frac{1}{1+t^{\gamma_2} }.
	\end{equation*}
	\end{Condition}

\smallskip

The isotropic equivalents to Conditions \ref{cond:derivative:t}, \ref{cond:derivative:t:der:theta}, and \ref{cond:derivative3:t:der:theta} are given below.
\begin{Condition}
\label{cond:derivative:t:Iso}
Keeping the same constant $\gamma_0$ as in Condition \ref{cond:covFunctCondition}, there exists another constant $\rho_0 > 0$ such that for all $t_1,t_2 \geq 0$ such that $0 < |t_1-t_2| \le \rho_0$, we have
	\begin{equation*}
	    \sup_{\theta \in \Theta} 
	\left|\frac{
		F_{\theta}(t_1) - F_{\theta}(t_2)
	}{t_1 - t_2} \right| 
	\ \triangleleft \ \frac{1}{1+\min(t_1,t_2)^{\gamma_0} }.
	\end{equation*}
\end{Condition}

\begin{Condition} \label{cond:derivative:t:der:theta:Iso}
	Assume that Conditions \ref{cond:dcovFunctCondition} holds with the same constant $\gamma_1 > 0$. Then there exists another constant $\rho_1 > 0$ such that for all $i \in \{1, \ldots, p\}$, and for all $t_1,t_2 \geq 0$ such that $0 < |t_1-t_2| \leq \rho_1$, we have
	\begin{equation*}
		\sup_{\theta \in \Theta}
		\frac{1}{|t_1-t_2|}
		\left|
		\frac{\partial F_\theta}{\partial \theta_{i}} (t_1)
		-
		\frac{\partial F_\theta}{\partial \theta_{i}}(t_2)
		\right| \ \triangleleft \ \frac{1}{1+\min(t_1,t_2)^{\gamma_1} }.
	\end{equation*}    
\end{Condition}

\begin{Condition} \label{cond:derivative3:t:der:theta:Iso}
	Assume that Condition \ref{cond:d3covFunctCondition} holds with the same constant $\gamma_2 > 0$. Then there exists another constant $\rho_2 > 0$ such that for all $q \in \{2,3\}$, for all $i_1, \ldots, i_q \in \{1, \ldots, p\}$, and for all $t_1,t_2 \geq 0$ such that $0 < |t_1-t_2| \leq \rho_2$, we have
	\begin{equation*}
		\sup_{\theta \in \Theta}
		\frac{1}{|t_1-t_2|}
		\left|
		\frac{\partial^q F_\theta}{\partial \theta_{i_1}\cdots\partial \theta_{i_q}} (t_1)
		-
		\frac{\partial^q F_\theta}{\partial \theta_{i_1}\cdots\partial \theta_{i_q}}(t_2)
		\right| \ \triangleleft \ \frac{1}{1+\min(t_1,t_2)^{\gamma_2} }.
	\end{equation*}    
\end{Condition}
\newpage
\section*{Appendix B: Proof of Lemmas in Section~\ref{sec:asympProperties}}
\label{app:proofLemmas}
\begin{proof}[Proof of Lemma~\ref{lemma:A}]
Let $g$, $(x_i)_{i \in \mathbb{N}^*}$ and $n$ be like in the lemma. Then, through a partitioning argument, we get
	\begin{align*}
		\sum_{i=1}^{n} g(x_i) 
		= \sum_{k=1}^{\infty} \sum_{i \in \{i \ : \ x_i \in [k-1,k) \}} g(x_i)
		\leq \sum_{k=1}^{\infty} \nu_k g({k-1}).
	\end{align*}
The last inequality holds since $g$ is non-increasing.
\end{proof}

\begin{proof}[Proof of Lemma~\ref{lemma:S}]
	Let $n \in \mathbb{N}^*$. From Condition~\ref{cond:covFunctCondition}, we have
	\begin{align*}
		S_j
		&= \sup_{\theta \in \Theta} \sum_{i=1}^{n} |K_\theta(f_j, f_i)|
		\ \triangleleft \ \sum_{i=1}^{n}{\frac{1}{1+\| f_{j} - f_{i}\|^{\gamma_0}}},
	\end{align*}
	for all $j \in \{1,\ldots,n\}$. From Condition~\ref{cond:orthormalBasis} and Parseval's identity, we get that, for all $k,\ell \in \{1,\ldots,n\}$,
    \begin{align}
    \label{ineq:approx}
    \left\|\left( f_{k,d_0} - f_{\ell,d_0}\right) - \left(f_{k} - f_{\ell}\right) \right\| \leq 2\Bigg(\sum_{m=d_0+1}^\infty s_m^2\Bigg)^{\frac{1}{2}} =: \delta.
    \end{align}
    However, we have the following inequality for all $x \ge 2\delta$
    \begin{align}
    \label{ineq:poly_decrease}
    \frac{C}{1+(x-\delta)^{\gamma_0}} \le \frac{1}{1+x^{\gamma_0}},
    \end{align} 
    with $C = \min\left(\frac{1+\delta^{\gamma_0}}{1+(2\delta)^{\gamma_0}}, \left(\frac{1}{2}\right)^{\gamma _0-1}\right) < 1$. Thus using this inequality as well as \eqref{ineq:approx} and Lemma~\ref{lemma:existd}, we get that
	\begin{align*}
		S_j
		\ \triangleleft \ \sum_{i=1}^{n}{\frac{1}{1+\| f_{j,d_0} - f_{i,d_0}\|^{\gamma_0}}}.
	\end{align*}
	Applying Lemma~\ref{lemma:A}, we get
    \begin{align*}
        \sum_{i=1}^{n}{\frac{1}{1+\| f_{j,d_0} - f_{i,d_0}\|^{\gamma_0}}} 
        & \leq \sum_{k=1}^{\infty}   \frac{\nu_k}{1+(k-1)^{\gamma_0}}
        \\
        & \triangleleft \frac{1}{\Delta ^{d_0}} \sum_{k=1}^{\infty}\frac{k^{d_0-1}}{1+(k-1)^{\gamma_0}}.
    \end{align*}
	with $\nu_k = \#\{i: \|f_{i,d_0} - f_{j,d_0}\| \in [k-1,k)\} \triangleleft \frac{k^{d_0-1}}{\Delta^{d_0}}$ according to Condition~\ref{cond:distant}. Since $\gamma _0 > d_0$,
	\begin{equation*}
        S_j \ \triangleleft \frac{1}{\Delta ^{d_0}}\ \sum_{k=1}^{\infty}\frac{k^{d_0-1}}{1+(k-1)^{\gamma_0}} < \infty.
	\end{equation*}
\end{proof}

\begin{proof}[Proof of Lemma~\ref{lemma:S'}]
	Let $n \in \mathbb{N}^*$, then from Condition~\ref{cond:dcovFunctCondition}, we have
	\begin{align*}
		\widetilde{S}_j
		&= \sup_{\theta \in \Theta} \sum_{i=1}^{n} \left|\frac{\partial K_\theta}{\partial \theta_k} (f_j, f_i)\right|\ \triangleleft \ \sum_{i=1}^{n}{\frac{1}{1+\| f_{j} - f_{i}\|^{\gamma_1}}},
	\end{align*}
	for all $j \in \{1,\ldots,n\}$. We can conclude by using similar arguments as in the proof of Lemma~\ref{lemma:S}.
\end{proof}

\begin{proof}[Proof of Lemma~\ref{lemma:lambdaMax}]
Using Lemma~\ref{lemma:S} as well as the Gerschgorin's circle theorem, we can show that
	\begin{equation*}
		\sup_{\theta \in \Theta} \lambda_{\max} (R_\theta) \leq \sup_{\theta \in \Theta} \max_{j=1,\ldots,n} \sum_{i=1}^{n} |K_\theta(f_j, f_i)|
	\end{equation*}
	is bounded as $n \to \infty$. For the second part, it suffices to replace Lemma~\ref{lemma:S} by Lemma~\ref{lemma:S'} and argue in the same manner as before to get that 
	\begin{equation*}
		\sup_{\theta \in \Theta} \max_{i = 1, \ldots, p} \lambda_{\max} \left(\frac{\partial R}{\partial \theta_i}(\theta) \right)
	\end{equation*}
	is bounded as $n \to \infty$.
\end{proof}

\begin{proof}[Proof of Lemma~\ref{lemma:consitencyLike}]
	The function $\theta \mapsto \negloglikeFun_{n}(\theta)$ is $C^1$, note that
	$$
	M
    :=
    \sup_{\theta \in \Theta}
 \left|\frac{\partial\left(\negloglikeFun_{n}-\mathbb{E}\left[\negloglikeFun_{n}(\cdot)\right]\right)}{\partial \theta_i}(\theta)\right| 
    = O_P(1).
	$$
	Let $\epsilon > 0$ and consider $\widetilde{\theta}_1, \ldots, \widetilde{\theta}_k \in \Theta$ such that all $(B(\widetilde{\theta}_i, \epsilon))_i$ constitute an open cover.
	Let $\theta \in \Theta$ and denote by $\widetilde{\theta}_{i_0}$ the closest $\widetilde{\theta}_i$ from $\theta$. Then, by the mean value inequality, we have
	$$
	\left|\negloglikeFun_{n}(\theta) - \mathbb{E}\left[\negloglikeFun_{n}(\theta)\right] - \negloglikeFun_{n}(\widetilde{\theta}_{i_0}) + \mathbb{E}\left[\negloglikeFun_{n}(\widetilde{\theta}_{i_0})\right]\right| \le 2M\left|\theta - \widetilde{\theta}_{i_0}\right|.
	$$
	Thus
	$$
	\left|\negloglikeFun_{n}(\theta) - \mathbb{E}\left[\negloglikeFun_{n}(\theta)\right]\right| \le 2M\left|\theta - \widetilde{\theta}_{i_0}\right| + \left|\negloglikeFun_{n}(\widetilde{\theta}_j) - \mathbb{E}\left[\negloglikeFun_{n}(\widetilde{\theta}_{i_0})\right]\right|.
	$$
	However $|\negloglikeFun_{n}(\widetilde{\theta}_{i_0}) - \mathbb{E}[\negloglikeFun_{n}(\widetilde{\theta}_{i_0})]| = o_P(1)$ as well as $|\theta - \widetilde{\theta}_{i_0}| \le \epsilon$ with $\epsilon$ arbitrarily chosen. This concludes the proof.
\end{proof}

\begin{proof}[Proof of Lemma~\ref{lemma:S''}]
    Let $n \in \mathbb{N}^*$, $q \in \{2,3\}$ and $i_1, \ldots, i_q \in \{1, \ldots, p\}$, then from Condition \ref{cond:d3covFunctCondition}, we have 
    \begin{align*}
		\sup_{\theta \in \Theta} \sum_{i=1}^{n} \left|\frac{\partial^q K_\theta}{\partial \theta_{i_1}\cdots \partial \theta_{i_q}} (f_j, f_i)\right|\ \triangleleft \ \sum_{i=1}^{n}{\frac{1}{1+\| f_{j} - f_{i}\|^{\gamma_2}}},
	\end{align*}
	for all $j \in \{1,\ldots,n\}$. We can conclude by using similar arguments as in the proof of Lemma~\ref{lemma:S}.
\end{proof}

\begin{proof}[Proof of Lemma~\ref{lemma:lambdaMaxDeriatives}]
	As in the proof of Lemma~\ref{lemma:lambdaMax}, we obtain this result\\ through the appropriate use of Gershgorin's circle theorem as well as Lemma~\ref{lemma:S''}.
\end{proof}

\newpage
\section*{Appendix C: Proof of Lemmas in Section~\ref{sec:asympPropertiesApprox}}
\label{app:proofLemmasApprox}

\begin{proof}[Proof of Lemma~\ref{lemma:lambdaMaxApprox}]
    According to Gerschgorin's circle theorem, we have for any $\theta \in \Theta$
	\begin{align*}
		\lambda_{\max}\left(R_\theta-R_{N_n, \theta}\right)
		&\leq 
		\max_{i=1, \ldots, n} \sum_{j = 1}^{n} \left|(R_\theta)_{i,j}-(R_{N_n, \theta})_{i,j}\right|
		\\
		&= \max_{i=1, \ldots, n} \sum_{j = 1}^{n} \left|K_{\theta}(f_i,f_j) - K_{\theta}(f_i^{N_n},f_j^{N_n})\right|.
	\end{align*}
	Let $b_n = \max_{i=1}^n \|f_i-f_{i}^{N_n}\|$. Then
	\begin{equation*}
		\|(f_i-f_j) - (f_i^{N_n}-f_j^{N_n}) \|
		\leq \|f_i-f_i^{N_n}\| + \|f_j-f_j^{N_n}\|
		\leq 2 b_n.
	\end{equation*}
	Since $\lim\limits_{n \to \infty} b_n = 0$ from Condition~\ref{cond:approx:inputs}, from Condition~\ref{cond:derivative:t}, as well as the inequality in \eqref{ineq:poly_decrease}, for $n$ large enough
	\begin{align*}
		\sup_{\theta \in \Theta}\lambda_{\max}\left(R_\theta-R_{N_n, \theta}\right)
		&\ \triangleleft \ b_n\max_{i=1, \ldots, n} \sum_{j=1}^{n} \frac{1}{1+\min(\|f_i-f_j\|, \|f_i^{N_n}-f_j^{N_n}\|)^{\gamma_0}}
		\\
		&\ \triangleleft \ b_n \max_{i=1, \ldots, n}  \sum_{j=1}^{n}  \frac{1}{1+\|f_i-f_j\|^{\gamma_0}}.
	\end{align*}
	We recall that $\|f_{j,d_0} - f_{i,d_0}\|\geq \Delta$ and $\frac{1}{1+|f_i-f_j\|^{\gamma_0}} \triangleleft \frac{1}{1+\|f_{j,d_0} - f_{i,d_0}\|^{\gamma_0}}$. Therefore, applying Lemma~\ref{lemma:A}, we get
	\begin{align*}
		\sup_{\theta \in \Theta}\lambda_{\max}\left(R_\theta-R_{N_n, \theta}\right)
		&\ \triangleleft \ b_n \max_{i=1, \ldots, n}  \sum_{j=1}^{n}  \frac{1}{1+\|f_i-f_j\|^{\gamma_0}}
		\\
		&\ \triangleleft \ \frac{b_n }{\Delta^{d_0}} \sum_{k=1}^{\infty} \frac{k^{d_0-1}}{1+(k-1)^{\gamma_0}}.
	\end{align*}
	Since $\lim\limits_{n \to \infty} b_n = 0$, then the first part of the lemma is proven. The second part can be established using similar arguments.
\end{proof}

\begin{proof}[Proof of Lemma~\ref{lemma:convConMat}]
Let $i,j \in \{1, \ldots p\}$ and $n \in \mathbb{N}^*$. Remark that all the following matrices are symmetric.
\begin{align*}
    A &:= R_{\theta_0}^{-1}, 
      &B &:= \frac{\partial R}{\partial \theta_i}(\theta_0),
     &C &:= \frac{\partial R}{\partial \theta_j}(\theta_0),
     \\
    A_n &:= R_{N_n,\theta_0}^{-1},
     &B_n &:= \frac{\partial R_{N_n}}{\partial \theta_i}(\theta_0),
     &C_n &:= \frac{\partial R_{N_n}}{\partial \theta_j}(\theta_0).     
\end{align*}

Using the Cauchy-Schwarz inequality on the space of square matrices endowed with the norm $\|\cdot \|_2$, where we recall that $\|R\|_2^2 = \frac{1}{n}\sum_{i,j=1}^nR_{i,j}^2$, we obtain
\begin{align*}
    \left|(\asympConMatapproxNn - \asympConMat)_{i,j}\right|  = \ & \frac{1}{2n}\bigg|\tr\bigg(R_{N_n,\theta_0}^{-1} \frac{\partial R_{N_n}}{\partial \theta_i}(\theta_0) R_{N_n,\theta_0}^{-1} \frac{\partial R_{N_n}}{\partial \theta_j}(\theta_0)\bigg)\\
    &\hspace{4ex} - \tr\bigg(R_{\theta_0}^{-1} \frac{\partial R}{\partial \theta_i}(\theta_0) R_{\theta_0}^{-1} \frac{\partial R}{\partial \theta_j}((\theta_0)\bigg)\bigg|\\
     =  \ &\frac{1}{2n}\bigg|\tr\left(A_nB_nA_nC_n\right) - \tr\left(ABAC\right)\bigg|\\
     =  \ &\frac{1}{2n}\bigg|\tr\bigg((A_n -A)B_nA_nC_n + A(B_n-B)A_nC_n\\
    &\hspace{9ex} +AB(A_n-A)C_n + ABA(C_n-C)\bigg)\bigg|
    \\
     \le  \ &\|A_n -A\|_2\|B_nA_nC_n\|_2 + \|B_n -B\|_2\|A_nC_nA\|_2\\
    &+ \|A_n -A\|_2\|C_nAB\|_2 + \|C_n -C\|_2\|ABA\|_2.
\end{align*}
 However, we have
\[
\|A_n -A\|_2 = \|R_{N_n,\theta}^{-1} - R_{\theta}^{-1}\|_2 \xrightarrow[n \to \infty]{} 0,
\]
as well as
\[
\|B_n -B\|_2\xrightarrow[n \to \infty]{} 0 \quad \textrm{and} \quad \|C_n -C\|_2\xrightarrow[n \to \infty]{} 0.
\]
Moreover the quantities $\|B_nA_nC_n\|_2$, $\|A_nC_nA\|_2$, $\|C_nAB\|_2$ and $\|ABA\|_2$ are bounded in $n$. Thus
\[
\asympConMatapproxNn - \asympConMat \xrightarrow[n \to \infty]{} 0.
\]
\end{proof}
\newpage
\bibliographystyle{plain}
\bibliography{references}       

\begin{thebibliography}{10}

\bibitem{Bachoc2014AsymptAnalGPs}
Fran\c{c}ois Bachoc.
\newblock Asymptotic analysis of the role of spatial sampling for covariance
  parameter estimation of {G}aussian processes.
\newblock {\em Journal of Multivariate Analysis}, 125:1--35, 2014.

\bibitem{Bachoc2023GPDist_OptTransport}
Fran\c{c}ois Bachoc, Louis B\'ethune, Alberto Gonzalez-Sanz, and Jean-Michel
  Loubes.
\newblock Gaussian processes on distributions based on regularized optimal
  transport.
\newblock In {\em International Conference on Artificial Intelligence and
  Statistics}, volume 206 of {\em Proceedings of Machine Learning Research},
  pages 4986--5010, 25--27 Apr 2023.

\bibitem{Bachoc2018_GPdistInputs}
Fran\c{c}ois Bachoc, Fabrice Gamboa, Jean-Michel Loubes, and Nil Venet.
\newblock A {G}aussian process regression model for distribution inputs.
\newblock {\em IEEE Transactions on Information Theory}, 64(10):6620--6637,
  2018.

\bibitem{Bachoc2021AsymptAnalMLEGPs}
Fran{\c{c}}ois Bachoc.
\newblock {\em Asymptotic analysis of maximum likelihood estimation of
  covariance parameters for Gaussian processes: An introduction with proofs},
  pages 283--303.
\newblock Springer International Publishing, Cham, 2021.

\bibitem{Bachoc2020GaussFieldSymmGroup}
Fran{\c{c}}ois Bachoc, Baptiste Broto, Fabrice Gamboa, and Jean-Michel Loubes.
\newblock {Gaussian field on the symmetric group: Prediction and learning}.
\newblock {\em Electronic Journal of Statistics}, 14(1):503--546, 2020.

\bibitem{Bachoc2020GPs_multidimDist}
Fran{\c{c}}ois Bachoc, Alexandra Suvorikova, David Ginsbourger, Jean-Michel
  Loubes, and Vladimir Spokoiny.
\newblock {Gaussian processes with multidimensional distribution inputs via
  optimal transport and Hilbertian embedding}.
\newblock {\em Electronic Journal of Statistics}, 14(2):2742--2772, 2020.

\bibitem{Bachoc2013_GPEstimators}
François Bachoc.
\newblock Cross validation and maximum likelihood estimations of
  hyper-parameters of gaussian processes with model misspecification.
\newblock {\em Computational Statistics \& Data Analysis}, 66:55--69, 2013.

\bibitem{Bachoc2017GPCompExp}
{Bachoc, François}, {Contal, Emile}, {Maatouk, Hassan}, and {Rullière,
  Didier}.
\newblock Gaussian processes for computer experiments.
\newblock {\em ESAIM: Procs}, 60:163--179, 2017.

\bibitem{Betancourt2020fGPs}
Jos\'e Betancourt, Fran\c{c}ois Bachoc, Thierry Klein, D\'eborah Idier, Rodrigo
  Pedreros, and J\'er\'emy Rohmer.
\newblock Gaussian process metamodeling of functional-input code for coastal
  flood hazard assessment.
\newblock {\em Reliability Engineering \& System Safety}, 198:106870, 2020.

\bibitem{Binois2022SurveyGPsBO}
Micka\"{e}l Binois and Nathan Wycoff.
\newblock A survey on high-dimensional gaussian process modeling with
  application to bayesian optimization.
\newblock {\em ACM Transactions on Evolutionary Learning and Optimization},
  2(2), aug 2022.

\bibitem{Borovitskiy2021GPGraphs}
Viacheslav Borovitskiy, Iskander Azangulov, Alexander Terenin, Peter Mostowsky,
  Marc Deisenroth, and Nicolas Durrande.
\newblock Mat{é}rn gaussian processes on graphs.
\newblock In {\em International Conference on Artificial Intelligence and
  Statistics}, volume 130 of {\em Proceedings of Machine Learning Research},
  pages 2593--2601, 13--15 Apr 2021.

\bibitem{Camps2016SurveyGPsDataAnalysis}
Gustau Camps-Valls, Jochem Verrelst, Jordi Munoz-Mari, Valero Laparra, Fernando
  Mateo-Jimenez, and Jose Gomez-Dans.
\newblock A survey on gaussian processes for earth-observation data analysis: A
  comprehensive investigation.
\newblock {\em IEEE Geoscience and Remote Sensing Magazine}, 4(2):58--78, 2016.

\bibitem{Forrester2008SurrogatesEng}
A.~Forrester, A.~S{\'o}bester, and A.~Keane.
\newblock {\em Engineering Design via Surrogate Modelling: A Practical Guide}.
\newblock Wiley, 2008.

\bibitem{furrer2023asymptotic}
Reinhard Furrer and Michael Hediger.
\newblock Asymptotic analysis of ml-covariance parameter estimators based on
  covariance approximations.
\newblock {\em Electronic Journal of Statistics}, 17(2):3050--3102, 2023.

\bibitem{Garrido2020GP_catintInputs}
Eduardo~C. Garrido-Merchán and Daniel Hernández-Lobato.
\newblock Dealing with categorical and integer-valued variables in bayesian
  optimization with gaussian processes.
\newblock {\em Neurocomputing}, 380:20--35, 2020.

\bibitem{Genton2001Kernels}
Marc~G. Genton.
\newblock Classes of kernels for machine learning: A statistics perspective.
\newblock {\em Journal of Machine Learning Research}, 2:299--312, 2001.

\bibitem{Hall1968ErrorBoundsSplineInterpolation}
C.A Hall.
\newblock On error bounds for spline interpolation.
\newblock {\em Journal of Approximation Theory}, 1(2):209--218, 1968.

\bibitem{Hemachandran2022bayesianGPML}
Hemachandran K, S.~Tayal, P.M. George, P.~Singla, and U.~Kose.
\newblock {\em Bayesian Reasoning and Gaussian Processes for Machine Learning
  Applications}.
\newblock CRC Press, 2022.

\bibitem{ShaKau2013ParamEstim}
{C. G.} Kaufman and {B. A.} Shaby.
\newblock The role of the range parameter for estimation and prediction in
  geostatistics.
\newblock {\em Biometrika}, 100:473--484, 2013.

\bibitem{Li2018GersgorinTheo}
Chi-Kwongn Li and Fuzhen Zhang.
\newblock Eigenvalue continuity and gersgorin's theorem.
\newblock {\em Electronic Journal of Linear Algebra}, 35:619--625, 2019.

\bibitem{Lopezlopera2021multioutputGP}
Andr\'es~F. L\'opez-Lopera, D.~Idier, J.~Rohmer, and F.~Bachoc.
\newblock Multioutput {G}aussian processes with functional data: A study on
  coastal flood hazard assessment.
\newblock {\em Reliability Engineering \& System Safety}, 218, 2022.

\bibitem{Massart1990DKWIneq}
P.~Massart.
\newblock {The tight constant in the Dvoretzky-Kiefer-Wolfowitz inequality}.
\newblock {\em The Annals of Probability}, 18(3):1269--1283, 1990.

\bibitem{Muehlenstaedt2017CompExpFunInputs}
T.~Muehlenstaedt, J.~Fruth, and O.~Roustant.
\newblock Computer experiments with functional inputs and scalar outputs by a
  norm-based approach.
\newblock {\em Statistics and Computing}, 27(2):1083--1097, 2017.

\bibitem{Nanty2016SimuFunStochInputs}
Simon Nanty, C\'{e}line Helbert, Amandine Marrel, Nadia P\'{e}rot, and
  Cl\'{e}mentine Prieur.
\newblock Sampling, metamodeling, and sensitivity analysis of numerical
  simulators with functional stochastic inputs.
\newblock {\em SIAM/ASA Journal on Uncertainty Quantification}, 4(1):636--659,
  2016.

\bibitem{Paciorek2004NonStCov}
Christopher~J. Paciorek and Mark~J. Schervish.
\newblock Nonstationary covariance functions for {G}aussian process regression.
\newblock In {\em Advances in Neural Information Processing Systems},
  volume~16, pages 273--280. 2004.

\bibitem{Ramsay2005functional}
James Ramsay and Bernard~W. Silverman.
\newblock {\em Functional Data Analysis}.
\newblock Springer Series in Statistics. Springer, 2005.

\bibitem{Rasmussen2005GP}
Carl~E. Rasmussen and Christopher K.~I. Williams.
\newblock {\em Gaussian Processes for Machine Learning (Adaptive Computation
  and Machine Learning)}.
\newblock The MIT Press, Cambridge, MA, 2005.

\bibitem{Rosen1971MinimumErrorBoundsSplines}
J.B. Rosen.
\newblock Minimum error bounds for multidimensional spline approximation.
\newblock {\em Journal of Computer and System Sciences}, 5(4):430--452, 1971.

\bibitem{Roustant2020_GPsCatInputs}
Olivier Roustant, Esp\'{e}ran Padonou, Yves Deville, Alo\"{\i}s Cl\'{e}ment,
  Guillaume Perrin, Jean Giorla, and Henry Wynn.
\newblock Group kernels for gaussian process metamodels with categorical
  inputs.
\newblock {\em SIAM/ASA Journal on Uncertainty Quantification}, 8(2):775--806,
  2020.

\bibitem{Schatzman2002numerical}
M.~Schatzman.
\newblock {\em Numerical Analysis: A Mathematical Introduction}.
\newblock Clarendon Press, 2002.

\bibitem{stein1999interpolation}
Michael~L. Stein.
\newblock {\em Interpolation of Spatial Data: Some Theory for Kriging}.
\newblock Springer Series in Statistics. Springer, 1999.

\bibitem{Szabo2016DistRegression}
Zolt{{\'a}}n Szab{{\'o}}, Bharath~K. Sriperumbudur, Barnab{{\'a}}s
  P{{\'o}}czos, and Arthur Gretton.
\newblock Learning theory for distribution regression.
\newblock {\em Journal of Machine Learning Research}, 17(152):1--40, 2016.

\bibitem{2020SciPy-NMeth}
Pauli Virtanen, Ralf Gommers, Travis~E. Oliphant, Matt Haberland, Tyler Reddy,
  David Cournapeau, Evgeni Burovski, Pearu Peterson, Warren Weckesser, Jonathan
  Bright, St{\'e}fan~J. {van der Walt}, Matthew Brett, Joshua Wilson, K.~Jarrod
  Millman, Nikolay Mayorov, Andrew R.~J. Nelson, Eric Jones, Robert Kern, Eric
  Larson, C~J Carey, {\.I}lhan Polat, Yu~Feng, Eric~W. Moore, Jake
  {VanderPlas}, Denis Laxalde, Josef Perktold, Robert Cimrman, Ian Henriksen,
  E.~A. Quintero, Charles~R. Harris, Anne~M. Archibald, Ant{\^o}nio~H. Ribeiro,
  Fabian Pedregosa, Paul {van Mulbregt}, and {SciPy {1.0} Contributors}.
\newblock {{SciPy} 1.0: Fundamental Algorithms for Scientific Computing in
  Python}.
\newblock {\em Nature Methods}, 17:261--272, 2020.

\bibitem{Waskom2021Seaborn}
Michael~L. Waskom.
\newblock Seaborn: Statistical data visualization.
\newblock {\em Journal of Open Source Software}, 6(60):3021, 2021.

\bibitem{Zhang04Inconsistent}
H.~Zhang.
\newblock Inconsistent estimation and asymptotically equivalent interpolations
  in model-based geostatistics.
\newblock {\em Journal of the American Statistical Association}, 99:250--261,
  2004.

\bibitem{Zhi2023GPGraph_SpectKernel}
Yin-Cong Zhi, Yin~Cheng Ng, and Xiaowen Dong.
\newblock Gaussian processes on graphs via spectral kernel learning.
\newblock {\em IEEE Transactions on Signal and Information Processing over
  Networks}, 9:304--314, 2023.

\end{thebibliography}

\end{document}